\def\theglossary{\@restonecoltrue\if@twocolumn\@restonecolfalse\fi
\columnseprule\z@ \columnsep 35\p@
\let\@makessectionhead\indexsec
\@xp\section\@xp*\@xp{\glossaryname}%
\let\item\@idxitem
\parindent\z@  \parskip\z@\@plus.3\p@\relax
\footnotesize}
\def\glossaryname{Notation Index}
\numberwithin{equation}{section}
\renewcommand\dots{\relax\ifmmode\ldots\else$\,\ldots\,$\fi}
\newcommand\note[1]%
\def\today{\number\day\space\ifcase\month\or January\or February\or
March\or April\or May\or June\or July\or August\or September\or
October\or November\or December\fi\space\number\year}
\def\hour{\ifnum\count253<10
0\number\count253\else\number\count253\fi}
\def\minute{\ifnum\count254<10
0\number\count254\else\number\count254\fi}
\newtheorem{theorem}[equation]{Theorem}
\newtheorem{proposition}[equation]{Proposition}
\newtheorem{corollary}[equation]{Corollary}
\theoremstyle{definition}
\newcommand\gk{\mathfrak{k}} 
\newcommand\gt{\mathfrak{t}}
\renewcommand\gg{\mathfrak{g}}
\newcommand\gh{\mathfrak{h}} 
\newcommand\gn{\mathfrak{n}} 
\newcommand\gu{\mathfrak{u}} 
\newcommand\gs{\mathfrak{s}}
\newcommand\go{\mathfrak{o}} 
\newcommand\gp{\mathfrak{p}} 
\newcommand\gq{\mathfrak{q}} 
\newcommand\gb{\mathfrak{b}}
\newcommand\gl{\mathfrak{l}} 
\newcommand\bb[1]{{\text{\bf#1}}}
\newcommand\bbz{\bb{Z}} 
\newcommand\bbr{\bb{R}} 
\newcommand\bbc{\bb{C}}
\newcommand\bbp{\bb{P}}
\newcommand\bbi{\bb{I}}
\newcommand\bbt{\bb{T}}
\newcommand{\cP}{\mathcal P}
\newcommand{\CC}{{\mathbb C}}
\newcommand{\RR}{{\mathbb R}}
\newcommand{\ZZ}{{\mathbb Z}}
\renewcommand{\a}{\alpha}
\renewcommand{\b}{\beta}
\newcommand\ca{\mathcal}
\newcommand\func[1]{\operatorname{\mathrm{#1}}}
\newcommand\funclim[1]{\operatorname*{\mathrm{#1}}}
\newcommand\ad{\func{ad}}
\renewcommand\dim{\func{dim}}
\renewcommand\lim{\funclim{lim}}
\renewcommand\max{\func{max}}
\newcommand\tr{\func{tr}}
\renewcommand{\a}{\alpha}
\newcommand{\cR}{\mathcal R}
\DeclareMathOperator{\aff}{aff}
\begin{document} 
\title{Loop groups and holomorphic bundles}
\author{Jacques Hurtubise}
\address{Department of Mathematics and Statistics, McGill University\\
           805 Sherbrooke St. W, Montreal, Quebec, H3A 2K6, Canada\\}
\email{jacques.hurtubise@mcgill.ca}
\author{Michael Murray}
\address{School of Mathematics and Statistics, University of Adelaide\\
           Adelaide, Australia\\}
\email{michael.murray@adelaide.edu.au}
\thanks{The first author acknowledges the support of the Natural Sciences and Engineering Research Council of Canada. The second author acknowledges the support of the Australian  
Research Council and thanks Mike Eastwood for useful conversations.}

\begin{abstract} This paper considers the links between the geometry of the various flag manifolds of loop groups and bundles over families of rational curves. Aa an application, a stability result for the moduli on a rational ruled surface of $G$-bundles    with additional flag structure along a line  is proven for any  reductive group; this gives the corresponding stability statement for any compact group $K$ for the moduli of  $K$-instantons over the four-sphere, and for the moduli of $K$-calorons over the three-sphere times the circle. The paper also considers the Hecke transforms on bundles induced by outer automorphisms of the loop groups.

\end{abstract}
\maketitle

\tableofcontents

\section{Introduction}

A beautiful correspondence brought to light by Atiyah \cite{Atiyah} ties together the geometry of bundles on families of rational curves, parametrised by a variety $V$,  to maps of $V$ into a certain flag manifold of a loop group. The correspondence is quite simple: one associates to a bundle a suitable transition function which is defined over a circle. This correspondence has been exploited repeatedly, for example in describing moduli of instantons: \cite{Kirwan-AJ,BHMM2,Gr,HM-ruled,Tian1,Tian2}.

This article revisits this correspondence, with an eye to the full range of flag manifolds of the loop group. Many of the features brought to light in the more restricted case considered above also hold here: for example, when the variety $V$ is a curve, the spaces of maps admit a ``poles and principal parts" description. 
This allows a proof of topological stability, that is, that the homotopy groups stabilise as one takes the degrees of the maps to infinity. We treat here the case of $V= \bbp^1$. 

While these spaces of maps are of interest in their own right, they are also moduli spaces: for a suitably chosen maximal parabolic, one has moduli spaces of instantons on $S^4$, and this correspondence was the one exploited in \cite{Kirwan-AJ,BHMM2,Tian1,Tian2} to give the proof of the Atiyah-Jones conjecture on topological stability. The more general parabolics correspond to moduli of calorons (instantons with monopole-like asymptotics on $\bbr^3\times S^1$). Below, we get topological stability theorems for all instanton moduli space on $S^4$, previous work having just considered classical groups, as well as for the more general caloron moduli.

Another aspect that we consider is the role that outer automorphisms play in this picture; we find that these automorphisms correspond to Hecke transforms. They will define isomorphisms, some of them not obvious, between various instanton, caloron and monopole moduli.

\section{Parabolic subgroups of loop groups}

Let $K$ be a compact semi-simple Lie group with complexification $G$. Choose a maximal torus $T \subset K$.  Denote the Lie algebras of $K$, $G$ and $T$ by 
$\gk$, $\gg$ and $\gt$. Let $\gh = \gt^c$ which is a Cartan subalgebra of $\gg$. Denote by $\cR(\gg) \subset \gh^* $ the set of roots and choose a  base of simple roots  $\triangle(\gg)$. As usual define  a root to be positive if it is a positive linear combination of simple roots.   If $\a$ is a root denote  the corresponding root space by $\gg_\a$. 
The Borel subalgebra $\gb$ is defined by 
$$
\gb =  \gh \oplus_{\a >  0} \gg_\a
$$
and the  opposite Borel is defined by 
$$
\gb^- = \gh  \oplus_{\a < 0} \gg_\a.
$$
Similarly define
$$
\gn =  \oplus_{\a > 0} \gg_\a
$$
and its opposite  by 
$$
\gn^- =  \oplus_{\a <  0} \gg_\a.
$$
Note that we have 
$$
\gg = \gn^- \oplus   \gh   \oplus \gn.
$$
 If $\triangle_0 \subset \triangle(\gg)$ we define a parabolic subalgebra $\gp(\triangle_0)$ as the Lie subalgebra of $\gg$ generated by 
$$
\gb \oplus \bigoplus_{\a \in \triangle_0} \gg_{-\a}.
$$
This  in fact establishes a bijective correspondence between subalgebras containing $\gb$ and subsets of $\triangle(\gg)$. Notice that
$\gp(\emptyset) = \gb$ and $\gp(\triangle(\gg)) = \gg$. 

To extend this to loop algebras it is useful to take a different, but equivalent approach. Let $\cR_\ZZ(\gg)\subset \gh^*$ be the $\ZZ$-module generated
by the roots. In particular if $\triangle(\gg)$ is a base for $\cR(\gg)$ then  $\cR_{\ZZ}(\gg)$ is a free $\ZZ$-module over 
$\triangle(\gg)$. 
Define a function  $\chi \colon \cR_\ZZ(\gg) \to \ZZ$ by assigning a number which is one or zero to each positive simple root and 
extending it to be $\ZZ$-linear.  For any $\a \in \cR_\ZZ({\gg})$ let us define $\gg_\a$ to be the corresponding weight space. So 
if $\a = 0$ then $\gg_0 = \gh$, if $\a \in \cR(\gg)$ then $\gg_\a$ is the corresponding root space and otherwise $\gg_\a$ is zero. 
 Noting that $[\gg_\alpha, \gg_\beta]\subset \gg_{\alpha+\beta}$ we see that 
$$
\bigoplus_{\chi(\a) \geq 0}  \gg_\a 
$$
is the  parabolic subalgebra corresponding to 
$$
\triangle_\chi = \{ \a \in \triangle(\gg) \mid \chi(\a) = 0 \}.
$$

Consider the loop algebra 
$$
L\gk = \RR[z, z^{-1}] \otimes_\RR \gk
$$
with the pointwise bracket as well as its complexification $L\gg = \CC[z, z^{-1}] \otimes_\CC\gg$. 
It is  convenient to consider the semi-direct product $\tilde L \gg = L\gg \oplus  \CC d$ whose elements we 
write as $X(z) + x d$. This has  bracket:
$$
[X(z) + x i d, Y(z) +  yid] = [X(z), Y(z)] + x Y'(z) - y X'(z)
$$
where $X'(z)$ is the derivative with respect to $z$. 
We define $\delta \colon \gh \oplus \CC d \to \CC$ by $\delta(X(z) + x d) = x$ and extend any linear function on  $\gh$ to 
$\gh \oplus \CC d$ by making it  zero  on $d$. \  The root space decomposition is
$$
\tilde L \gg  = (\oplus_{(n, \a) \in \ZZ \times \cR(\gg)}  z^n \otimes \gg_\a )\oplus \gh \oplus id\CC.
$$
and the roots of $\tilde L \gg$ are
$$
\cR(\tilde L \gg) = \ZZ \times \cR(\gg) = \{ n\delta + \a \mid \a \in \cR(\gg), n \in \ZZ \} \subset (\gh \oplus \CC d)^*
$$
 Notice that we can write  
 $$
\tilde L\gg = \bigoplus_{n < 0} z^n \gg \oplus \gn^- \oplus \gh \oplus id\CC \oplus \gn \oplus \bigoplus_{n > 0} z^n \gg .
$$
We say that a root $n \delta + \a$ is positive if $n > 0$ or $n=0$ and $\a > 0$. 

If $\triangle(\gg) = \{ \a_1, \dots, \a_r\}$ consider the positive roots
$\a_0 = \delta - \theta, \a_1, \a_2, \dots, \a_r$; here $\theta$ is the highest root of $\gg$. These are simple roots for this definition of positive.  To see this note that
$$
n\delta + \a = n(\delta - \theta) + (\theta + \a) + (n-1)\theta.
$$
If $n> 0$ then this is a non-negative linear combination of the simple roots. If $n = 0$ then $\a > 0$ so it is also a non-negative combination 
of simple roots.  So we choose the base of simple roots
$$
\triangle(\tilde L \gg) = \{ \a_0 = \delta - \theta, \a_1, \a_2, \dots, \a_r \}
$$

The Borel subalgebra formed by taking non-negative root spaces is 
$$
 d\CC \oplus \gb \oplus \bigoplus_{n > 0} z^n \gg 
$$
which has the geometric interpretation as loops $X(z)$ such that $X(0) \in \gb$.

To understand the parabolic subalgebras note that we have  $\triangle(\tilde L \gg) = \{ \delta - \theta\} \cup  \triangle(\gg) $
as  simple roots of the loop algebra. Let $\chi \colon \cR_\ZZ(\tilde L \gg) \to \ZZ$ be a function  defined by associating  one or   zero to each simple root and
extending to be $\ZZ$-linear.  Then as before we can consider 
$$
  \bigoplus_{\chi(\b) \geq  0}  {\tilde L \gg}_{\b} = \gh \oplus \CC d \oplus \bigoplus_{\chi(n\delta + \a) \geq 0} z^n \gg_\a 
$$
which is the parabolic subalgebra defined by 
$$
\triangle_\chi = \{ \b \in \triangle({\tilde L \gg}) \mid \chi(\b)= 0 \}
$$

We need to consider when it is possible for $\chi(n \delta + \a) = n \chi(\delta - \theta) + n \chi(\theta) + \chi(\a) \geq 0$. Notice that
because $\theta$ is the longest root we have $\chi(\theta) \geq \chi(\alpha)$ for all roots $\alpha \in \cR(\gg)$. 

Consider the case that $n \leq -2$ then
\begin{align*}
\chi(n \delta + \a) &\leq -2 \chi(\delta - \theta) - 2 \chi(\theta) + \chi(\a) \\
                    & = -2 \chi(\delta - \theta) - \chi(\theta) + (\chi(\a) - \chi(\theta))
                    \end{align*}
We have $-2\chi(\delta - \theta) \leq  0$ and $\chi(\a) - \chi(\theta) \leq 0$ so the only way this expression can be non-negative
is if $\chi(\theta) = 0$ and hence $\chi(\a) = 0$ for every root $\a$. Thus, if $\chi$ is non zero, $\chi(\delta - \theta) =1$ and so the 
expression is still negative. We conclude that the expression must be negative for $n \leq -2$. 

Next consider $n = -1$ so
\begin{align*}
\chi(n \delta + \a) &=  -\chi(\delta - \theta) +  (\chi(\a) - \chi(\theta)).  
                \end{align*}
It is only possible for this to be non-negative if $\chi(\delta - \theta) = 0$ and $\chi(\a) = \chi(\theta)$.  Define
$$
\gq_\chi = \bigoplus_{\chi(\a) = \chi(\delta)} \gg_{\a}.
$$
Notice that if $\chi(\delta - \theta) = 1 $ then $\gq_\chi = 0$. 

If $n=0$ we have
$$
\chi(n \delta + \a)  = \chi(\a)
$$
and thus we are interested in 
$$
\bigoplus_{\chi(\a) \geq 0} \gg_\a.
$$
If we denote $\chi$ restricted to $\cR_\ZZ(\gg)$ also by $\chi$ then we have a parabolic subalgebra
$$
\gp( \triangle_\chi(\gg)) = \bigoplus_{\chi(\a) \geq 0} \gg_\a.
$$

Finally if $n \geq 1$ then we have 
$$
\chi(n \delta + \a) \geq \chi(\delta + \a) =  \chi(\delta - \theta) + \chi(\theta + \a) \geq 0.
$$

We conclude that $\chi$ determines the parabolic subalgebra
$$
\gp(\triangle_\chi(\tilde L \gg)) = \frac{1}{z}  \gq_\chi  \oplus \gp( \triangle_\chi(\gg))\oplus \gh \oplus \CC d \oplus \bigoplus_{n > 0} z^n \gg.
$$

Notice that there are two distinct cases:

\begin{enumerate}
\item If $\chi(\delta - \theta) = 1$ then this parabolic subalgebra is 
$$
\gp( \triangle_\chi(\gg)) \oplus \gh \oplus \CC d \oplus \bigoplus_{n > 0} z^n \gg.
$$
which has the geometric interpretation as  loops $X(z)$ such that $X(0) \in \gp( \triangle_\chi(\gg))$. We call such parabolics {\em standard} and we call 
$\gp( \triangle_\chi(\gg)) $ the related finite parabolic.
\item  If $\chi(\delta - \theta) = 0$ then $\gq_\chi  \neq 0$. We call such parabolics {\em exotic}.
\end{enumerate}

To understand the relationship between these two types of parabolic subalgebras it is useful to consider the (extended) Dynkin diagram of the 
loop algebra. This is the diagram constructed from the Dynkin diagram of the finite algebra by adding the simple root $\alpha_0 = \delta - \theta$ according
to the usual rules for forming a Dynkin diagram.  Parabolics can be read off from a  Dynkin diagram by using 
the notation of \cite{BasEas}. Each parabolic is determined  by placing a cross through every  node $\alpha_i$ of the 
Dynkin diagram for which $\chi(\alpha_i) =  1$. So for example the Borel subalgebra  has a cross through every node and a maximal parabolic subalgebra has a cross through  only one node. A parabolic is standard if the extended Dynkin diagram has a cross through the node $\alpha_0 = \delta - \theta$ and   the related finite parabolic is the parabolic determined by the corresponding finite Dynkin diagram.  A parabolic is exotic if the node $\alpha_0$ 
does not have a cross through it. 

As in the finite case automorphisms of the extended Dynkin diagram induce outer automorphisms of the loop algebra. These can be chosen so that they fix the Cartan subalgebra and act on the simple roots according to the original automorphism.  They then act on the parabolic subalgebras by permuting the crossings
of the nodes.  Inspection of the extended Dynkin diagrams below shows that the action can sometimes interchange standard and exotic parabolics.

\subsection{An example}

Let $G = SL(2, \CC)$ and likewise $\gg = sl(2, \CC)$.  Conjugation by the matrix
$$
X = \begin{bmatrix} 0 & z^{-1/2} \\   z^{1/2} & 0 \end{bmatrix}
$$
is an outer automorphism of the loop algebra (and group) (called a {\em flip} in \cite{Nye}).  Notice that it acts as
$$
\begin{bmatrix} a & b \\ c & d \end{bmatrix} \mapsto \frac{1}{z} \begin{bmatrix} 0 & c \\ 0 & 0  \end{bmatrix} + \begin{bmatrix} d & 0 \\ 0 & a \end{bmatrix} + 
                                        z  \begin{bmatrix} 0 & 0  \\ b & 0 \end{bmatrix}.
                                        $$
                                        
Let  $\gb_{LSL(2, \CC)}$ be the standard Borel subalgebra and group of the loop algebra and group defined by 
$$
\gb_{LSL(2, \CC)} = \left\{ \begin{bmatrix} a & b \\ 0 & c \end{bmatrix}  + O(z) \right\}
$$
where $O(z^k)$ denotes an arbitary power series in $z$ with lowest order $z^k$ and coefficients in $sl(2, \CC)$. 
If we let 
$$
\begin{bmatrix} a & b \\ 0 & c \end{bmatrix}  + z \begin{bmatrix} d & e  \\ f & g \end{bmatrix} + O(z^2) 
$$ 
be an arbitrary element of $\gb_{LSL(2, \CC)}$ and apply the outer automorphism it goes to 
$$
\begin{bmatrix} c & 0 \\ 0 & a  \end{bmatrix} + z \begin{bmatrix} 0 & 0  \\ b & 0 \end{bmatrix} + 
\begin{bmatrix} 0 &  f \\ 0  & 0  \end{bmatrix} + z \begin{bmatrix} g & 0  \\ 0 & d \end{bmatrix} + z\begin{bmatrix} 0 & * \\ 0 & 0 \end{bmatrix} + 
O(z^2)
$$
or 
$$
\begin{bmatrix} c & f  \\ 0 & a  \end{bmatrix} + O(z).
$$
In other words $B_{LSL(2, \CC)}$ is fixed by the outer automorphism.  

Consider the parabolic 
$$
\cP_1 = \{ g(z) \in LSL(2, \CC) \mid g(0) \ \text{exists}\ \}
$$
which contains $\gb_{LSL(2, \CC)}$. 
If we apply the outer automorphism we must get another parabolic containing $\gb_{LSL(2, \CC)}$. Let an arbitary element in $\cP_1$ be
$$
\begin{bmatrix} a & b \\ c & d \end{bmatrix}  + z \begin{bmatrix} e & f \\ g & h \end{bmatrix} +O(z^2) 
$$ 
and as above apply the outer automorphism to get an element of the form
$$
\frac{1}{z} \begin{bmatrix} 0 & c  \\ 0 & 0 \end{bmatrix}  + \begin{bmatrix}d & g \\ 0 & a \end{bmatrix}+O(z) .
$$
This is an arbitrary element in the parabolic
$$
\cP_2 = \left\{ \frac{1}{z} \begin{bmatrix} 0 & c \\ 0 & 0 \end{bmatrix}  + \begin{bmatrix}d & g \\ 0 & a \end{bmatrix} + O(z) \right\}
$$
described in the previous section.

\subsection{An alternative way to approach the example}

A more symmetric  way of understanding what is going on is to represent the loop algebra or group in terms of infinite dimensional matrices. For convenience we consider polynomial loops. 
Let $L\CC^2 = \CC[z^{-1}, z] \otimes \CC^2$. Define an infinite basis of this by
$$
e_{2i} = \begin{bmatrix}1 \\ 0 \end{bmatrix} z^i  \qquad\mathrm{and}\qquad e_{2i+1} = \begin{bmatrix}0 \\ 1 \end{bmatrix}z^i
$$
for $i \in \ZZ$. An element of the loop algebra of the form 
$$
M_j = \begin{bmatrix} a & b \\ c & d \\ \end{bmatrix} z^j
$$
acts by 
\begin{align*}
e_{2i} & \mapsto a e_{2i+2j} +    b e_{2i+2j+1} \\
e_{2i+1} & \mapsto c e_{2i+2l} + d e_{2i+2j+1} 
\end{align*}
and hence by the infinite  matrix $M_{ik}$ whose only non-zero entries are
$$
\left[ \begin{array}{cccc}
M_{2i, 2i+2j} &  M_{2i, 2i+2j+1 } \\
M_{2i+1, 2i+2j} &  M_{2i+1, 2i+2j+1} \\
\end{array} \right]
= 
\left[ \begin{array}{cccc}
a & b  \\
c & d  \\
\end{array} \right]
$$
for all $i \in \ZZ$.  This matrix shares with the general element of the loop algebra and indeed the group  the property that 
$M_{i,k} = M_{i+2, k+2}$. We  have a non-trivial outer automorphism of $L\gs\gl(2,\bbc)$ which corresponds to 
$M_{i, k} \mapsto M_{i+1, k+1}$. This is the flip described in the previous section.

The Borel subalgebra defined above maps to the upper triangular  matrices. The standard parabolic
and the exotic parabolic map to the block upper triangular matrices, periodic with two by two blocks. Flipping interchanges the standard and the exotic   parabolic and corresponds to moving everying along the 
diagonal by one entry. 

More generally consider the case of polynomial loops in the general linear algebra $\gg\gl(n,\bbc)$.  As above
we can think of the matrix representation of such  a loop as an infinite dimensional matrix $M= (M_{k,l})_{k, l \in \ZZ}$
indexed by $k, l \in \ZZ$. For such a matrix $M$ define $M_{[i,j]}$ to be the $n \times n$ matrix 
which is the $(i, j)$th block
$$
M_{[ij]} = \left[\begin{array}{cccc}M_{ni+1, nj+1 } & M_{ni+1, nj+2 } & \cdots & M_{ni+1, nj + n } \\
M_{ni+2, nj+1} & M_{ni+2, nj+2 } & \cdots & M_{ni+2, nj+n } \\
\vdots & \vdots & \ddots & \vdots \\
M_{ni+n, nj+1} & M_{ni+n, nj+2 } & \cdots & M_{ni+n, nj+n }\end{array}\right].
$$
As in the $2 \times 2$ case the loop algebra element $\sum_{j} M_j z^j$ is represented  by the infinite matrix $M$ with 
$M_{[i,j]} = M_{i-j}$ for all $i, j \in \ZZ$.  These matrices $M$ are obviously invariant
under $M_{i, j} \mapsto M_{i+n, j+n}$.  Notice that if we take loops in a Lie subalgebra $L \subset \gg\gl(n, \CC)$ then 
for every $i, j \in \ZZ$ we have $M_{[i, j]} \in L$.

For the particular   classical algebras and groups, one has:

\subsection{The special linear algebra $\gs\gl(n,\bbc)$}.

\begin{figure}
\setlength{\unitlength}{.75 mm}
\begin{picture}(100, 30)
\put(10,10){\circle{2}}
\put(8,5){$\alpha_1$}
\put(11,10){\line(1,0){13}}
\put(25,10){\circle{2}}
\put(23,5){$\alpha_2$}
\put(26,10){\line(1,0){13}}
\put(42,10){\circle*{1}}
\put(45,10){\circle*{1}}
\put(48,10){\circle*{1}}
\put(51,10){\line(1,0){13}}
\put(65,10){\circle{2}}
\put(63,5){$\alpha_{\ell -1}$}
\put(66,10){\line(1,0){13}}
\put(80,10){\circle{2}}
\put(78,5){$\alpha_{\ell}$}
\put(11,10.5){\line(3, 1){33}}
\put(79,10.5){\line(-3, 1){33}}
\put(45,21.5){\circle{2}}
\put(43, 25){$\alpha_0$}
\end{picture}
\caption{The Dynkin diagram of the loop algebra of $\gs\gl(n,\bbc)$}
\label{fig:ddal1}
\end{figure}

From the infinite dimensional matrix point of view, the loop algebra consists of matrices periodic under the shift of indices $(i,j)\mapsto (i+n, j+n)$; simple root spaces for the $i$-th root correspond to above diagonal matrix entries $M_{kn+i, kn+i+1}$; the opposite root spaces to the entries below the diagonal $M_{kn+i+1, kn+i}$.
The parabolics, both standard and exotic, correspond to block upper triangular matrices, where again the pattern of blocks along the diagonal is $n$-periodic; the standard parabolics correspond to the case for which the entry $(1,0)$ does not belong to the blocks.

There is an outer automorphism, this time of order $n$, given by conjugation by:

\begin{equation}\label{outer-sl}
A(z) = \begin{bmatrix} 0&0&0&\dots&0&z^{\frac{-n+1}{n}}\\
z^{\frac{1}{n}}&0&0&\dots&0&0\\
0&z^{\frac{1}{n}}&0&\dots&0&0\\
\dots&\dots&\dots&&\dots&\dots\\
0&0&0&\dots&z^{\frac{1}{n}}&0
\end{bmatrix}.
\end{equation}
It permutes the simple roots for $\gs\gl(n,\CC)$ and the extra root $\delta-\theta$ cyclically. 
In the infinite matrix picture, the automorphism corresponds to the shift of indices $(i,j)\mapsto(i+1,j+1)$, and so permutes the parabolic subalgebras. In particular, any parabolic can be mapped to one of the standard ones. This  automorphism rotates  the Dynkin diagram in Figure \ref{fig:ddal1}. 

Notice that there is a reflection automorphism of the Dynkin diagram in Figure \ref{fig:ddal1} which is not a composition of 
rotations. This leaves $\a_0$ fixed and swaps $\a_1 \mapsto \a_l$, $\a_2 \mapsto \a_{l-1}$, \dots and is induced by the 
corresponding automorphism of the Dynkin diagram of $\gs\gl(n,\CC)$.

\subsection{Orthogonal algebras in even dimension}

Let us define our pairing on $\CC^{2n}$ by $\langle e,f\rangle = \sum_{i=1}^{2n} e_if_{2n+1-i}$. Matrices in the Lie algebra of the orthogonal group then satisfy the symmetry relation:
\begin{equation}
M_{i,j} = - M_{2n+1-j, 2n+1-i},\label{orthosymmetry}
\end{equation}
 and the Borel subalgebra is the subalgebra of upper triangular matrices. Elements of the loop algebra, represented as infinite matrices, then have an invariance under an infinite dihedral group, acting along the diagonal, generated by the extension of the symmetry in the matrices for $\gs\go(2n,\bbc)$ and the translational symmetry
$M_{i,j} = M_{i+2n,j+2n}$. The parabolic subalgebras are the given by subspaces of block upper triangular matrices, where the pattern of blocks is $2n$-periodic along the diagonal, and addition is invariant under the reflection \ref{orthosymmetry}.

There is an outer automorphism, given by conjugation by 
\begin{equation}
\begin{bmatrix}
0&z^{\frac{1}{2}}\bbi\\ z^{\frac{-1}{2}}\bbi&0
\end{bmatrix}.\label {slip}
\end{equation}
Its effect on the infinite matrix is the translation $(i,j)\mapsto (i+n,j+n)$. This acts on the  Dynkin diagram in Figure \ref{fig:dddl1} by sending
the simple root $\alpha_i$ to $\alpha_{n-i}$. 

There is in addition another outer automorphism, inherited from the finite dimensional algebra, where it acts by conjugation by 
\begin{equation}
\label{eq:perm}
\left[\begin{array}{ccccc|ccccc}
1&0&\dots &0&0&0&0&\dots &0&0\\
0&1&\dots &0&0&0&0&\dots &0&0\\
\vdots &\vdots &\ddots &\vdots&\vdots&\vdots&\vdots&\ddots &\vdots&\vdots\\
0&0&\dots &1&0&0&0&\dots &0&0\\
0&0&\dots &0&0&1&0&\dots &0&0\\
\hline
0&0&\dots &0&1&0&0&\dots &0&0\\
0&0&\dots &0&0&0&1&\dots &0&0\\
\vdots&\vdots&\ddots &\vdots&\vdots&\vdots&\vdots&\ddots &\vdots&\vdots\\
0&0&\dots &0&0&0&0&\dots &1&0\\
0&0&\dots &0&0&0&0&\dots &0&1\\
\end{array}\right]\end{equation}
where each of the blocks is $n \times n$. 
\begin{figure}
\setlength{\unitlength}{.75 mm}
\begin{picture}(100,25)
\put(25,15){\circle{2}}
\put(23,10){$\alpha_2$}
\put(26,15){\line(1,0){13}}
\put(42,15){\circle*{1}}
\put(45,15){\circle*{1}}
\put(48,15){\circle*{1}}
\put(51,15){\line(1,0){13}}
\put(65,15){\circle{2}}
\put(63,10){$\alpha_{\ell - 2}$}
\put(66,14.5){\line(2,-1){13}}
\put(80,8){\circle{2}}
\put(83,21){$\alpha_{n-1}$}
\put(66,15.5){\line(2,1){13}}
\put(80,22){\circle{2}}
\put(83,7){$\alpha_n$}
\put(24,14.5){\line(-2,-1){13}}
\put(10, 22){\circle{2}}
\put(2, 22){$\alpha_{0}$}
\put(24,15.5){\line(-2,1){13}}
\put(10, 8){\circle{2}}
\put(2,6){$\alpha_1$}
\end{picture}
\caption{The Dynkin diagram of the loop algebra of $\gs\go(2n, \CC)$}
\label{fig:dddl1}
\end{figure}

On the infinite matrix, it permutes the entries in certain pairs of adjacent rows, switching, for all $k,i$, $M_{(2k+1)n, j}$ and $M_{(2k+1)n +1, j}$, then permutes the entries in corresponding pairs of adjacent columns, switching, for all $k,i$ $M_{i,(2k+1)n }$ and $M_{i, (2k+1)n +1},$. This is of course represented by conjugation by an appropriate infinite periodic permutation matrix. This fixes all simple roots except $\alpha_{n-1}$ and $\alpha_n$ which it 
swaps.   Combining this with the previous automorphism we can construct an automorphism which leaves the extended Dynkin diagram 
fixed except for swapping $\a_0$ and $\a_1$. This corresponds to conjugation by the infinite periodic
permuation matrix arising from \eqref{eq:perm} shifted by $n$. Together these generate the whole automorphism group which is $D_4$.

Notice that the automorphisms fix the subset of simple roots $\{\a_0, \a_1, \a_{n-1}, \a_n \}$ and act transitively on it. It follows that the only  parabolics $\gp(\triangle_\chi)$ which can be mapped by an automorphism to a standard parabolic (i.e $\a_0 \notin \triangle_\chi$)
are precisely those for which  $\triangle_\chi$ does not contain the full set $\{\a_0, \a_1, \a_{n-1}, \a_n \}$.

\subsection{Orthogonal algebras in odd dimension}
Let us again define our pairing on $\CC^{2n+1}$ by $\langle e,f\rangle = \sum_{i=1}^{2n} e_if_{2n+2-i}$. Matrices in the Lie algebra of the orthogonal group then again satisfy the symmetry relation
$M_{i,j} = - M_{2n+2-j, 2n+2-i}$, and the Borel subalgebra is the subalgebra of upper triangular matrices.

Going to the loop algebra, and again representing the elements as infinite matrices, we have that the matrices are in a similar way to even dimensions invariant under the action of an infinite dihedral group acting along the diagonal. Again, the parabolics are given by block upper triangular matrices, where the pattern of blocks is invariant under the action of the dihedral group.

\begin{figure}
\setlength{\unitlength}{.75 mm}
\begin{picture}(100,30)
\put(25,10){\circle{2}}
\put(23,5){$\alpha_2$}
\put(26,10){\line(1,0){13}}
\put(42,10){\circle*{1}}
\put(45,10){\circle*{1}}
\put(48,10){\circle*{1}}
\put(51,10){\line(1,0){13}}
\put(65,10){\circle{2}}
\put(63,5){$\alpha_{\ell - 1}$}
\put(66,10.5){\line(1,0){13}}
\put(66,9.5){\line(1,0){13}}
\put(72,9){$\rangle$}
\put(80,10){\circle{2}}
\put(79,5){$\alpha_n$}
\put(24,9.5){\line(-2,-1){13}}
\put(10, 17){\circle{2}}
\put(2,17){$\alpha_{0}$}
\put(24,10.5){\line(-2,1){13}}
\put(10, 3){\circle{2}}
\put(2,1){$\alpha_1$}
\end{picture}
\caption{The Dynkin diagram of the loop algebra of $\gs\go(2n+1, \CC)$}
\label{fig:ddbl1}
\end{figure}

As we can see from the extended Dynkin diagram in Figure \ref{fig:ddbl1}
there is  only one outer automorphism, there is no translation by a half period along the diagonal, as the period is of odd length. The outer automorphism is given by permuting the entries $M_{ k(2n+1), j}$ and $M_{k(2n+1) +1, j}$, then permuting the entries $M_{i,k(2n+1) }$ and $M_{i, k(2n+1) +1}$. Again, it is given by conjugation by a suitable (infinite, periodic) permutation matrix.

\subsection{Symplectic algebras} Let us define our pairing on $\CC^{2n}$ by 
$$\langle e,f\rangle = \sum_{i=1}^{n} e_if_{2n+1-i}-f_ie_{2n+1-i}.$$
 We decompose matrices $M$ in the Lie algebra of the symplectic group in $n\times n$ blocks as 
\begin{equation}
\begin{bmatrix}
M^1&M^2\\M^3&M^4
\end{bmatrix}.\end{equation}
The blocks then satisfy the symmetry relations
$M^1_{i,j} = - M^4_{n+1-j, n+1-i}, M^2_{i,j} = M^2_{n+1-j, n+1-i}, M^3_{i,j} =  M^3_{n+1-j, n+1-i}.$
Translating into infinite  matrix terms for the loop algebras, we get for the loop algebra of the symplectic group matrices invariant again under an infinite dihedral group acting along the diagonal, but this time with different changes of signs associated to the reflections.

Again considering the Dynkin diagram in Figure \ref{fig:ddcl1} we see that there is  only one outer automorphism, associated  to the translation $(i,j)\mapsto (i+n,j+n)$. 

\begin{figure}
\setlength{\unitlength}{.75 mm}
\begin{picture}(100,15)
\put(25,10){\circle{2}}
\put(23,5){$\alpha_1$}
\put(26,10){\line(1,0){13}}
\put(42,10){\circle*{1}}
\put(45,10){\circle*{1}}
\put(48,10){\circle*{1}}
\put(51,10){\line(1,0){13}}
\put(65,10){\circle{2}}
\put(63,5){$\alpha_{\ell - 1}$}
\put(66,10.5){\line(1,0){13}}
\put(66,9.5){\line(1,0){13}}
\put(72,9){$\langle$}
\put(80,10){\circle{2}}
\put(79,5){$\alpha_n$}
\put(11,10.5){\line(1,0){13}}
\put(11,9.5){\line(1,0){13}}
\put(15,9){$\rangle$}
\put(10,10){\circle{2}}
\put(5,5){$\alpha_0$}
\end{picture}
\caption{The Dynkin diagram of the loop algebra $\gs\gp(2n, \CC)$}
\label{fig:ddcl1}
\end{figure}

\section{Flag manifolds of a loop group, and link to bundles}

We thus have  parabolic subalgebras $\gp_{LG}$, and hence subgroups $P_{LG}$, associated to each loop group $LG$. We now turn to the corresponding homogeneous spaces.

Following Atiyah, we associate to any element of $LG$ a $G$-bundle over the Riemann sphere, simply by thinking of the loop as a transition function defined along the equator over the Riemann sphere. Of course, bundles are in bijective correspondence with the double cosets
$LG_-\backslash LG/LG_+$, where $LG_+$, $LG_-$ are the subgroups of loops extending holomorphically to $|z|\leq 1, |z|\geq 1$, respectively. $LG_+$ is one of the maximal parabolic subgroups discussed earlier.

\begin{proposition}\cite{Atiyah}
Elements of $LG/LG_+$ are in one to one correspondence with pairs $(E,t)$, where $E$ is a $G$-bundle on $\bbp^1$ and $t$ is a trivialisation of $E$ on the disk $|z|\ge 1$.

More generally, if $P_{LG}$ is a standard parabolic subgroup corresponding to a parabolic $P_G$ of $G$, elements of $LG/P_{LG}$ are in one to one correspondence with triples $(E,t, R)$, where $E$ is a $G$-bundle on $\bbp^1$,  $t$ is a trivialisation of $E$ on the disk $|z|\ge 1$, and $R$ is a reduction to $P_G$  over $z=0$.
\end{proposition}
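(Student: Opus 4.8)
The plan is to realise $\bbp^1$ as the union of two coordinate patches, $D_0=\{|z|\le 1\}$ (containing $z=0$) and $D_\infty=\{|z|\ge 1\}$ (containing $z=\infty$), overlapping along an annulus $A$ about $|z|=1$, and then to translate ``bundle plus extra data'' directly into ``loop modulo a subgroup''. The one geometric input required is that a $G$-bundle on $\bbp^1$ is trivial on each of $D_0$ and $D_\infty$ separately (they are contractible Stein domains, or affine lines, according to the class of loops one fixes at the outset). Granting this, a bundle $E$ with chosen trivialisations $s_0,s_\infty$ on the two patches is recovered from its clutching function $g=s_\infty^{-1}s_0\in LG$ on $A$, and replacing $(s_0,s_\infty)$ by $(s_0 g_+^{-1},\,s_\infty g_-^{-1})$ with $g_+\in LG_+$, $g_-\in LG_-$ replaces $g$ by $g_- g g_+^{-1}$; this is exactly the double-coset description of $G$-bundles recalled just before the statement.

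For the first assertion I would send $[g]\in LG/LG_+$ to the pair $(E_g,t_g)$, where $E_g$ is the bundle clutched by $g$ and $t_g=s_\infty$ is the tautological trivialisation over $D_\infty$. Right multiplication $g\mapsto g g_+^{-1}$ with $g_+\in LG_+$ only re-trivialises $E_g$ over $D_0$ and leaves $t_g$ untouched, so $(E_g,t_g)$ is well defined up to isomorphism. Conversely, given $(E,t)$, I would pick any trivialisation $s_0$ of $E$ over $D_0$ and set $g=t^{-1}s_0$ on $|z|=1$; a different choice of $s_0$ changes $g$ only by right multiplication by an element of $LG_+$, while $t$ is fixed, so the class $[g]\in LG/LG_+$ is well defined. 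The two constructions are visibly mutually inverse.

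For the second assertion I would use that, as established in the previous section, the standard parabolic $P_{LG}$ attached to $P_G\subset G$ is $P_{LG}=\{g\in LG_+ : g(0)\in P_G\}$; hence a point of $LG/P_{LG}$ is a loop modulo $g\mapsto g g_+^{-1}$ with $g_+\in LG_+$ and $g_+(0)\in P_G$. Compared with $LG/LG_+$ this remembers precisely the value $g(0)$ modulo $P_G$ read in the $D_0$-frame, which is the same datum as a $P_G$-reduction $R$ of the fibre $E_0:=E_g|_{z=0}$: the $D_0$-trivialisation identifies $E_0$ with $G$, the subgroup $P_G$ then cuts out a reduction, and the residual freedom $g_+(0)\in P_G$ acts on $E_0$ preserving it. So $[g]\mapsto(E_g,t_g,R)$. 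For the inverse: since $E$ is trivial over $D_0$, choose $s_0$ over $D_0$ whose induced reduction of $E_0$ equals $R$ — any $s_0$ induces some reduction, and one corrects it by the action of a constant element of $G$, well defined modulo $P_G$, to land on $R$ — and set $g=t^{-1}s_0$ on $|z|=1$; the remaining ambiguity in $s_0$ is then exactly right multiplication by $g_+\in LG_+$ with $g_+(0)\in P_G$, so $[g]\in LG/P_{LG}$ is well defined, and the two maps are mutually inverse.

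The bookkeeping — well-definedness and mutual inverseness — is routine; the step I expect to need the most care is the identification of the residual gauge freedom in the last paragraph, i.e. that the set of $D_0$-trivialisations inducing a prescribed reduction $R$ is a torsor under $\{g_+\in LG_+: g_+(0)\in P_G\}$. This rests on surjectivity of the evaluation $LG_+\to G$, $g_+\mapsto g_+(0)$, together with the triviality of $G$-bundles on $D_0$ invoked at the outset, and keeping the left/right conventions consistent throughout is the only genuine subtlety.
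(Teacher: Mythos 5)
Your argument is correct and is essentially the proof the paper has in mind: the proposition is quoted from Atiyah without proof, but the double-coset description $LG_-\backslash LG/LG_+$ recalled immediately before it is exactly the clutching-function framework you set up, with the trivialisation $t$ killing the $LG_-$ freedom and the identification $P_{LG}=\{g\in LG_+: g(0)\in P_G\}$ (established in the paper's Section 2 for standard parabolics) accounting for the reduction $R$. Your flagged point about the residual gauge freedom being a torsor under $\{g_+\in LG_+: g_+(0)\in P_G\}$ is handled correctly via the constant loops, so nothing is missing.
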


The case of the ``exotic" parabolics is a little bit harder to see. Fortunately, there are the outer automorphisms that allow us to move them to standard parabolics, essentially by Hecke transforms of the associated vector bundles, as we shall see below.

One can of course consider not only individual points, but families, in particular those parametrised by a Riemann surface $\Sigma$. 

\begin{proposition}\cite{Atiyah}
Elements of $Hol(\Sigma, LG/LG_+)$ are in one to one correspondence with pairs $(E,t)$, where $E$ is a $G$-bundle on $\Sigma\times \bbp^1$ and $t$ is a trivialisation of $E$ on the product of $\Sigma$ and the disk $|z|\ge 1$. One can add in a basing condition, so that for  $Hol^*(\Sigma, LG/LG_+)$ one asks over the basepoint of $\Sigma$ that the  trivialisation of the bundle extend to the whole $\bbp^1$ lying over the basepoint.

More generally, if $P_{LG}$ is a standard parabolic subgroup corresponding to a parabolic $P_G$ of $G$, elements of $Hol(\Sigma,LG/P_{LG})$ are in one to one correspondence with triples $(E,t, R)$, where $E$ is a $G$-bundle on $\Sigma\times \bbp^1$,  $t$ is a trivialisation of $E$ on the product of $\Sigma$ and the disk $|z|\ge 1$, and $R$ is a reduction to $P_G$  over $z=0$.
\end{proposition}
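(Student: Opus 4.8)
The plan is to reduce to the pointwise statement of the preceding proposition and then supply the one new ingredient, namely that every construction can be carried out holomorphically in the parameter $\sigma\in\Sigma$; I describe the case of $LG/LG_+$, the standard-parabolic case being obtained by running the same argument and carrying along one extra piece of data.
\emph{From a map to a bundle.} Write $\bbp^1=U_0\cup U_\infty$ with $U_0=\{|z|<1+\epsilon\}$, $U_\infty=\{|z|>1-\epsilon\}\cup\{\infty\}$ and annular overlap $A$, chosen so that every loop in $LG$ is holomorphic on $A$. Since $LG\to LG/LG_+$ admits local holomorphic sections (standard for the flag spaces of loop groups), a holomorphic $f\colon\Sigma\to LG/LG_+$ has holomorphic lifts $g_a\colon\Sigma_a\to LG$ over the members of an open cover $\{\Sigma_a\}$ of $\Sigma$, with $h_{ba}:=g_b^{-1}g_a\colon\Sigma_a\cap\Sigma_b\to LG_+$ holomorphic. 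Reading $g_a(\sigma)$ as a holomorphic $G$-valued function on $A$, glue $\Sigma_a\times U_0\times G$ to $\Sigma_a\times U_\infty\times G$ along $\Sigma_a\times A$ via $g_a$; since the $h_{ba}$ take values in $LG_+$ and so extend holomorphically over $U_0$, they glue the $U_0$-charts belonging to different $\Sigma_a$ while acting trivially on the $U_\infty$-charts, producing a holomorphic $G$-bundle $E\to\Sigma\times\bbp^1$ together with the trivialisation $t$ of $E$ over $\Sigma\times\{|z|\ge1\}$ inherited from the $U_\infty$-charts. Changing a lift on the right by a holomorphic map into $LG_+$ changes $(E,t)$ only by an isomorphism, so $(E,t)$ is well defined by $f$, and restriction to $\{\sigma\}\times\bbp^1$ reproduces Atiyah's pointwise construction.

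\emph{From a bundle to a map.} Given $(E,t)$ on $\Sigma\times\bbp^1$ and $\sigma_0\in\Sigma$, the restriction $E|_{\{\sigma_0\}\times U_0}$ is a holomorphic $G$-bundle on a disk, hence holomorphically trivial by the Oka--Grauert principle; the same principle applied to the Stein contractible manifold $\Sigma'\times U_0$, with $\Sigma'$ a small polydisk neighbourhood of $\sigma_0$ over which $E$ is topologically (hence holomorphically) trivial on $\Sigma'\times U_0$, yields a trivialisation $t_0$ of $E$ over $\Sigma'\times U_0$. On $\Sigma'\times A$ the comparison $t\,t_0^{-1}$ is a holomorphic $G$-valued function, i.e. a holomorphic map $g\colon\Sigma'\to LG$; post-composing with $LG\to LG/LG_+$ kills the right-multiplication ambiguity of $t_0$ by $LG_+$ and produces a holomorphic map $\Sigma'\to LG/LG_+$ that is independent of all choices, hence agrees on overlaps with the map built from another $\sigma_0$ and glues to a global holomorphic $f\colon\Sigma\to LG/LG_+$. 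That the two assignments are mutually inverse then follows fibrewise from the preceding proposition, since every step above restricts on each $\{\sigma\}\times\bbp^1$ to the pointwise construction.

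\emph{Parabolics, basepoints, and the main difficulty.} For a standard $P_{LG}$ with finite part $P_G$ one repeats the argument with $LG_+$ replaced by $P_{LG}\subset LG_+$; the fibre of $LG/P_{LG}\to LG/LG_+$ is $LG_+/P_{LG}$, identified via evaluation at $z=0$ with $G/P_G$, so the data beyond $(E,t)$ is precisely a holomorphic reduction $R$ of $E$ to $P_G$ along $\Sigma\times\{0\}$. The basing condition defining $Hol^*$ asks that $t$ extend over the whole $\bbp^1$ above the basepoint $\sigma_*\in\Sigma$; in the transition-function picture this forces $g(\sigma_*)\in LG_+$, i.e. $f(\sigma_*)$ is the distinguished point of $LG/LG_+$, so $Hol^*$ is the space of based holomorphic maps. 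The step I expect to be the genuine obstacle is the local triviality of $E$ in the $\Sigma$-direction: one needs not merely that each fibre $E|_{\{\sigma\}\times U_0}$ is trivial but that $E$ is trivial on an honest neighbourhood $\Sigma'\times U_0$, and extracting this cleanly is where the Stein and contractibility hypotheses (and possibly a slight shrinking of the $z$-disk) are used; once it is available, the rest is only the routine verification that the various cocycles are holomorphic in $\sigma$, which holds by construction.
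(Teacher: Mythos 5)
The paper offers no proof of this proposition at all: it is stated as a citation of Atiyah (and of Pressley--Segal for the underlying factorisation), so there is nothing internal to compare your argument against. What you have written is the standard argument and it is essentially sound: local holomorphic lifts of $f$ through $LG\to LG/LG_+$ with $LG_+$-valued transition data give the gluing of $\Sigma_a\times U_0\times G$ to the common $U_\infty$-chart; conversely Grauert's Oka principle on the Stein contractible set $\Sigma'\times U_0$ supplies the local triviality in the $\Sigma$-direction, and the residual ambiguity of $t_0$ is exactly right multiplication by $LG_+$-valued holomorphic families, which dies in the quotient. The identification of the fibre $LG_+/P_{LG}\cong G/P_G$ by evaluation at $z=0$ for a \emph{standard} parabolic, and the reading of the basing condition as $f(\sigma_*)=eLG_+$, are both correct. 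The one point where you are glossing over a real issue is the phrase ``annular overlap $A$, chosen so that every loop in $LG$ is holomorphic on $A$'': for the analytic or smooth loop group no single annulus works, and for smooth loops a coset does not directly furnish a holomorphic transition function. This is not fatal -- it is a matter of fixing the model of $LG$ (polynomial loops, or loops on a circle of fixed radius, or germs) -- and the paper itself confronts exactly this in Section 5 by introducing $LG^a/P^a_{LG}$ and the limit $LG^\infty/P^\infty_{LG}$; but a careful writeup should state at the outset which loop group is meant, since the bijection as literally stated depends on it.
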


In particular, If $\Sigma$ is $\bbp^1$, a trivialisation of a bundle on $\bbp^1\times \{z=\infty\} \cup \{x=\infty\}\times \bbp^1$ extends automatically to some neighbourhood of $z=\infty$, and, using the $\bbc^*$ action, one can pull back to a bundle trivialised over $|z|\ge 1$. Thus:

\begin{proposition}\cite{Atiyah}
The space $Hol^*(\bbp^1, LG/LG_+)$ of based maps is homotopy equivalent to the space of pairs $(E,t)$, where $E$ is a $G$-bundle on $\bbp^1\times \bbp^1$ and $t$ is a trivialisation of $E$ on $\bbp^1\times \{z=\infty\} \cup \{x=\infty\}\times \bbp^1$. Over compact families, this equivalence is realised by a biholomorphic map.

More generally, if $P_{LG}$ is a standard parabolic subgroup corresponding to a parabolic $P_G$ of $G$, the space $Hol^*(\bbp^1, LG/P_{LG})$ of based maps is homotopy equivalent to the space of triples $(E,t, R)$, where $E$ is a $G$-bundle on $\Sigma\times \bbp^1$,  $t$ is a trivialisation of $E$ on $\bbp^1\times \{z=\infty\} \cup \{x=\infty\}\times \bbp^1$, and $R$ is a reduction to $P_G$  over $z=0$.
\end{proposition}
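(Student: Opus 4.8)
The plan is to reduce to the previous proposition and then to account for the difference between a trivialisation over the ``half'' $\bbp^1\times\{|z|\ge 1\}$ and one over the cross $C=(\bbp^1\times\{z=\infty\})\cup(\{x=\infty\}\times\bbp^1)$. Applying the previous proposition with $\Sigma=\bbp^1$ identifies $Hol^*(\bbp^1,LG/LG_+)$, biholomorphically in families, with the space $\mathcal M_1$ of pairs $(E,t)$ where $E$ is a $G$-bundle on $\bbp^1\times\bbp^1$ and $t$ trivialises $E$ over $\bbp^1\times\{|z|\ge 1\}$ and, by the basing condition, over the whole fibre $\{x=\infty\}\times\bbp^1$; thus $t$ is in fact defined on the region $W=(\bbp^1\times\{|z|\ge 1\})\cup(\{x=\infty\}\times\bbp^1)\supseteq C$. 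Writing $\mathcal M_2$ for the space of pairs $(E,t')$ with $t'$ a trivialisation on $C$, there is a tautological holomorphic restriction map $\rho\colon\mathcal M_1\to\mathcal M_2$, $(E,t)\mapsto(E,t|_C)$, and I would aim to show that $\rho$ is a homotopy equivalence which, over compact parameter spaces, is a biholomorphism. The parabolic statement will follow from the same argument, carrying a reduction $R$ to $P_G$ over $z=0$ along throughout.

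The substance is to reverse $\rho$. \emph{Extension lemma:} given $(E,t')\in\mathcal M_2$, the bundle $E|_{\bbp^1\times\{z=\infty\}}$ is trivialised by $t'$; since $\bbp^1\times\{z=\infty\}$ has trivial normal bundle in $\bbp^1\times\bbp^1$ and $H^1(\bbp^1,\mathcal O_{\bbp^1})=0$, all obstructions --- each living in $H^1(\bbp^1,\gg\otimes\mathcal O_{\bbp^1})=0$ --- to extending both the triviality of $E$ and the trivialisation $t'$ across the successive infinitesimal neighbourhoods vanish. Hence $E$ is trivial on a neighbourhood $U\supseteq\bbp^1\times\{z=\infty\}$ (first formally, then on an honest neighbourhood by Grauert's coherence theorem, or by formal GAGA in the algebraic setting), and $t'|_{\bbp^1\times\{z=\infty\}}$ extends to a trivialisation $\hat t$ of $E|_U$; one may moreover demand that $\hat t$ agree with $t'$ on the small disc $U\cap(\{x=\infty\}\times\bbp^1)$, which --- a constant section of $\gg\otimes\mathcal O_{\bbp^1}$ being determined by a single value --- pins $\hat t$ down uniquely (and, without that demand, the valid extensions form a contractible affine space). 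Then $\hat t$ and $t'$ glue to a trivialisation over $U\cup(\{x=\infty\}\times\bbp^1)$. I expect this lemma, and especially its version over a proper base (again via those $H^1$ vanishings together with formal GAGA and semicontinuity in families), to be the main obstacle: it is what makes the soft restriction map $\rho$ a genuine homotopy equivalence, and a biholomorphism over compact families.

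Finally the $\bbc^*$-action repairs the domain. Take $U$ of the standard shape $\bbp^1\times\{|z|>1/\delta\}$ with $0<\delta<1$; for $|\lambda|>1/\delta$ the rescaling $\mu_\lambda\colon(x,z)\mapsto(x,\lambda z)$ satisfies $\mu_\lambda^{-1}(U)\supseteq\bbp^1\times\{|z|\ge 1\}$, preserves $\{x=\infty\}\times\bbp^1$, and fixes $\bbp^1\times\{z=0\}$ pointwise. Fixing one such $\lambda_0$, put $\sigma(E,t')=(\mu_{\lambda_0}^{*}E,\ \mu_{\lambda_0}^{*}\hat t\ \text{glued with}\ \mu_{\lambda_0}^{*}t')\in\mathcal M_1$ (the basing condition holds because the glued trivialisation is defined on all of $\{x=\infty\}\times\bbp^1$). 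Then $\rho\circ\sigma$ and $\sigma\circ\rho$ are homotopic to the respective identities, the homotopy in each case being the path $\mu_\lambda$ with $\lambda$ running from $\lambda_0$ to $1$ inside $\{|\lambda|\ge 1\}$: along it the rescaled glued trivialisation remains defined on a region containing $C$ in the first case and containing $W$ in the second, so the formula makes sense throughout and the endpoints are $\rho\circ\sigma$, $\mathrm{id}$ (resp. $\sigma\circ\rho$, $\mathrm{id}$). This gives the homotopy equivalence; over compact families, performing the extension lemma relatively makes $\rho$ and $\sigma$ mutually inverse morphisms of complex spaces, i.e. a biholomorphism. For a standard parabolic $P_{LG}\leftrightarrow P_G$ the argument runs verbatim on triples $(E,t,R)$: the reduction $R$ sits over $z=0$, which is disjoint from the neighbourhood of $z=\infty$ where the extension is made and is fixed by every $\mu_\lambda$, so it is untouched.
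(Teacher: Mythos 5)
Your argument is correct and is essentially the paper's own: the paper disposes of this proposition in one sentence, observing that a trivialisation on the cross $\bbp^1\times\{z=\infty\}\cup\{x=\infty\}\times\bbp^1$ extends automatically to a neighbourhood of $z=\infty$ and that the $\bbc^*$-action then pulls the data back to a bundle trivialised over $|z|\ge 1$, which is exactly your restriction map $\rho$, extension lemma, and rescaling $\mu_\lambda$. You have merely supplied the cohomological details ($H^1(\bbp^1,\gg\otimes\mathcal O)=0$, uniqueness of the pinned extension) that the paper leaves implicit.
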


Let us look at some of the homogeneous spaces in turn:

\subsection{$Sl(n,\bbc)$ or $Gl(n,\bbc)$} Let us choose the standard normalisation, for which the root spaces correspond to the entries in the $n$ by $n$ matrices.  We start with an element of  $LGl(n,\bbc)/B_{LGl(n,\bbc)}$, the full flag space, and see what happens when we project to $LGl(n,\bbc)/P_{LGl(n,\bbc)}$. As noted, an element of $LGl(n,\bbc)/B_{LGl(n,\bbc)}$ gives us a rank $n$ vector bundle $E$ over the Riemann sphere,  with degree, let us say, zero, along with a framing near infinity , and a flag in the fiber $E_0$ over the origin:   $E_0^1\subset E_0^2\subset \dots  E_0^n= E_0$, where the superscript indicates the dimension. This flag reduces the structure group to $B_{Gl(n,\bbc)}$ over the origin, and reduces the group of compatible bases near the origin to $B_{LGl(n,\bbc)}$.

The essential point here is that such a bundle with flag corresponds not just to one bundle, but to a whole sequence. Indeed, let us define the  sheaves of sections of $E$, holomorphic away from the origin, meromorphic near the origin:
\begin{equation}\label{sl(n)-sheaves}
E^{i,j}= \{ s| {\rm order}(s) = -i,  {\rm leading\ term\ }\in E_0^j\}.
\end{equation}
One has $E= E^{0,n}$. These sheaves are torsion free and thus correspond to bundles. As a bundle, the degree of $E^{i,j}$ is $n(i-1)+j$. One has the nested sequence
 \begin{equation}
\dots \subset E^{-1,1}\subset E^{-1,2}\subset \dots  E^{-1,n}\subset E^{0,1}\subset E^{0,2}\subset \dots E^{0,n}\subset E^{1,1}\dots \label{inclusions}
\end{equation}
Near the origin, $z^kE^{i,j} = E^{i-k, j}$. Each of these bundles comes equipped by a flag at the origin, defined in essence by the $n-1$ sheaves which precede it in the sequence. This way of looking at things puts the bundle $E$ and all its Hecke transforms on the same footing.  With this, projection to $LGl(n,\bbc)/P_{LGl(n,\bbc)}$ for one of the standard parabolics amounts to deleting a certain subset $E^{0,j_s}, j_s\in \Delta_0$ of $E^{0,1}, E^{0,2} \dots  E^{0,n-1}$ from the sequence of inclusions, as well as all their ``translates" $E^{i, j_s}$ in the first index. Considering the ``exotic" parabolics also simply allows one to delete the $E^{i,n}$ as well. Thus, since parabolics correspond to a subset $\tilde\Delta_0$ of $\{1,2,\dots ,n\}$
\begin{proposition}
Let $P_{LGl(n,\bbc)}$ be the parabolic associated to $\tilde\Delta_0$. Elements of $LGl(n,\bbc)/P_{LGl(n,\bbc)}$ correspond to a infinite sequence 
 \begin{equation}
\dots \subset E^{-1,j_1}\subset E^{-1,j_2}\subset \dots  E^{-1,j_k}\subset E^{0,j_1}\subset E^{0,j_2}\subset \dots E^{0,j_k}\subset E^{1,j_1}\dots 
\end{equation}
of rank $n$ torsion free sheaves over $\bbp^1$, with $\{j_1<j_2<\dots <j_k\} = \{1,2,\dots ,n\}\backslash \tilde\Delta_0$, equipped with compatible trivialisations over $|z|\geq 1$, and such that the quotient of two successive sheaves $E^{i,j_{\ell+1} }/E^{i,j_\ell}$ is a skyscraper sheaf over $z=0$ isomorphic to ${\ca O}_0^{j_{\ell+1}-j_\ell}$, and similarly the quotient $E^{i+1,j_{1}}/E^{i,j_k}$ is a skyscraper sheaf over $z=0$isomorphic to ${\ca O}_0^{n + j_1 - j_{k}}$.

Choosing one of these sheaves as our base bundle, say, $E^{i,j_s}$, this data corresponds to a bundle of degree $ni+j_s$, trivialised over $|z|\geq 1$, and equipped with a flag of subspaces of dimensions $j_{s+1}-j_{s}, j_{s+2}-j_s,\dots j_k-j_s, n+j_1-j_s,\dots , n+j_{s-1}-j_s$.
\end{proposition}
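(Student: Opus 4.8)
The plan is to prove the correspondence first for the standard parabolics, where it is essentially a repackaging of Atiyah's Proposition, and then to deduce the exotic case by transporting the result through the outer automorphism of \eqref{outer-sl}. So suppose first that $n\notin\tilde\Delta_0$, i.e.\ $P_{LGl(n,\bbc)}$ is standard with related finite parabolic $P_{Gl(n,\bbc)}$ determined by $\tilde\Delta_0\subseteq\{1,\dots,n-1\}$. By Atiyah's Proposition an element of $LGl(n,\bbc)/P_{LGl(n,\bbc)}$ is a triple $(E,t,R)$ with $R$ a reduction of $E$ to $P_{Gl(n,\bbc)}$ over $z=0$, and for $Gl(n,\bbc)$ such a reduction is precisely a partial flag $E_0^{j_1}\subset E_0^{j_2}\subset\dots\subset E_0^{j_{k-1}}\subset E_0^{j_k}=E_0$ in the fibre over the origin, where $\{j_1<\dots<j_k\}=\{1,\dots,n\}\setminus\tilde\Delta_0$ and $j_k=n$. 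From $(E,t,R)$ I would build the sheaves $E^{i,j}$, $j\in\{j_1,\dots,j_k\}$, $i\in\bbz$, by formula \eqref{sl(n)-sheaves} applied to the partial flag $R$; conversely, from such a sequence one recovers $E=E^{0,j_k}$, its trivialisation from the common trivialisations near $z=\infty$, and $R$ as the images of the $E^{0,j_\ell}$ in $E_0$. The verifications are routine: away from $z=0$ each $E^{i,j}$ equals $E$, and near $z=0$ it is the lattice $z^{-i}\Lambda^{0,j}$ with $\Lambda^{0,j}$ the preimage of $E_0^j$ in $E_0$, so $E^{i,j}$ is a coherent subsheaf of the sheaf of sections of $E$ meromorphic at $z=0$ and holomorphic elsewhere, hence torsion free, hence locally free of rank $n$ on the smooth curve $\bbp^1$; the inclusions and the compatibility of the trivialisations near $z=\infty$ are immediate; and, since $z$ annihilates each quotient $E^{i,j_{\ell+1}}/E^{i,j_\ell}$ and $E^{i+1,j_1}/E^{i,j_k}$, these quotients are direct sums of copies of $\cO_0$, not thickenings, of lengths equal to the degree differences $j_{\ell+1}-j_\ell$ and $n+j_1-j_k$, the degrees themselves following inductively from $\deg E=0$. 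As the two constructions are manifestly mutually inverse, this proves the proposition for standard $P_{LGl(n,\bbc)}$.

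For the exotic parabolics ($n\in\tilde\Delta_0$) I would appeal to the outer automorphism of \eqref{outer-sl}: as already observed, conjugation by a suitable power $A(z)^m$ rotates the crosses on the extended Dynkin diagram of Figure \ref{fig:ddal1} and carries the given exotic $P_{LGl(n,\bbc)}$ to a standard parabolic $P'_{LGl(n,\bbc)}$, hence induces an isomorphism $LGl(n,\bbc)/P_{LGl(n,\bbc)}\cong LGl(n,\bbc)/P'_{LGl(n,\bbc)}$. Under Atiyah's correspondence in its infinite-matrix form --- in which a point of a partial flag variety of $LGl(n,\bbc)$ is encoded by a $\bbz$-indexed flag of $n$-periodic lattices, that is, by the sequence of sheaves $E^{i,j}$ --- this isomorphism acts by a shift of indices, i.e.\ by a Hecke transform of the underlying bundle together with a relabelling of the $E^{i,j}$. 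Transporting the standard-case description back through it then produces exactly the asserted sequence, now with the sheaves $E^{i,n}$ and all their translates deleted, i.e.\ with $n$ permitted in $\tilde\Delta_0$; this disposes of the general parabolic.

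For the last sentence of the statement: once one term $E^{i,j_s}$ of the sequence is singled out as the bundle, the $k-1$ sheaves of the surrounding period, $E^{i,j_{s+1}},\dots,E^{i,j_k},E^{i+1,j_1},\dots,E^{i+1,j_{s-1}}$, map to a nested flag in the fibre $(E^{i,j_s})_0$ whose successive dimensions are the lengths of the skyscraper quotients $E^{i,j_{s+\ell}}/E^{i,j_s}$ and $E^{i+1,j_\ell}/E^{i,j_s}$, namely $j_{s+1}-j_s,\dots,j_k-j_s,n+j_1-j_s,\dots,n+j_{s-1}-j_s$, and the degree of $E^{i,j_s}$ is obtained from $\deg E=0$ by the same length count; since two choices of base sheaf differ by a Hecke transform, the intrinsic datum is the sequence rather than any one of its terms.

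The one step I expect to demand genuine care is the exotic case: one must check that conjugation by $A(z)^m$ really sends the $n$-periodic block-upper-triangular lattice flags of one pattern to those of the shifted pattern, and that the shifted bundle is a Hecke transform of the original with the claimed numerical invariants. Because of the fractional powers $z^{1/n}$ in \eqref{outer-sl}, this is cleanest in the infinite-matrix model, where $A(z)$ acts as a shift of indices and the whole statement reduces to bookkeeping of which matrix entries --- equivalently, which sheaves $E^{i,j}$ --- survive.
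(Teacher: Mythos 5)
Your proposal is correct, but it reaches the exotic case by a genuinely different route from the paper. The paper does not split into standard and exotic cases at all: it starts from the Borel quotient $LGl(n,\bbc)/B_{LGl(n,\bbc)}$ (itself a standard parabolic quotient, so covered by Atiyah's proposition), builds the full doubly-infinite chain of sheaves $E^{i,j}$ via \eqref{sl(n)-sheaves} once and for all, and then obtains \emph{every} parabolic quotient as the image under the projection $LGl(n,\bbc)/B_{LGl(n,\bbc)}\to LGl(n,\bbc)/P_{LGl(n,\bbc)}$, which in this picture is simply the periodic deletion of the sheaves $E^{i,j_s}$, $j_s\in\tilde\Delta_0$; since the chain already contains the $E^{i,n}$ and all their translates on an equal footing with the others, deleting them (the exotic case) requires no more work than deleting any $E^{i,j}$ with $j<n$. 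This is exactly the point of the paper's remark that the sequence ``puts the bundle $E$ and all its Hecke transforms on the same footing.'' You instead prove the standard case directly from Atiyah's proposition for the partial flag and then transport to the exotic case by conjugation by $A(z)^m$; this is legitimate (the rotation of the affine diagram does carry any parabolic of $LGl(n,\bbc)$ to a standard one, and the induced map on quotients is the index shift you describe), and it has the merit of making explicit the role of the outer automorphism that the paper only asserts in the preamble to Section 3 and exploits later for Hecke transforms --- but it costs you the case split and the bookkeeping with fractional powers of $z$ that you rightly flag as the delicate step, all of which the paper's top-down deletion argument avoids. One small caution if you keep your route: your verification that the quotients are reduced skyscrapers (``$z$ annihilates each quotient'') and the induced degree count are exactly the paper's implicit argument, but note that the degree formula you would derive is $\deg E^{i,j}=n(i-1)+j$ (normalised by $\deg E^{0,n}=0$), which is what the body of the paper states; the proposition's ``$ni+j_s$'' is off by $n$ from this normalisation, so do not try to match it literally.
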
 

For example, elements of  $LGl(2,\bbc)/B_{LGl(2,\bbc)}$ one has a degree zero rank 2 bundle $E$, with a distinguished line over the origin; projecting to the two parabolics gives us in turn 1)  the bundle $E$, or 2) the bundle $\tilde E = E^{0,1}$, of degree $-1$, the Hecke transform of $E$. 

In general, the Hecke transform corresponds to the action of the outer automorphism of $LGl(n,\bbc)$ given by conjugation by the matrix of \ref{outer-sl}.

We note in ending that the case of $Sl(n,\bbc)$ is essentially similar, apart from the fact that the bundles $E^{i,j}$ are equipped with volume forms which   have zeroes or poles of order $-(n-1)i+j$ at the origin. The volume forms are compatible with each other in the obvious sense. 

\subsection{Orthogonal groups in even dimension} We start with the even rank case, and again start with the full flags. An element of $LSO(2n,\bbc)/B_{LSO(2n,\bbc)}$ corresponds to a rank $2n$ vector bundle $E$ over $\bbp_1$, equipped with a symmetric non degenerate quadratic form; it is trivialised on the disk $|z|\ge 1$, and at the origin, is equipped with an isotropic flag $E_0^1\subset E_0^2\subset \dots \subset E_0^{n-1}\subset E_0^{n,+}$, (superscripts denote dimension) determining a coisotropic flag $E_0^{n+1}=(E_0^{n-1})^\perp\subset\dots E_0^{2n-1}=(E_0^1)^\perp$ and a second isotropic $n$-plane $E_0^{n,-}$ with 
$E_0^{n-1}\subset E_0^{n,-}\subset E_0^{n+1}$. In an adapted basis $e_i$ at the origin, in which the quadratic form is given by $\sum_{i=1}^na_ia_{2n+1-i}$, the subspace $E_0^k$ for $k\ne n$ corresponds to the first $k$ elements of the basis, with $E_0^{n,+}$ corresponding to the first $n$ elements, and $E_0^{n,-}$  to the first $n+1$ elements, less the $n$th element.
In short, we have the flag:
$$
\begin{matrix}
&&&&E_0^{n,+}\cr
E_0^1\subset& E_0^2\subset& \dots & E_0^{n-1}\subset&&\subset E_0^{n+1}&\subset\dots &\subset E_0^{2n-1}\cr
&&&&E_0^{n,-}\cr
\end{matrix}.
$$

Again one wants to think of $E$ as an infinite sequence of bundles, with isotropic-coisotropic flags. As above, define the  sheaves of sections of $E$, holomorphic away from the origin, meromorphic near the origin:
\begin{equation}
E^{i,j}= \{ s| {\rm order}(s) = -i,  {\rm leading\ term\ }\in E_0^j\}
\end{equation}
Here we allow the index $j$ to also stand for $(n,+)$ or $(n,-)$. There are also  sheaves 
$ E^{i,2n,+ }$ and $E^{i,2n,-}$.  There is then a sequence of inclusions:
\begin{equation}
\label{orthosequence}
\begin{matrix}
&&&&&&&E^{i-1,n,+}\cr
\dots \subset &E^{i-1,1}&\subset& E^{i-1,2}&\subset \dots & E^{i-1,n-1}&\subset& &\subset\cr
&&&&&&&E^{i-1,n,-}&\cr\cr
&&&&&&&E^{i-1,2n,+}\cr
\dots & E^{i-1,n+1}&\subset& E^{i-1,n+2}&\subset\dots &E^{i-1,2n-1}&\subset&&\subset \cr
\cr
&&&&&&&E^{i ,2n,-}\cr\cr
&&&&&&&E^{i ,n,+}\cr
\dots &E^{i ,1}&\subset& E^{i-1,2}&\subset \dots & E^{i ,n-1}&\subset&&\subset\dots \cr
&&&&&&&E^{i ,n,-}\cr
\cr
\end{matrix}
\end{equation}
The sheaves $E^{i,k}, i\in \bbz$ all have an orthogonal structure away from the origin, which degenerates at the origin, acquiring a zero or a pole. Near the origin, the sheaf $E^{i,j}$ thought of as a bundle, has a basis   $z^{-i}e_1, .., z^{-i}e_j, z^{-i+1}e_{j+1},\dots z^{-i+1}e_{2n}$, and so, a quadratic form 
\begin{align} {\rm for}\ j\leq n:& \sum_{i=1}^j z^{-2i+1} a_ia_{2n+1-i} +  \sum_{i=j+1}^n z^{-2i+2} a_ia_{2n+1-i}\\
{\rm for}\ n<j\leq 2n,& \sum_{i=1}^{j-n} z^{-2i} a_ia_{2n+1-i} + \sum_{i=j-n+1}^n z^{-2i+1} a_ia_{2n+1-i}.\label{quadform}
\end{align}

For the sheaves $E^{i,n},E^{i,2n}, i\in \bbz$, alternately, one can think of the fibers as being equipped with a conformal structure, which extends in a non degenerate way to the origin. The sheaves again have a privileged isotropic-coisotropic flag at the origin, with the passage from $E^{i,n}$ to $E^{i,2n}$ exchanging isotropic and coisotropic. Again, this last Hecke transform can be though of as being mediated by the outer automorphism of the loop group defined by conjugating by \ref{slip}. In terms of infinite matrices, this automorphism moves us either up or down by $n$ steps along the diagonal. 

The orthogonal groups in $2n$ dimensions already have an outer automorphism, which in terms of flags permutes the half-dimensional isotropic subspaces $E_0^{n,+}$  and $E_0^{n,-}$, and leaves the other spaces of the flag in place; this of course extends to the loop group. When one goes to the loop group, one acquires a similar automorphism, permuting $E_{i,2n,+}$ and $E_{i,2n,-}$, but leaving the other subspaces in place. These two automorphisms, together with \ref{slip}, generate a group of order 4, corresponding to the symmetries of the extended Dynkin diagram.

When one passes from the Borel to the various standard parabolics, one deletes as for the case of $Sl(n,\bbc)$ a certain number of  subspaces $E_0^{j_l}$, including possibly $E_0^{n,\pm}$, of the isotropic flag at the origin, as well as their coisotropic twins $(E_0^j)^\perp = E_0^{2n-j_l}$. In terms of the infinite sequence of sheaves, there is a corresponding infinite sequence of deletions; the exotic parabolics amount to including $E^{i,2n,\pm}$ in the possible exclusions. It is important to note that there is some redundancy in the sequence \ref{orthosequence}; for example, $E_0^{n,+}$ and $E_0^{n,-}$ together determine $E_0^{n-1}$, $E_0^{n+1}$. Thus deletions might not actually determine a new flag manifold; however subsets $\tilde\Delta_0$ of the simple roots do correspond to periodic deletions $D_0$ in the sequence above.

\begin{proposition}
Let $P_{LSO(2n,\bbc)}$ be the parabolic associated to $\tilde\Delta_0$. Elements of $LSO(2n,\bbc)/P_{LSO(2n,\bbc)}$ correspond to a infinite sequence of rank $2n$ torsion free sheaves, with inclusions modelled on \ref{orthosequence} with periodic deletions $D_0$. Successive quotients are the same as for $Gl(2n,\bbc)$. The  sheaves $E^{i,j}$ remaining in the list have compatible quadratic forms \ref{quadform};
 they are also equipped with compatible orthonormal trivialisations over $|z| >  1$.

There is one Hecke transformation which acts by permuting $E_0^{n,\pm}$, another by permuting $E_0^{2n,\pm}$, and a final one which acts by shifting the second index by $n$.
\end{proposition}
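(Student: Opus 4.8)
The proof proceeds in parallel with the $Gl(n,\bbc)$ case already treated, which gives the template: one starts from the full flag variety $LSO(2n,\bbc)/B_{LSO(2n,\bbc)}$, identifies an element of it with the data $(E,t,\text{flag})$ via the first Proposition of this section, and then analyses what survives when one projects to $LSO(2n,\bbc)/P_{LSO(2n,\bbc)}$. The plan is first to recall from the preceding discussion the full-flag description: $E$ is a rank $2n$ bundle with a nondegenerate symmetric form, trivialised over $|z|\geq 1$, carrying at the origin the isotropic--coisotropic flag displayed above, together with the two half-dimensional isotropic planes $E_0^{n,\pm}$. One then forms the sheaves $E^{i,j}$ of sections holomorphic off the origin with prescribed pole order and leading term, exactly as in \eqref{orthosequence}, and checks that each is torsion free, hence a bundle, and that it inherits the quadratic form \eqref{quadform} (degenerate at the origin) and a compatible orthonormal trivialisation over $|z|\geq 1$ coming from $t$. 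This is the content already assembled in the running text; the Proposition merely packages it.

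Next I would verify the two claims about quotients. Near the origin $z^kE^{i,j}=E^{i-k,j}$, so the sequence \eqref{orthosequence} is $2n$-periodic up to tensoring by $z$; the successive quotients $E^{i,j_{\ell+1}}/E^{i,j_\ell}$ and $E^{i+1,j_1}/E^{i,j_k}$ are computed fibrewise at $z=0$ and are skyscrapers of the stated ranks, identical to the $Gl(2n,\bbc)$ computation since the defining condition on leading terms only involves the dimensions of the flag steps, not the quadratic form. The passage to a parabolic $P_{LSO(2n,\bbc)}$ then amounts, by the bijection between subsets $\tilde\Delta_0\subset\triangle(\tilde L\gg)$ and parabolics established in Section 2, to deleting a periodic subset $D_0$ of the sheaves in \eqref{orthosequence}: deleting an $\alpha_i$ with $i\notin\{0,1,n-1,n\}$ removes an interior flag step and its $\perp$-twin, deleting $\alpha_{n-1}$ or $\alpha_n$ removes one of $E_0^{n,\pm}$, and deleting $\alpha_0$ (the exotic case) removes the $E^{i,2n,\pm}$. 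One must observe here the redundancy already flagged in the text — $E_0^{n,+}$ and $E_0^{n,-}$ together pin down $E_0^{n-1}$ and $E_0^{n+1}$ — so that not every nominal deletion changes the flag manifold, but every genuine $\tilde\Delta_0$ does correspond to a well-defined periodic deletion pattern, which is exactly why the statement is phrased in terms of $\tilde\Delta_0$.

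Finally, the three Hecke transformations: these are read off from the automorphisms of the extended Dynkin diagram of Figure~\ref{fig:dddl1} computed earlier in Section 2. The automorphism \eqref{eq:perm} inherited from $SO(2n,\bbc)$ swaps $\alpha_{n-1}\leftrightarrow\alpha_n$ and hence permutes $E_0^{n,\pm}$; combining it with the half-period shift \eqref{slip} gives the automorphism swapping $\alpha_0\leftrightarrow\alpha_1$, which on sheaves permutes $E_0^{2n,\pm}$; and \eqref{slip} itself, acting as the translation $(i,j)\mapsto(i+n,j+n)$ on infinite matrices, shifts the second index by $n$. Each of these is an honest automorphism of $LSO(2n,\bbc)$, so it carries $P_{LSO(2n,\bbc)}$ to a conjugate parabolic and induces the claimed isomorphism on the homogeneous spaces, realised concretely on sheaf data as stated. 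The main obstacle, as in the $Gl(n,\bbc)$ case but more so here, is bookkeeping: keeping the isotropic/coisotropic labels, the $\pm$ decorations, and the periodicity all consistent through the deletion process, and checking carefully that the redundancy among flag steps does not produce spurious moduli — that is, that distinct $\tilde\Delta_0$ really do give distinct deletion patterns $D_0$ and conversely. Everything else is parallel to the special linear case and to the explicit loop-group computations of Section 2.
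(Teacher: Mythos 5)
Your proposal is correct and follows essentially the same route as the paper, which in fact offers no separate proof: the proposition is a summary of the immediately preceding discussion (the sheaves $E^{i,j}$, the quadratic forms \eqref{quadform}, the periodic deletions with their redundancy, and the identification of the three Hecke transforms with the extended-Dynkin-diagram automorphisms of Section 2), and your write-up reconstructs exactly that discussion with the same ingredients.
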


\subsection{Orthogonal groups in odd dimension} 
The odd dimensional case is in some aspects similar, but in others distinct: for the group $SO(2n+1,\bbc)$, one obtains a rank $2n+1$ bundle, trivialised as before on the disk $|z|\ge 1$, and at the origin, equipped with an isotropic flag $E_0^1\subset E_0^2\subset \dots \subset E_0^{n}$, which extends to the orthogonal complement coisotropic flag $E_0^{n+1}=(E_0^{n})^\perp\subset\dots E_0^{2n}=(E_0^1)^\perp$, giving 
an infinite chain
\begin{equation}\label{orthosequence2}
E^{-1,2n+1}\subset E^{0,1}\subset E^{0,2}\subset \dots \subset E ^{0,2n+1}\subset E^{1,1}\dots 
\end{equation}
 Again, passing to the parabolics gives us deletions. 
On the other hand, there is only one non-trivial outer automorphism, this time permuting $E^{i,2n+1}$ and $E^{i+1, 1}$.

\begin{proposition}
Let $P_{LSO(2n+1,\bbc)}$ be the parabolic associated to $\tilde\Delta_0$. Elements of $LSO(2n+1,\bbc)/P_{LSO(2n+1,\bbc)}$ correspond to a infinite sequence of rank $2n+1$ torsion free sheaves, with inclusions modelled on \ref{orthosequence2} with periodic deletions $D_0$. Successive quotients are the same as for $Gl(2n,\bbc)$. The  sheaves $E^{i,j}$ remaining in the list have compatible quadratic forms as in the even case;
 they are also equipped with compatible orthonormal trivialisations over $|z| >  1$.
\end{proposition}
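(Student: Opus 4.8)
The plan is to run exactly the argument already used above for $\gs\gl(n,\bbc)$ and $\gs\go(2n,\bbc)$; the only genuinely new ingredients are the single affine Hecke automorphism of Figure \ref{fig:ddbl1} and the way the odd-dimensional quadratic form propagates along the chain \ref{orthosequence2}.

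First I would dispose of the Borel. Applying Atiyah's Proposition above to $B_{LSO(2n+1,\bbc)}$, whose related finite parabolic is the Borel of $SO(2n+1,\bbc)$, identifies $LSO(2n+1,\bbc)/B_{LSO(2n+1,\bbc)}$ with triples $(E,t,R)$ in which $E$ is a rank $2n+1$ bundle on $\bbp^1$ carrying a non-degenerate symmetric form, $t$ is a compatible trivialisation over $|z|\ge 1$, and $R$ is a reduction to $B_{SO(2n+1,\bbc)}$ over $z=0$, that is, a full isotropic flag $E_0^1\subset\dots\subset E_0^n$, with coisotropic steps $E_0^{n+j}=(E_0^{n+1-j})^\perp$ and $E_0^{2n+1}=E_0$. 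From this datum one builds the bi-infinite chain \ref{orthosequence2} by the recipe \ref{sl(n)-sheaves}, now allowing the index $j$ to run over the coisotropic range as well. Working locally at $z=0$ in an adapted basis, and using that over a smooth curve a torsion-free sheaf is locally free, one checks: each $E^{i,j}$ is a rank $2n+1$ bundle, of degree $(2n+1)(i-1)+j$; $z\,E^{i,j}=E^{i-1,j}$; the inclusions of \ref{orthosequence2} hold, with successive quotients skyscrapers over $z=0$ of length one (each $\mathcal O_0$), matching the $\gg\gl$ pattern; and restricting the form of $E$ to $E^{i,j}$, written in the basis $z^{-i}e_1,\dots,z^{-i}e_j,\,z^{-i+1}e_{j+1},\dots,z^{-i+1}e_{2n+1}$, produces the staircase form of \ref{quadform} adapted to $2n+1$ slots, so the form acquires a prescribed zero or pole at the origin while the trivialisations inherited from $t$ stay orthonormal and mutually compatible over $|z|>1$. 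Conversely, a chain with these properties recovers $R$ — the flag at the origin of any chosen member is read off from the leading coefficients of its immediate predecessors — and $t$, while the quadratic structure is forced to descend from a single symmetric form on the ambient bundle by its compatibility away from $z=0$; this furnishes the inverse map.

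Next I would treat a standard parabolic $P_{LSO(2n+1,\bbc)}$, corresponding to a parabolic $P_G$ of $SO(2n+1,\bbc)$. Here Atiyah's Proposition applies verbatim: $R$ is now a reduction to $P_G$, i.e. the partial isotropic flag obtained from the full one by keeping only the steps $E_0^{j_\ell}$ with indices outside $\tilde\Delta_0$. Under the sheaf dictionary just set up this says precisely that one deletes from \ref{orthosequence2} the members $E^{0,j}$ with $j\in\tilde\Delta_0$, together with their coisotropic twins $E^{0,2n+1-j}$ and all of their $z$-translates $E^{i,j}$: this is the periodic deletion $D_0$ attached to $\tilde\Delta_0$. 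The surviving sheaves still carry the forms \ref{quadform} and the compatible orthonormal trivialisations over $|z|>1$, and now successive quotients are $\mathcal O_0^{j_{\ell+1}-j_\ell}$, exactly as in the $\gg\gl$ case. The exotic parabolics — those not crossing $\alpha_0$ — are then folded in using the unique non-trivial outer automorphism of the extended Dynkin diagram, the one swapping $\alpha_0$ and $\alpha_1$, realised by conjugation by the infinite periodic permutation matrix described above; on \ref{orthosequence2} it acts as the Hecke transform permuting $E^{i,2n+1}$ with $E^{i+1,1}$, a shift by one step along the diagonal. Pulling the standard-parabolic correspondence back through it gives the correspondence for the exotic parabolics, the net effect on the chain still being a periodic deletion $D_0$, and carrying the forms \ref{quadform} to forms of the same shape, with the zero or pole merely relocated.

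The step I expect to be the main obstacle is the bookkeeping forced by the redundancy in \ref{orthosequence2}: since the coisotropic steps of the flag are determined by the isotropic ones via orthogonal complement, not every periodic deletion of the chain comes from a genuine parabolic, and one must check that the patterns $D_0$ arising from honest subsets $\tilde\Delta_0$ of simple roots are exactly the deletions that stay consistent when the relation $E_0^{n+j}=(E_0^{n+1-j})^\perp$ is propagated along the whole chain — equivalently, that two admissible but differently presented deletions never name the same flag manifold. Everything else reduces to the local computation at $z=0$ already carried out in the $\gs\gl(n,\bbc)$ and $\gs\go(2n,\bbc)$ cases.
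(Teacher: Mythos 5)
Your overall strategy --- Atiyah's correspondence for the Borel, the local computation at $z=0$ producing the chain of sheaves with the staircase quadratic forms, and periodic deletions for the parabolics --- is the same as the discussion the paper gives before stating the proposition, and those parts are fine. The genuine gap is in your treatment of the exotic parabolics. You propose to obtain them by pulling the standard-parabolic correspondence back through the unique outer automorphism of the extended Dynkin diagram of $\gs\go(2n+1,\bbc)$, which swaps $\a_0$ and $\a_1$. That reduction works only when $\triangle_\chi$ contains $\a_0$ but not $\a_1$: swapping then yields a standard parabolic. If $\triangle_\chi$ contains both $\a_0$ and $\a_1$ --- for instance the exotic maximal parabolic obtained by crossing only $\a_2$ --- the automorphism fixes $\triangle_\chi$ and its image is still exotic, so your argument produces no correspondence at all for these parabolics. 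This is the same phenomenon the paper records explicitly for $\gs\go(2n,\bbc)$, where only those $\triangle_\chi$ not containing the full orbit of $\a_0$ under the diagram automorphisms can be moved to standard position; the odd orthogonal diagram is no better in this respect.

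The repair, which is what the paper's phrase ``passing to the parabolics gives us deletions'' implicitly carries out, is to treat all parabolics uniformly and directly: use the fibration $LG/B_{LG}\rightarrow LG/P_{LG}$, whose fibre is the finite-dimensional flag manifold of the Levi factor of $P_{LG}$, and verify that the $P_{LG}$-orbit of a full chain consists precisely of the chains agreeing outside the deleted positions. Deleting the $E^{i,2n+1}$ (the case $\a_0\in\triangle_\chi$) is then on exactly the same footing as deleting any other $E^{i,j}$ together with its coisotropic twin $E^{i,2n+1-j}$, and no appeal to the automorphism is needed; the automorphism only enters afterwards, as the Hecke transform permuting $E^{i,2n+1}$ and $E^{i+1,1}$. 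As for the obstacle you anticipate in your last paragraph: the redundancy the paper worries about is specific to the even case, where $E_0^{n,+}$ and $E_0^{n,-}$ jointly determine $E_0^{n-1}$ and $E_0^{n+1}$; in the odd case the chain \ref{orthosequence2} is unbranched, the only relations are the pairings $E_0^{n+j}=(E_0^{n+1-j})^\perp$ that you already build into $D_0$, and distinct subsets $\tilde\Delta_0$ give distinct deletion patterns, so that issue does not actually arise here.
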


\subsection{Symplectic groups} This case is essentially similar to that of the orthogonal groups in even dimensions, but with an isotropic-coisotropic flag.
$$
E_0^1\subset  E_0^2\subset  \dots   E_0^{n-1}\subset E_0^n\subset E_0^{n+1} \subset\dots  \subset E_0^{2n-1}
$$
giving the infinite sequence of sheaves:
\begin{equation}\label{symplsequence}
E^{-1,2n }\subset E^{0,1}\subset E^{0,2}\subset \dots \subset E ^{0,2n }\subset E^{1,1}\dots 
\end{equation}
\begin{proposition}
Let $P_{LSp(n,\bbc)}$ be the parabolic associated to $\tilde\Delta_0$. Elements of $LSp(n,\bbc)/P_{LSp(n,\bbc)}$ correspond to a infinite sequence of rank $2n$ torsion free sheaves, with inclusions modelled on \ref{symplsequence} with periodic deletions $D_0$. Successive quotients are the same as for $Gl(2n,\bbc)$. The  sheaves $E^{i,j}$ remaining in the list have compatible quadratic forms \ref{quadform};
 they are also equipped with compatible orthonormal trivialisations over $|z| >  1$.

There is one Hecke transformation which acts  by shifting indices by $(i,j)\mapsto (i,j+n) (j\leq n)$ or $(i,j)\mapsto (i+1,j-n) (j>n)$.
\end{proposition}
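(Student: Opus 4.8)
The plan is to run the argument given for $SO(2n,\bbc)$ almost verbatim, inserting the changes forced by replacing the symmetric form with the antisymmetric pairing $\langle e,f\rangle=\sum_{i=1}^{n}e_if_{2n+1-i}-f_ie_{2n+1-i}$. I would start from the full flag: by Atiyah's correspondence an element of $LSp(n,\bbc)/B_{LSp(n,\bbc)}$ is a rank $2n$ bundle $E$ on $\bbp^1$ with a nondegenerate antisymmetric form, trivialised symplectically over $|z|\geq 1$, carrying at $z=0$ an isotropic--coisotropic flag $E_0^1\subset\dots\subset E_0^n\subset E_0^{n+1}\subset\dots\subset E_0^{2n-1}$ with $E_0^n$ Lagrangian and $E_0^{n+k}=(E_0^{n-k})^\perp$. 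As before, set
\[
E^{i,j}=\{\,s\mid \mathrm{order}(s)=-i,\ \mathrm{leading\ term}\in E_0^j\,\}
\]
for $i\in\bbz$ and $j$ an index of the flag, together with the fully twisted members $E^{i,2n}$; the reduction to $B_{Sp}$ over $z=0$ is exactly what makes these sheaves well defined, and they fit into the chain \ref{symplsequence}.

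The routine verifications then go through as for $Gl(2n,\bbc)$ and $SO(2n,\bbc)$: each $E^{i,j}$ is torsion free, hence a bundle; near the origin $z^kE^{i,j}=E^{i-k,j}$; the successive quotients along \ref{symplsequence} are the skyscraper sheaves on $z=0$ prescribed by the jumps of the flag, hence identical with the $Gl(2n,\bbc)$ answer; and the symplectic trivialisation over $|z|\geq 1$ restricts to compatible symplectic trivialisations of every $E^{i,j}$. The antisymmetric form on $E$ induces on $E^{i,j}$, in the adapted basis $z^{-i}e_1,\dots,z^{-i}e_j,z^{-i+1}e_{j+1},\dots,z^{-i+1}e_{2n}$ near the origin, a form whose nonzero entries are $z^{-2i}$, $z^{-2i+1}$ or $z^{-2i+2}$ according as both paired basis vectors lie in $E_0^j$, exactly one does, or neither does; this is the symplectic version of \ref{quadform}, and compatibility of these forms under the inclusions is built into the construction.

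Passing from the Borel to the parabolic attached to $\tilde\Delta_0$ then deletes, periodically in \ref{symplsequence}, the subspaces $E_0^{j_\ell}$ with $j_\ell\in\tilde\Delta_0$ together with their coisotropic partners $E_0^{2n-j_\ell}=(E_0^{j_\ell})^\perp$, yielding a periodic set of deletions $D_0$; one checks as in the orthogonal case that distinct $\tilde\Delta_0$ give genuinely distinct flag manifolds despite the redundancy in \ref{symplsequence} (the Lagrangian $E_0^n$ and its neighbours jointly fix $E_0^{n-1}$ and $E_0^{n+1}$). Since the $C_n$ Dynkin diagram has no symmetry, there is no outer automorphism coming from $\gs\gp(2n,\bbc)$ itself; the only one is the symmetry of the extended diagram of Figure \ref{fig:ddcl1}, which in the infinite--matrix picture is the shift $(i,j)\mapsto(i+n,j+n)$ along the diagonal. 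Reading this through the dictionary above shows it carries $E^{i,j}$ to $E^{i,j+n}$ when $j\leq n$ and to $E^{i+1,j-n}$ when $j>n$, which is the asserted Hecke transformation.

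The step I expect to be the real obstacle is the sign bookkeeping intrinsic to the antisymmetric pairing. Unlike the orthogonal case, the perpendicularity relations $E_0^{n+k}=(E_0^{n-k})^\perp$ and the induced forms on the $E^{i,j}$ carry signs depending on whether an index is $\leq n$ or $>n$ (the $-f_ie_{2n+1-i}$ term in $\langle e,f\rangle$), and one must track these to see both that the twisted forms are compatible with all the inclusions and that the shift automorphism --- realised by conjugation by a block--antidiagonal matrix with $z^{\pm 1/2}$ entries, hence a priori defined only up to the branch of $z^{1/2}$ --- descends to a single--valued map on the flag manifold matching the index shift exactly. Once the signs are organised, everything else is a transcription of the $SO(2n,\bbc)$ discussion.
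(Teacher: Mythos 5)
Your proposal is correct and follows essentially the same route as the paper, which itself offers no explicit proof but simply declares the symplectic case ``essentially similar'' to the even orthogonal one: you transcribe that discussion, defining the sheaves $E^{i,j}$ from the isotropic--coisotropic flag, noting that the chain \ref{symplsequence} is unbranched because there is a single Lagrangian $E_0^n$ rather than two families $E_0^{n,\pm}$, and identifying the unique Hecke transform with the shift $(i,j)\mapsto(i+n,j+n)$ coming from the lone symmetry of the extended $C_n$ diagram. Your closing remark about tracking the signs of the antisymmetric pairing and the branch of $z^{1/2}$ is a fair flag of the one place where the transcription is not purely mechanical, and it does not affect the argument.
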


\section{Birkhoff, Bruhat, Borel-Weil}

\noindent {\it i)  The Birkhoff and Bruhat stratifications.}
 Just as for finite dimensional groups, we have an action of the unipotent radical $N^-_{LG}$ opposite to $B_{LG}$
on $LG/ P_{ LG}$; this action sweeps out contractible sets $\Sigma_w$ of finite codimension, the Birkhoff strata. The unipotent radical 
$N_{LG}$ of $B_{LG}$, in turn, sweeps out cells $C_w$ of finite dimension. The two sets $\Sigma_w$ and $C_w$ are transverse, and meet at a point.

The cells are indexed by elements $w$ of the Weyl group of $\widehat{LG}$, also known as the affine Weyl group of $G$. It is a semi-direct product $W_{\aff} =W\times \check T$, where $W$ is the Weyl group of $G$, and $\check T$ the lattice of homomorphisms $\bbc^*\rightarrow T$, where $T$ is a maximal torus of $G$. It is generated by the simple root reflections 
$\sigma_i$ for $i=0, 1, \dots, r$.  Again just as in the finite case we can define the length of an element $w \in  W_{\aff}$, $\ell(w)$, as the minimal number of simple root reflections in a  minimal presentation of $w$. 

For ${P_{LG}}$ a parabolic subgroup corresponding to a set of simple roots $\triangle_0$ we can define the subgroup $W_{P_{LG}} \subset W_{\aff}$ generated by the 
root reflections for simple roots in $\triangle_0$. This is the Weyl group of the semi-simple Levi factor of ${P_{LG}}$.  Inspection of the extended Dynkin diagrams show that this is always a finite dimensional group and hence 
$W_{P_{LG}}$ is finite.  There is a unique element in each right 
coset of $W_{P_{LG}} $ in $W_{\aff}$ of minimal length. This defines a subset $W^{P_{LG}}$ called the Hasse diagram of ${P_{LG}}$.  Just as in case of finite-dimensional semisimple Lie algebras we have that every $w \in W_{\aff}$ can be uniquely decomposed as $w = w_1 w_2$ with $w_1 \in W^{P_{LG}}$ and 
$w_2 \in W_{P_{LG}}$ and $\ell(w) =  \ell(w_1) + \ell(w_2)$.  In the finite case these results are proved in for example \cite{Hum}
and the extension to the infinite, or affine, case also follows from the discussions therein.  If ${P_{LG}}$ is not exotic and corresponds to a finite parabolic $Q$ then $W^{P_{LG}} = W^Q \times \check T$.  We need one final result which can 
be proved using the results in \cite{Hum}. That is that if $w \in W^{P_{LG}}$ then we have
$$
\{ \b > 0 \mid w(\b) < 0  \} \subset \cR(u),
$$
where $\cR(u)$ are the roots of the root spaces occuring in the Lie algebra of $u$ of the unipotent 
subalgebra of $P_{LG}$.

Since $W_{\aff}$ is the quotient of the normaliser  $N(\bbc^*\times T)$ of the torus $\bbc^*\times T$ of $\tilde LG$  by $\bbc^*\times T$, it embeds well into the quotient $\tilde LG/P_{LG}$. Our contractible open sets will then be orbits of the elements $w$ of $W_{\aff}^P$.

Before stating the next theorem recall that we have:
\begin{align*}
P_{LG} &= Levi_{LG} U_{LG}, \\
B_{LG} &= T^c N_{LG}, \\
\end{align*}
as well as an embedding (cf \cite{ PS}) dense decomposition of 
$$N^-_{LG} B_{LG} \subset LG$$
as a dense open set.

\begin{theorem}(cf \cite{ PS}, p.144. for the case when $P$ is a Borel.)
 \label{birkhoff-strata}

(i) The complex manifold $LG/P_{LG}$ is the union of strata $\Sigma_w$ indexed by elements $w$ of $W^{P_{LG}}$.

(ii) The stratum $\Sigma_w$ is the orbit of $w$ under $N^-_{LG}$, and is a locally closed contractible complex submanifold of $LG/P_{LG}$ of codimension
$\ell(w)$. It is diffeomorphic to $U^w_{LG} = N^-_{LG}\cap wU^-_{LG}w^{-1}$.

(iii) In particular $\Sigma_1$ is an open dense subset of $G/P_{LG}$ which is acted on freely and transitively 
by $U_{LG}^-$. 

(iv) Let $A_w = N_{LG}\cap w U^-_{LG}w^{-1}$. The orbit of $w$ under $A_w$ is a complex cell $C_w$ of dimension $\ell(w)$ which intersects $\Sigma_w$ transversally at $w$.

\end{theorem}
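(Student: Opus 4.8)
The plan is to deduce the theorem from the Borel case --- which is \cite{PS}, the content explicitly invoked in the statement --- by transporting it along the projection $\pi\colon LG/B_{LG}\to LG/P_{LG}$, feeding in the combinatorial facts about $W_{\aff}$, $W_{P_{LG}}$ and $W^{P_{LG}}$ recalled just above. For part (i) I would refine the Birkhoff factorisation $LG=\bigsqcup_{w\in W_{\aff}}N^-_{LG}\,w\,B_{LG}$ (with $N^-_{LG}B_{LG}$ dense and open) using the Bruhat decomposition $P_{LG}=\bigsqcup_{v\in W_{P_{LG}}}B_{LG}\,v\,B_{LG}$ of the Levi and the unique length-additive factorisation $w=w_1v$ with $w_1\in W^{P_{LG}}$, $v\in W_{P_{LG}}$: the standard rule for multiplying Birkhoff cells when lengths add gives $N^-_{LG}\,w_1\,B_{LG}\,v\,B_{LG}=N^-_{LG}\,w_1v\,B_{LG}$, whence $N^-_{LG}\,w_1\,P_{LG}=\bigsqcup_{v\in W_{P_{LG}}}N^-_{LG}\,w_1v\,B_{LG}$, and since $(w_1,v)\mapsto w_1v$ is a bijection onto $W_{\aff}$ these sets are pairwise disjoint and cover $LG$. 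Dividing by $P_{LG}$ yields $LG/P_{LG}=\bigsqcup_{w\in W^{P_{LG}}}\Sigma_w$ with $\Sigma_w:=N^-_{LG}\,w\,P_{LG}/P_{LG}$, which is (i); local closedness of each $\Sigma_w$ is part of the Borel content of \cite{PS}, carried over since $\Sigma_w$ is a single $N^-_{LG}$-orbit.

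Next, for (ii) and (iii), fix $w\in W^{P_{LG}}$. The negative roots $\gamma$ split according to whether $w^{-1}\gamma$ lies in $\cR(\gu^-_{LG})$ or in $\cR(\gp_{LG})$, giving $\gn^-_{LG}=(\gn^-_{LG}\cap w\gu^-_{LG}w^{-1})\oplus(\gn^-_{LG}\cap w\gp_{LG}w^{-1})$; exactly as in the Borel case this integrates to a factorisation $N^-_{LG}=U^w_{LG}\cdot(N^-_{LG}\cap wP_{LG}w^{-1})$ with $U^w_{LG}=N^-_{LG}\cap w\,U^-_{LG}\,w^{-1}$, so the orbit map $n\mapsto n\cdot wP_{LG}$ identifies $\Sigma_w$ with the pro-unipotent, hence contractible, group $U^w_{LG}$. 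Since $U^w_{LG}\subset wU^-_{LG}w^{-1}$, the stratum $\Sigma_w$ sits inside the open chart $w\cdot\Sigma_1\cong w\gu^-_{LG}w^{-1}$ around $wP_{LG}$ (the $w$-translate of the big cell $\Sigma_1$), whose exponential coordinates are indexed by the roots $\{\epsilon\mid w^{-1}\epsilon\in\cR(\gu^-_{LG})\}$, and inside this chart $\Sigma_w$ is precisely the coordinate subspace spanned by those $\epsilon$ that are negative; so $\codim\Sigma_w=\#\{\epsilon>0\mid w^{-1}\epsilon\in\cR(\gu^-_{LG})\}$. Applying the displayed inclusion $\{\beta>0\mid w(\beta)<0\}\subset\cR(\gu_{LG})$ to $\beta=-w^{-1}\epsilon$ shows that any $\epsilon>0$ with $w^{-1}\epsilon<0$ automatically has $w^{-1}\epsilon\in\cR(\gu^-_{LG})$, so $\codim\Sigma_w=\#\{\epsilon>0\mid w^{-1}\epsilon<0\}=\ell(w)$, which is (ii). For (iii) take $w=1$: then $\Sigma_1=N^-_{LG}P_{LG}/P_{LG}$ has codimension $0$, hence is open, and is dense, being the image of the dense open $N^-_{LG}B_{LG}$; the factorisation now reads $N^-_{LG}=U^-_{LG}\cdot(N^-_{LG}\cap P_{LG})$ with $U^-_{LG}\cap P_{LG}=\{1\}$, exhibiting the free transitive $U^-_{LG}$-action.

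For (iv), put $A_w=N_{LG}\cap w\,U^-_{LG}\,w^{-1}$, a pro-unipotent subgroup with root set $\{\gamma>0\mid w^{-1}\gamma\in\cR(\gu^-_{LG})\}$. It too lies in $wU^-_{LG}w^{-1}$, hence in the chart $w\cdot\Sigma_1$; and because the stabiliser $A_w\cap wP_{LG}w^{-1}$ is trivial (as $\cR(\gu^-_{LG})\cap\cR(\gp_{LG})=\emptyset$), the orbit $C_w$ of $wP_{LG}$ is exactly the coordinate subspace of $w\cdot\Sigma_1$ spanned by the positive $\epsilon$ with $w^{-1}\epsilon\in\cR(\gu^-_{LG})$ --- complementary inside $w\cdot\Sigma_1$ to $\Sigma_w$. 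Hence $C_w$ is a cell that meets $\Sigma_w$ transversally and only at $wP_{LG}$, of dimension $\#\{\gamma>0\mid w^{-1}\gamma\in\cR(\gu^-_{LG})\}=\ell(w)$ by the same count as in (ii).

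The combinatorics above is bookkeeping once the Borel case and the lemma on $W^{P_{LG}}$ are granted; the genuine work --- and the reason the statement cites \cite{PS} --- is analytic: the infinite-dimensional manifold structures on $LG$ and $LG/P_{LG}$, the integration of the root-space splittings into honest factorisations of $N^-_{LG}$ with the resulting slice charts $w\cdot\Sigma_1$, the local closedness of the strata, and their finite codimension together with the contractibility of $U^w_{LG}$. My reduction is designed precisely so that the general parabolic case needs nothing beyond that input plus the recalled Weyl-group facts.
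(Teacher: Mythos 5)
Your overall architecture and conclusions match the paper's: both arguments run on the Pressley--Segal factorisation $LG=\bigcup_{w}N^-_{LG}wB_{LG}$, the length-additive decomposition $w=w_1w_2$ through the Hasse diagram $W^{P_{LG}}$, and the lemma $\{\beta>0\mid w(\beta)<0\}\subset\cR(\gu)$; your treatment of (iii) and (iv) is the paper's, and your direct root count for the codimension in (ii), namely $\#\{\epsilon>0\mid w^{-1}\epsilon\in\cR(\gu^-_{LG})\}=\ell(w)$, is exactly the count the paper performs for $\dim C_w$ in (iv) (the paper instead gets the codimension of $\Sigma_w$ by comparing with the unique dense orbit in the fibre of $LG/B_{LG}\to LG/P_{LG}$ over it). So the substance is the same, with one reorganisation that is harmless.

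There is, however, one step in your proof of (i) that is actually false as stated: the ``standard rule for multiplying cells when lengths add'' is the \emph{Bruhat} rule $B w_1 B\cdot B v B=B w_1 v B$, and it does not transfer to mixed Birkhoff--Bruhat products. Already for $SL(2,\CC)$ with $w_1=1$ and $v=s$ the simple reflection (lengths add), one computes $N^- B s B=G$, whereas $N^- s B$ is the codimension-one stratum (a single point of $G/B$); in the Birkhoff setting the correct product rule goes the other way, giving $N^- w B\cdot B s B= N^- w B\cup N^- w s B$ when $\ell(ws)>\ell(w)$. Your intended conclusion $N^-_{LG}w_1P_{LG}=\bigsqcup_{v\in W_{P_{LG}}}N^-_{LG}w_1vB_{LG}$ is nonetheless true (one can rescue it from the corrected rule because $W_{P_{LG}}$ is closed downward in Bruhat order), but the clean way to get it --- and the one the paper uses --- is to push the Borel-level decomposition down the fibration $LG/B_{LG}\to LG/P_{LG}$ and observe that two strata $N^-_{LG}wB_{LG}$, $N^-_{LG}w'B_{LG}$ have the same image exactly when $w,w'$ lie in the same coset of $W_{P_{LG}}$, so the images are indexed by $W^{P_{LG}}$. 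You should replace the product-rule step by this projection argument; the rest of your proof then stands.
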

\begin{proof}
The proof proceeds as in the finite-dimensional case and the case where $P_{LG}$ is a Borel
as discussed in \cite{PS}. In particular we note from \cite{PS} the factorisation theorem:
\begin{equation}\label{bruhat-borel}
LG = \bigcup_{w \in W_{\aff}} N^-_{LG} w B_{LG}. 
\end{equation}

(i) There is a projection map 
$$LG/B_{LG}\rightarrow LG/P_{LG}$$ 
whose fibre is the finite dimensional full flag manifold of the Levi factor.  The decomposition \ref{bruhat-borel} projects to a stratification of $LG/P_{LG}$, and as the elements $w, w'$ of $W_{\aff}$ give the same stratum if and if only $w'w^{-1}$ lies in $W_{P_{LG}}$, the images are classified by the elements of the Hasse diagram $W^{P_{LG}}$. They are also disjoint. Note that the orbit corresponding to $w=1$ is dense.

(ii) As the isotropy subgroup of the ${LG}$ action at
$wP_{LG}$ is $wP_{LG}w^{-1}$ it follows that the action of $U^w_{LG} = N^-_{LG}\cap wU^-_{LG}w^{-1}$ is free, so that  $\Sigma_w $
is diffeomorphic to $U^w_{LG}$. (iii) then follows for $w=1$. To get the codimension result, we note that the orbits in $LG/B_{LG}$ projecting to $N^-_{LG}w P_{LG}$ correspond to the elements in the coset $wW_{P_{LG}}$; of these, one is the dense orbit corresponding to $w\in W^{P_{LG}}$ (as $w$ is of minimal length), and the codimension for this orbit below is the same as its codimension above, that is $\ell(w)$, by the result on \cite{PS}. 

(iv) The intersection of $C_w$ and $\Sigma_w$ will be the orbit of $wP_{LG}$ under the group 
$$
A_w \cap U^w_{LG} = N_{LG}\cap w U^-_{LG}w^{-1} \cap N^-_{LG}\cap wU^-_{LG}w^{-1} = \{ 1 \}.
$$
A similar calculation with Lie algebras shows that the tangent spaces to $C_w$ and $\Sigma_w$ at $wP_{LG}$ have zero intersection so the  intersection is transversal, and so its dimension is the codimension of $\Sigma_w$. 

To calculate the dimension of $C_w$, in an alternate way,  it is sufficient to calculate the dimension of $N_{LG}\cap w U^-_{LG}w^{-1}$
or the dimension of $\gn \cap w \gu^- w^{-1}$.  As this can be decomposed into root spaces we just need the cardinality of 
$$ 
\{ \a > 0 \mid w^{-1}(\a)  \in \cR(\gu^-) \}.
$$
If we map $\a$ to $\b = -w^{-1}(\a)$ we see that this is the same 
as the cardinality of 
$$
\{ \b > 0 \mid w(\b) < 0 \quad\text{and}\quad \b \in \cR(\gu)\} . 
$$
But we have already noted that for $w \in W^{P_{LG}}$ we have
$$
\{ \b > 0 \mid w(\b) < 0  \} \subset \cR(u)
$$
so that 
$$
\{ \b > 0 \mid w(\b) < 0 \quad\text{and}\quad \b \in \cR(\gu) \}
= \{ \b > 0 \mid w(\b) < 0  \}
$$
and thus has cardinality $\ell(w)$. 


\end{proof}

We remark that in the case of $P_{LG} = LG^+$, the cell $C_w, w\in (\check T)^+$ is a versal deformation space for deforming the $G$-bundles on $\bbp^1$ given by $w$; here $(\check T)^+$ is the positive Weyl chamber in $\check T$.

\bigskip

\noindent{\it ii) The Borel-Weil theorem.}
The classical Borel-Weil theorem has an analog for loop groups \cite{PS}. Let $\lambda\in \Lambda_{LP}$ be a fundamental lowest weight, with $\bbc_\lambda$ the corresponding weight space. It is a representation of the torus $\bbt\times T$ in $\tilde LG$, and extends trivially to
$ P_{\tilde LG}$.  One can build the homogeneous line bundle over $\tilde LG/P_{\tilde LG}$
\begin{equation}
L_\lambda = \tilde LG\times_{P_{\tilde LG}}\bbc_\lambda.
\end{equation}
The space of sections of $L_\lambda$ is a representation $V_\lambda$ of $\tilde LG$, with lowest weight space $\bbc_\lambda$. These basic lowest weight sections are  defined by
\begin{align}
\tilde LG/P_{\tilde LG}\times \bbc_\lambda &\rightarrow \tilde LG\times_{P_{\tilde LG}}\bbc_\lambda\cr
(g, e)&\mapsto (g, \pi(g^{-1}\cdot e)).
\end{align}
Here $\pi:V_\lambda\rightarrow\bbc_\lambda$ is the projection.
 The basic sections in $\bbc_\lambda$ vanish on a divisor $Z_\lambda$. We choose a basis element $v_\lambda\in \bbc_\lambda$. 
We set  $n(P)+1$ to be the cardinality of $\Lambda_{LP}$.

The group $U_{LG}$ preserves the divisors $Z_\lambda$; indeed, $Z_\lambda$ is the union of the codimension one Birkhoff stratum $\Sigma_{w(\lambda)}$, and of other, higher codimensions cells $\Sigma_{w'}$ contained in the closure of $\Sigma_{w(\lambda)}$. Here $w(\lambda)$ is the reflection in the simple root plane corresponding to $\lambda$. One has a decomposition of $LG/P_{LG}$ as the union of the big cell which is  a free orbit of $N_{LG}$, and the divisors $Z_\lambda$. We set 
\begin{equation}
Z= \cup_{\lambda\in \Lambda_P}Z_\lambda.
\end{equation}

From this, one has (\cite{PS}):

\begin{proposition}
The divisors $Z_\lambda$, as $\lambda$ varies in $\Lambda_{LP}$, span freely the second homology group $H^2(\tilde LG/P_{\tilde LG}, \bbz)\simeq \bbz^{n(P)+1}$.
\end{proposition}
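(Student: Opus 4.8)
The plan is to read $H^2$ off the Bruhat cell decomposition of Theorem~\ref{birkhoff-strata} and then pin down the divisor classes by intersecting them against a dual family of curves. First I would note that the cells $C_w$, $w \in W^{P_{LG}}$, of Theorem~\ref{birkhoff-strata}(iv) are all even-(real-)dimensional, and that, writing $\tilde LG/P_{\tilde LG}$ as an increasing union of finite-dimensional projective Schubert varieties (cf.\ \cite{PS}), they assemble into a CW-type decomposition whose cellular chain complex has vanishing differential. Hence $H_{2k}(\tilde LG/P_{\tilde LG},\bbz)$ is free of rank $\#\{w \in W^{P_{LG}} : \ell(w)=k\}$, and the odd homology vanishes; in particular $H_1=0$, so by universal coefficients $H^2 \cong \Hom(H_2,\bbz)$ is free of rank $\#\{w \in W^{P_{LG}} : \ell(w)=1\}$. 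The length-one elements of $W_{\aff}$ are exactly the simple reflections $\sigma_0,\dots,\sigma_r$, and $\sigma_i$ lies in the Hasse diagram $W^{P_{LG}}$ iff $\sigma_i \notin W_{P_{LG}}$, i.e.\ iff $\alpha_i \notin \triangle_0$ (if $\sigma_i\in W_{P_{LG}}$ the coset already contains $1$; otherwise $\sigma_i W_{P_{LG}}$ contains no element of length $0$, so $\sigma_i$, of length $1$, is a minimal representative). Since the simple roots outside $\triangle_0$ are precisely the crossed nodes, which by construction index the fundamental lowest weights in $\Lambda_{LP}$, this count is $|\Lambda_{LP}|=n(P)+1$, giving the asserted $H^2(\tilde LG/P_{\tilde LG},\bbz)\simeq\bbz^{n(P)+1}$.

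For the second assertion I would exhibit the $[Z_\lambda]$ as the basis of $H^2$ dual to an explicit basis of $H_2$. By the same cell count $H_2$ is free with basis the Schubert curve classes $[\overline{C_{\sigma_i}}]$, one for each $\alpha_i\notin\triangle_0$; each $\overline{C_{\sigma_i}}$ is a rational curve, the image in $\tilde LG/P_{\tilde LG}$ of $SL(2)_{(i)}/B$ for the minimal $SL(2)$ attached to $\alpha_i$. The divisor $Z_\lambda$ is the zero locus of a section of the homogeneous line bundle $L_\lambda$, so $[Z_\lambda]=c_1(L_\lambda)$ and the Kronecker pairing is
$$
\langle [Z_\lambda],\, [\overline{C_{\sigma_i}}]\rangle \;=\; \deg\!\bigl(L_\lambda|_{\overline{C_{\sigma_i}}}\bigr) \;=\; \pm\langle \lambda, \alpha_i^\vee\rangle ,
$$
the last step being the standard computation of a line-bundle degree on $SL(2)_{(i)}/B\cong\bbp^1$ (the sign depending only on the lowest- versus highest-weight convention). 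If $\alpha_{i(\lambda)}\notin\triangle_0$ is the simple root attached to $\lambda$, so that $w(\lambda)=\sigma_{i(\lambda)}$, then $\lambda$ is the fundamental weight dual to $\alpha_{i(\lambda)}^\vee$, whence $\langle\lambda,\alpha_i^\vee\rangle=\delta_{i,i(\lambda)}$. Thus the matrix of the pairing between $\{[Z_\lambda]\}_{\lambda\in\Lambda_{LP}}$ and the basis $\{[\overline{C_{\sigma_i}}]\}_{\alpha_i\notin\triangle_0}$ is $\pm$ the identity, hence unimodular. Since $H_2$ is free of finite rank and $H_1=0$, the Kronecker pairing $H^2\times H_2\to\bbz$ is perfect, so $\{[Z_\lambda]\}_{\lambda\in\Lambda_{LP}}$ is (up to sign) the dual basis, in particular a free $\bbz$-basis of $H^2(\tilde LG/P_{\tilde LG},\bbz)$.

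Everything here beyond a little infinite-dimensional bookkeeping is standard for Kac--Moody flag varieties and is contained in \cite{PS}; in particular Theorem~\ref{birkhoff-strata} already supplies the cell structure, so the homology computation is formal once one knows the cells are even-dimensional. I expect the only real content — and thus the step to check with care — to be the degree formula $\deg(L_\lambda|_{\overline{C_{\sigma_i}}}) = \pm\langle\lambda,\alpha_i^\vee\rangle$: one must identify $\overline{C_{\sigma_i}}$ with the rational curve coming from the $i$-th minimal $SL(2)\subset\tilde LG$ (including for the affine node $\alpha_0=\delta-\theta$) and restrict the lowest-weight line bundle $L_\lambda$ there. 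Any residual ambiguity in signs, or arising from the $d$-direction of $\tilde LG$, only turns the identity pairing matrix into $\pm$ the identity, so it leaves the conclusion untouched.
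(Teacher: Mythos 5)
Your argument is correct, and it fills in considerably more detail than the paper, which essentially defers to Pressley--Segal after observing that each $Z_\lambda$ is the closure of the codimension-one Birkhoff stratum $\Sigma_{w(\lambda)}$ and that the big cell is contractible. The two routes are dual to one another: the paper reads the cohomology classes directly off the finite-codimension Birkhoff strata of Theorem \ref{birkhoff-strata}, identifying $[Z_\lambda]$ with the class dual to the closure of $\Sigma_{w(\lambda)}$, whereas you compute $H_2$ from the finite-dimensional Bruhat cells $C_w$ and then pin down the $[Z_\lambda]$ by the unimodular intersection pairing $\deg(L_\lambda|_{\overline{C_{\sigma_i}}})=\langle\lambda,\alpha_i^\vee\rangle=\delta_{i,i(\lambda)}$ against the Schubert curves. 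What your version buys is an actual verification of freeness and of the ``span'' statement, rather than an appeal to the identification of divisors with strata closures; what it costs is the extra bookkeeping you rightly flag, namely realising the flag manifold as an increasing union of finite-dimensional Schubert varieties and making sense of the fundamental weights (in particular $\lambda_0$ at the affine node, where the level/central-extension direction enters) so that the pairing with $\alpha_i^\vee$ is the standard one. Your coset argument identifying the length-one elements of $W^{P_{LG}}$ with the crossed nodes, and hence matching the count $n(P)+1=|\Lambda_{LP}|$, is exactly the combinatorial content implicit in the paper's indexing of the $Z_\lambda$ by codimension-one strata.
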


\begin{proposition}
Based maps $F$ from a Riemann surface $\Sigma$ into $LG/P_{LG}$ are classified up to homotopy by a multi-degree ${\bf k} = (k_1,..,k_{n(P)})$ of non-negative numbers, given by the intersection multiplicity of $F(\Sigma)$ with the $Z_{\lambda}$, or alternately, by the first Chern classes of the pullbacks $F^*(L_\lambda)$.
\end{proposition}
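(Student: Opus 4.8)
The plan is to reduce the classification of based maps $\Sigma \to LG/P_{LG}$ to the computation of homotopy classes $[\Sigma, LG/P_{LG}]_*$ and then identify these with intersection data. First I would note that $LG/P_{LG}$ is simply connected (it is a homogeneous space of a connected group with connected isotropy, and in fact it is a union of even-dimensional cells, since each Birkhoff stratum $\Sigma_w$ is a \emph{complex} submanifold by Theorem \ref{birkhoff-strata}, so it has a CW structure with cells only in even real dimension). Hence based homotopy classes of maps from $\Sigma$ are controlled by the $2$-skeleton: a based map from a surface is determined up to homotopy by its effect on $H_2$, i.e. by the element of $H_2(LG/P_{LG}, \bbz)$ it represents, or dually by the pullback of $H^2$. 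This is the standard argument: obstruction theory to homotoping two maps agreeing on the $1$-skeleton lives in $H^i(\Sigma; \pi_i(LG/P_{LG}))$, which for a surface only matters for $i=1,2$, and $\pi_1 = 0$, $\pi_2 \cong H_2$ by Hurewicz.

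Next I would pin down the rank. By the Borel-Weil discussion above, $H^2(\tilde LG/P_{\tilde LG}, \bbz) \cong \bbz^{n(P)+1}$, generated freely by the divisor classes $[Z_\lambda]$ as $\lambda$ runs over $\Lambda_{LP}$. The passage from $\tilde LG/P_{\tilde LG}$ to $LG/P_{LG}$ drops the extra $\bbc^*$-factor $\bbt$, which kills exactly one generator (the one dual to the affine simple direction $\alpha_0$), leaving $H^2(LG/P_{LG},\bbz) \cong \bbz^{n(P)}$; correspondingly $H_2(LG/P_{LG},\bbz)$ is free of the same rank, with basis the classes $[C_{w(\lambda)}]$ of the codimension-one Birkhoff strata's transverse cells (equivalently, the $\PP^1$'s in the root $SU(2)$'s). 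So a based map $F$ determines the $n(P)$ integers $k_i = F_*[\Sigma] \cdot [Z_{\lambda_i}]$, or equivalently $k_i = \langle c_1(F^* L_{\lambda_i}), [\Sigma]\rangle$, these two being equal because $c_1(L_\lambda)$ is Poincaré dual to $Z_\lambda$.

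Then I would show the $k_i$ are a complete invariant and that they are non-negative. Completeness is the obstruction-theory statement above: two based maps with the same image in $H_2$ (equivalently the same $k_i$, since the $[Z_{\lambda_i}]$ form a dual basis) are based-homotopic. Realizability of any tuple, and non-negativity, comes from positivity: each $L_\lambda$ is an ample (or at least nef) homogeneous line bundle on $LG/P_{LG}$ — it is the pullback of $\mathcal O(1)$ under a Borel-Weil projective embedding into $\PP(V_\lambda^*)$ — so for a \emph{holomorphic} map $F$ the degree $\langle c_1(F^*L_\lambda),[\Sigma]\rangle$ is the intersection number with the effective divisor $Z_\lambda$, hence $\geq 0$; and since we are classifying up to homotopy it suffices to produce one holomorphic representative per tuple, e.g. by mapping into a product of the rational-curve factors (the $\PP^1 = $ closure of $C_{w(\lambda)}$) with prescribed degrees and composing with the inclusions.

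The main obstacle I anticipate is the bookkeeping of which generator is lost in passing from $\tilde LG/P_{\tilde LG}$ to $LG/P_{LG}$ and checking that $H^2$ really is free abelian of rank exactly $n(P)$ with the $[Z_{\lambda}]$, $\lambda \in \Lambda_{LP}$, as basis — i.e. handling the exotic-parabolic cases uniformly and making sure the identification of $c_1(L_\lambda)$ with $[Z_\lambda]$ (stated above via the vanishing-divisor description of the basic sections) is used correctly. The homotopy-theoretic half is routine once one knows $LG/P_{LG}$ has only even cells and $\pi_1=0$; the subtlety is purely in the rank count and the sign/positivity conventions that force the $k_i\geq 0$.
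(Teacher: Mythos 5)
Your overall strategy is sound and is essentially the standard argument that the paper leaves implicit (it states this proposition without proof, attributing the surrounding facts to Pressley--Segal): simple connectivity plus the even-cell Bruhat decomposition reduce based homotopy classes of maps from a surface to $H^2(\Sigma;\pi_2)\cong\pi_2\cong H_2$, the classes $[Z_\lambda]$ give a basis dual to the curve classes $[\overline{C_{w(\lambda)}}]$, and positivity of the homogeneous line bundles $L_\lambda$ forces the degrees of holomorphic maps to be non-negative.

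However, your rank count contains a genuine error. You assert that passing from $\tilde LG/P_{\tilde LG}$ to $LG/P_{LG}$ drops the rotation factor $\bbt$ and thereby kills exactly one generator, leaving $H^2\cong\bbz^{n(P)}$. In fact the two homogeneous spaces are the \emph{same} space: the rotation $\bbc^*$ sits inside $P_{\tilde LG}$ (every parabolic subalgebra constructed in Section 2 contains $\CC d$), so $\tilde LG/P_{\tilde LG}=LG/P_{LG}$ and no generator is lost. Thus $H^2(LG/P_{LG},\bbz)\cong\bbz^{n(P)+1}$, freely spanned by \emph{all} of the $[Z_\lambda]$, $\lambda\in\Lambda_{LP}$. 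Your own description of the dual basis already betrays this: the codimension-one Birkhoff strata are indexed by the crossed-out simple roots, of which there are $|\Lambda_{LP}|=n(P)+1$, not $n(P)$. The discrepancy you were trying to repair is an indexing slip in the statement of the proposition itself; elsewhere the paper writes the multi-degree as $(k_0,k_1,\dots,k_{n(P)})$ with $n(P)+1$ entries (see the pole-location maps $\pi_0,\dots,\pi_{n(P)}$ and $|{\mathbf k}|=\sum_{i=0}^{n(P)}k_i$). A sanity check: for $P_{LG}=LG^+$ one has $n(P)=0$ and $LG/LG^+\simeq\Omega G$ with $H^2\cong\bbz$ carrying the single instanton charge, whereas your count would give $H^2=0$; for $P_{LG}=B_{LSL(2,\bbc)}$ one has $n(P)=1$ and calorons carry two charges $(k_0,k_1)$. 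With the rank corrected to $n(P)+1$, the remainder of your argument goes through.
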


Let $P_{LG}$ be a parabolic subgroup coresponding to a parabolic of $G$. Then $n(P)-1$ of the degrees of maps to $LG/P_{LG}$ correspond to degrees of maps into $G/P$, in the following fashion. For each  $w_i$  generator of the Hasse diagram of $G/P$, then there is a corresponding divisor $Z_{w_i}$, the closure of  $Bw_iP$ in $G/P$, such that the degree $(j_1,...,j_{n(P)-1})$ of a map of a Riemann surface into counts the intersection number of the image with the divisors $(Z_{w_1},...,Z_{w_{n(P)-1}})$. Corresponding to these, the degrees $(k_1,..,k_{n(P)-1})$ of a based map of the Riemann surface into can be defined as follows: 
 a based map from $\Sigma$ into  $LG/P_{LG}$  corresponds to a holomorphic bundle on $\Sigma\times \bbp^1$, trivialised on $\Sigma\times \{|z|>1\} \cup \{x=\infty\}\times \bbp^1$, and with a   $P$-flag   $F^0$ along $z=0$. The degree $k_i$ counts with multiplicity the number of lines $\{x\}\times \bbp^1$ 
over which there are sections of the associated $G/P$-bundle taking  $F^0$ as value at $z=0$ and lying in $Z_{w_i}$ at $z=\infty$. The final degree $k_{n(P)}$ is simply the degree (second Chern class) of the bundle; It counts, again with suitable multiplicities, the number of lines in the ruling over which the bundle is non-trivial.

Thus, for example, taking $G= Sl(n, \bbc)$ and $P_{LG} = B_{Sl(n,\bbc)}$, a holomorphic map corresponds to a holomorphic vector bundle $E$ on $\Sigma\times \bbp^1$ with trivial volume form, equipped with a flag $E_0^1\subset E_0^2\subset\cdots\subset E_0^n=E_0$ over $z=0$, and, because it is trivialised at $z=\infty$ a (transverse) flag $E_\infty^1\subset E_\infty^2\subset\cdots\subset E_\infty^n=E_\infty$ over $z=\infty$. The degree $k_i$ counts the number of lines in the ruling such that $E$ has a section over that line lying in $E_0^i$ at $z=0$ and $E_\infty^{n-i}$ at $z=\infty$. More properly, one had not only a bundle $E$, but a sequence $E^{i,j}$ ( \ref{sl(n)-sheaves}); one can consider the subsheaf $\hat E^{0,j}$ of $E^{0,j}$ consisting of sections taking value at $z=\infty$ in the subspace $E_\infty^{n-j}$. Let $\pi$ be the projection $\pi:\Sigma\times \bbp^1 \rightarrow \bbp^1$; the direct image sheaf $R^1\pi_*(\hat E^{0,j})$ is supported over a discrete set. Its degree (dimension over $\bbc$) is the $j$th degree $k_j$. Similarly, let $\hat E$ be the subsheaf of $E= E^{0,n}$ of sections vanishing over $z=\infty$; the  degree $k_n$ (second Chern class) of the bundle is the complex dimension of $R^1\pi_*(\hat E)$.

More concretely, if one has an $Sl(n,\bbc)$ vector bundle over $\Sigma\times \bbp^1$ with $c_2 = k$, and a flag $0= E_0\subset E_1\subset E_2\subset...\subset E_{n-1}\subset E_n = E$ over $z=0$, with $c_1(E_i/E_{i-1}) = -j_i$, then the degrees defined above are given by:
\begin{eqnarray}\label{charges}
k_i&=& k+j_1+...+j_i,\quad i=1,..,n-1,\\ k_n &=&  k.
\end{eqnarray}

\section{Principal parts and deformation theory}

\noindent{\it i) The sheaf of principal parts.} The article \cite{BHMM1}, following on previous work \cite{Segal, Kirwan-maps, Gu}, studies the spaces of maps from the Riemann sphere into the finite dimensional flag manifolds; the main conceptual tool for  \cite{BHMM1}  is an idea of  \cite{Gr}, that of the principal part of a holomorphic map into a flag manifold. Using this, it was proven in \cite {BHMM1} that the homotopy type of the space of holomorphic  maps into the flag manifold stabilises to that of the continuous maps as one increases the degree. The same idea can be used to extend the result  from maps to flag manifolds to the more general case of maps to  equivariant compactifications of a solvable Lie group in \cite {BHM}; this general case covers, for example, toric varieties, as well as certain spherical varieties.

We are now interested in maps from a Riemann surface $\Sigma$ into one of the flag manifolds of a loop group. This infinite dimensional case was in fact the prime subject of Gravesen's original article. Just as for finite dimensional groups, we can consider the flag manifold $LG/P_{LG}$ as ``adding an infinity" to the contractible group $N_{LG}$. While we cannot speak of compactification, it is very much as if there was a compactification when one is studying rational maps. 

Just as in \cite{Gr,HuM2, BHMM1} we define a ``sheaf of principal parts" over a Riemann surface $\Sigma$ of maps into $LG/P_{LG}$. We proceed by analogy with maps into   $\bbp^1$, the compactification of $\bbc$. These can be thought of as meromorphic maps into $\bbc$; the principal part of the map at a pole is obtained by quotienting the map by the additive action of holomorphic maps into $\bbc$.  In the same vein, one thinks of maps into $LG/P_{LG}$ as meromorphic maps into  $N_{LG}$; the principal part is obtained by quotienting by the action of holomorphic maps into $N_{LG}$. One thus defines a presheaf, by setting, for each open set $V$,
\begin{equation}
{\mathcal P}r (V) = Hol (V, LG/P_{LG})/Hol(V, N_{LG}).
\end{equation}
This map equips the space of principal parts on $V$ with the quotient topology. Note that because the action is free and transitive over the big open cell in  $LG/P_{LG}$, there is only one principal part (the trivial part) associated to maps with values in this cell.  Sections of the sheaf of principal parts resemble sections of the sheaf of divisors, in that sections are trivial over a Zariski open set. As for divisors, this allows us to add  two sections with disjoint support.
Choosing a base point in the big cell allows us to think of the sheaf of principal parts as being defined by an exact sequence  (of pointed sets)
\begin{equation}\label{ppt-sheaf}
0\rightarrow Hol(V, N_{LG})\rightarrow Hol (V, LG/P_{LG}) \rightarrow {\mathcal P}r (V)\rightarrow 0.\end{equation}

Since the action of $N_{LG}\subset U_{LG}$ fixes the holomorphic sections $v_\lambda$ of the homogeneous line bundles $L_\lambda$, it preserves the divisors $Z_\lambda$, and so each principal part has a well defined (multi)-degree ${\mathbf m}$ in $\bbz^{n(P)}$.

\bigskip
\noindent{\it ii) Rational maps and the sheaf of principal parts.} The spaces $LG/P_{LG}$ admit a ``scaling" equivalence onto the homogeneous space $LG^a/P_{LG}^a$ of loops on the circle $|z|=|a|$, by  associating to the loop $f$ the loop $f^a(z) = f(z/a)$. More generally, there are equivalences $LG^a/P_{LG}^a\rightarrow LG^b/P_{LG}^b $.  However, using the identifications 
\begin{equation}
LG^a/P_{LG}^a={\rm bundles\ (plus\ flags)\ on\ }\bbp^1\ {\rm trivialised\ over\ }|z|\ge |a|,
\end{equation}
there is another map   given by restriction for $|a|\le|b|$:
\begin{equation}
r_{a,b}: LG^a/P_{LG}^a\rightarrow LG^b/P_{LG}^b.
\end{equation}
Let us take a limit, and set 
\begin{align}
LG^\infty/P_{LG}^\infty=&\{ {\rm \ bundles\ (plus\ flags)\ on\ }\bbp^1\ \cr
&\quad {\rm trivialised\ over\ some\ disk\ centred\ at\ } z=\infty\}.
\end{align}

\begin{proposition}
A based map $F$ of $\bbp^1$ into $LG^\infty/P_{LG}^\infty$, with basing condition $F(\infty) = \bbi$, is equivalent to a holomorphic $G$-bundle over $\bbp^1\times\bbp^1$, trivialised over $\{\infty\}\times\bbp^1\cup\bbp^1\times\{\infty\}$, with a reduction to $P$ over $z=0$.
\end{proposition}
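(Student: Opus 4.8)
The plan is to identify the two sides of the claimed equivalence via the scaling and restriction maps already set up, and to show that the based map records exactly the data of a bundle on $\bbp^1\times\bbp^1$ with the stated trivialisation and flag. First I would unwind the definitions: a based holomorphic map $F\colon\bbp^1\to LG^\infty/P_{LG}^\infty$ with $F(\infty)=\bbi$ is, by construction of $LG^\infty/P_{LG}^\infty$ as a colimit, locally a map into some $LG^a/P_{LG}^a$; over a suitably large compact family of points $x\in\bbp^1$ the image lands in a common $LG^a/P_{LG}^a$, so by the proposition of Atiyah (the family version, with $\Sigma=\bbp^1$) the map $F$ restricted to that family corresponds to a $G$-bundle over $\bbp^1\times\bbp^1$ trivialised over $\bbp^1\times\{|z|\ge|a|\}$, together with a reduction to $P$ along $z=0$. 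Using the $\bbc^*$-action in the $z$-direction (scaling $z\mapsto z/a$) one normalises $a$, so that the data is a bundle on $\bbp^1\times\bbp^1$ trivialised over $\bbp^1\times\{|z|\ge 1\}$, with a $P$-reduction along $z=0$.

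The second step is to pass from "trivialised over a tubular neighbourhood $\bbp^1\times\{|z|\ge 1\}$" to "trivialised over $\{\infty\}\times\bbp^1\cup\bbp^1\times\{\infty\}$", which is the content of the third Atiyah proposition quoted above. The point is that a trivialisation over the divisor $\{\infty\}\times\bbp^1\cup\bbp^1\times\{\infty\}$ extends automatically to a neighbourhood of $z=\infty$ (by the cited argument: a trivialisation over $\bbp^1\times\{z=\infty\}\cup\{x=\infty\}\times\bbp^1$ extends to some neighbourhood of $z=\infty$, and the $\bbc^*$-action pulls this back to the standard annulus $|z|\ge 1$); conversely, a trivialisation over $\bbp^1\times\{|z|\ge 1\}$ restricts to one over $\{\infty\}\times\bbp^1\cup\bbp^1\times\{\infty\}$. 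The basing condition $F(\infty)=\bbi$ is precisely what forces the restriction of the bundle and its trivialisation over $\{x=\infty\}\times\bbp^1$ to be the trivial one, matching the trivialisation over $\bbp^1\times\{\infty\}$ along the corner $(\infty,\infty)$. So the two trivialisation data agree up to the action of $LG_+$-valued holomorphic changes, which do not affect the isomorphism class of the geometric object.

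The third step is to check that the correspondence is a genuine bijection (indeed a biholomorphism over compact families, as in the earlier propositions), not merely a surjection of sets. For this I would produce the inverse explicitly: given a bundle $E$ on $\bbp^1\times\bbp^1$ trivialised over $\{\infty\}\times\bbp^1\cup\bbp^1\times\{\infty\}$ with a $P$-reduction along $z=0$, restrict to each line $\{x\}\times\bbp^1$ to get a framed bundle on $\bbp^1$ with a $P$-flag at $z=0$, hence a point of $LG^\infty/P_{LG}^\infty$; holomorphicity in $x$ and the basepoint normalisation give back a based map $F$, and the two constructions are mutually inverse by the family version of Atiyah's correspondence. Functoriality in the parameter (for families of such maps) then upgrades this to the stated equivalence; over compact families the scaling/restriction maps $r_{a,b}$ are biholomorphic onto their image, so the colimit identification is realised biholomorphically, exactly as in the earlier propositions.

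The main obstacle is the limit/colimit bookkeeping in the second paragraph: one must be careful that an arbitrary holomorphic $F\colon\bbp^1\to LG^\infty/P_{LG}^\infty$ does factor through a single $LG^a/P_{LG}^a$ (compactness of $\bbp^1$ and the fact that the maps $r_{a,b}$ are open embeddings make this work, but it needs to be said), and that the resulting bundle's trivialisation domain can be enlarged from a thin annulus to the full divisor $\{\infty\}\times\bbp^1\cup\bbp^1\times\{\infty\}$ without losing or gaining data — this is where the automatic-extension argument and the basing condition $F(\infty)=\bbi$ both have to be invoked simultaneously and shown to be compatible at the corner point $(\infty,\infty)$.
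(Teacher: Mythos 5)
Your proposal is correct and follows essentially the same route as the paper: both directions rest on passing between a trivialisation on a neighbourhood of the lines at infinity and one on the lines themselves, using compactness of $\bbp^1$ to get a uniform annulus and the fact that triviality on a line $\{\infty\}\times\bbp^1$ propagates to $V\times\bbp^1$ for a neighbourhood $V$ of $\infty$. The paper's proof is simply a terser version of your three steps.
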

\begin{proof} It is clear that such a map $F$ is equivalent to a holomorphic $G$-bundle over $\bbp^1\times\bbp^1$, trivialised over $V\times\bbp^1\cup\bbp^1\times\{\infty\}$, for some open set $V$ containing infinity.  However, if a bundle is trivial over $\infty\times\bbp^1$, it is trivial over $V\times\bbp^1$ for some open set $V$; a trivialisation on $\{\infty\}\times\bbp^1\cup\bbp^1\times\{\infty\}$ extends to $V\times\bbp^1\cup\bbp^1\times\{\infty\}$.
\end{proof}

\begin{corollary}
A compact family, parametrised by $K$, of holomorphic $G$-bundles over $\bbp^1\times\bbp^1$, trivialised over $\{\infty\}\times\bbp^1\cup\bbp^1\times\{\infty\}$, is equivalent to a compact family, parametrised by $K$, of based maps $F$ of $\bbp^1$ into $LG^a/P_{LG}^a$,  for some $a$, with basing condition $F(\infty) = \bbi$. 
\end{corollary}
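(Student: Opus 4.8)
The plan is to bootstrap from the Proposition just proved (about maps into $LG^\infty/P^\infty_{LG}$) together with the ``scaling'' equivalences $LG^a/P^a_{LG}\to LG^b/P^b_{LG}$ and the restriction maps $r_{a,b}$. Given a compact family $\{E_k\}_{k\in K}$ of $G$-bundles on $\bbp^1\times\bbp^1$ trivialised over $\{\infty\}\times\bbp^1\cup\bbp^1\times\{\infty\}$ with a $P$-reduction over $z=0$, the Proposition produces for each $k$ a based map $F_k\colon\bbp^1\to LG^\infty/P^\infty_{LG}$ with $F_k(\infty)=\bbi$. The content of the corollary is that, after shrinking, this can be done \emph{uniformly} in $k$: one wants a single $a$ so that every $F_k$ factors through $LG^a/P^a_{LG}\hookrightarrow LG^\infty/P^\infty_{LG}$, and so that the resulting family $K\to \mathrm{Hol}^*(\bbp^1,LG^a/P^a_{LG})$ is continuous (indeed holomorphic over $K$).

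First I would recall, as in the proof of the Proposition, that triviality of a bundle on $\{\infty\}\times\bbp^1$ is an open condition: for each fixed $k$ there is an open neighbourhood $V_k\ni\infty$ in the first $\bbp^1$ over which $E_k$ is trivial, and the given trivialisation on $\{\infty\}\times\bbp^1\cup\bbp^1\times\{\infty\}$ extends to $V_k\times\bbp^1\cup\bbp^1\times\{\infty\}$. The key point is semicontinuity/openness \emph{in the parameter}: the locus of $k$ for which $E_k$ is trivial on a fixed disk $D_a=\{|z|\ge a\}$ times $\bbp^1$ (together with extendability of the trivialisation) is open in $K$. Granting this, $K$ is covered by the opens $K_a=\{k: E_k\text{ trivial over }D_a\times\bbp^1,\text{ trivialisation extends}\}$ as $a\uparrow\infty$; since $K$ is compact it is covered by finitely many, hence (these being nested as $a$ grows, because $D_b\subset D_a$ for $|b|\ge|a|$ and restriction $r_{a,b}$ gives the implication) by a single $K_a=K$ for $a$ close enough to $\infty$. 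Then each $E_k$, restricted to $D_a\times\bbp^1$ with its extended trivialisation and the $P$-reduction at $z=0$, is exactly the data of a point of $LG^a/P^a_{LG}$ depending on the base point $x\in\bbp^1$, i.e.\ a based map $F_k\colon\bbp^1\to LG^a/P^a_{LG}$ with $F_k(\infty)=\bbi$; running the construction in families over $K$ gives the family of maps, and conversely such a family of maps reconstructs the family of bundles by the clutching/gluing inverse to the Proposition.

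The main obstacle is the uniformity step: showing that ``$E_k$ trivial on $D_a\times\bbp^1$ with extendable trivialisation'' is an open (hence, by compactness of $K$, eventually all of $K$) condition. This is where one must be a little careful — triviality of a bundle on a curve is open in families (the jumping locus is closed, being where $h^0$ of a twist jumps, i.e.\ it is cut out by vanishing of appropriate determinants/$R^0$ or $R^1$ direct-image ranks), but one also needs that the chosen trivialisation over $\{\infty\}\times\bbp^1\cup\bbp^1\times\{\infty\}$ extends over the thickened set $D_a\times\bbp^1\cup\bbp^1\times\{\infty\}$, uniformly. I would handle this by the same extension argument as in the Proposition applied to the total family: the obstruction to extending a trivialisation from a subvariety lives in $H^1$ of the ideal sheaf tensored with $\mathrm{ad}(E)$, which vanishes on a suitable Stein/polydisk neighbourhood, and an elementary compactness argument (a Lebesgue-number-type argument on the compact $K\times\bbp^1$) produces a single $a$ that works for all $k$. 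The remaining verifications — that the correspondence is bijective, that it is holomorphic in $K$, and that $F_k(\infty)=\bbi$ — are then immediate from the pointwise Proposition applied with parameters, using that all the scaling maps $LG^a\to LG^b$ and $r_{a,b}$ are holomorphic.
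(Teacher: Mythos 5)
Your argument is correct and is exactly the (implicit) argument the paper intends: the paper states this as an unproved corollary of the preceding proposition, the point being precisely that compactness of $K$ lets one choose the neighbourhood $V$ of $\infty$ — hence the radius $a$ — uniformly over the family, via openness of the triviality-plus-extension condition in the parameter. Your fleshing out of that openness/compactness step is the right way to justify it.
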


\begin{theorem}\label{configurations}
The map which associates to a holomorphic map of degree $\mathbf k$ in $Hol^*(\bbp^1,LG^\infty/P_{LG}^\infty)$ of degree $\mathbf k$ the corresponding section in 
$H^0_{\mathbf k}(\bbc, {\mathcal P}r)$ of the space of  configurations of principal parts of total degree $\mathbf k$ is an isomorphism.
\end{theorem}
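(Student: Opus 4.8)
The plan is to prove this by constructing an explicit inverse and showing both maps are well-defined and mutually inverse, using the exact sequence \eqref{ppt-sheaf} and the decomposition of $LG/P_{LG}$ into the big cell together with the divisors $Z_\lambda$. First I would recall that a map $F \in Hol^*(\bbp^1, LG^\infty/P_{LG}^\infty)$ of degree $\mathbf k$ is, by the preceding results, generically valued in the big cell $\Sigma_1$, which is a free orbit of $N_{LG}$; hence $F$ restricted to the Zariski-open set where it avoids $Z = \cup_\lambda Z_\lambda$ is literally a holomorphic map into the contractible group $N_{LG}$, i.e. a ``meromorphic map into $N_{LG}$'' in the sense used to define $\mathcal{P}r$. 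The principal part of $F$ at each point of the finite set $F^{-1}(Z)$ is then its class in $Hol(V, LG/P_{LG})/Hol(V, N_{LG})$ over a small disk $V$ around that point; summing these gives a configuration in $H^0_{\mathbf k}(\bbc, \mathcal{P}r)$, and the total degree is $\mathbf k$ because the multidegree of a principal part is read off by intersection with the $Z_\lambda$, which is additive over disjoint supports and, for a based map with $F(\infty)=\bbi$, accounts for the whole intersection number of $F(\bbp^1)$ with each $Z_\lambda$.

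The key steps, in order, are: (1) show the principal-part map is well-defined, i.e. that a degree-$\mathbf k$ map produces a configuration of principal parts whose degrees sum to $\mathbf k$ --- this uses Proposition on the $Z_\lambda$ spanning $H^2$ and the fact that $F(\infty)=\bbi$ lies in the big cell so contributes nothing at infinity; (2) construct the inverse: given a configuration $\{(p_a, \mathfrak{p}_a)\}$ of principal parts of total degree $\mathbf k$, choose for each $a$ a local lift $f_a \in Hol(V_a, LG/P_{LG})$ representing $\mathfrak{p}_a$, and glue --- over the overlaps and over the complement of the support the transition data lies in $Hol(V, N_{LG})$, which is where one must patch; (3) show the two constructions are mutually inverse, which amounts to the statement that a global section of $LG/P_{LG}$ over $\bbp^1$ with prescribed principal parts and with value $\bbi$ at $\infty$ is unique --- equivalently, the only based global holomorphic map $\bbp^1 \to N_{LG}$ is the constant, so two lifts of the same configuration differ by a global $N_{LG}$-valued map that must be trivial.

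The main obstacle I expect is step (2), the surjectivity/gluing: one needs that an arbitrary local principal part actually extends to a holomorphic map on a punctured disk into $LG/P_{LG}$ and that the local pieces can be assembled into a single global map on $\bbp^1$. In the finite-dimensional flag-manifold case this is exactly the content of \cite{BHMM1} (following \cite{Gr}), where one uses that $N_G$ acts freely on the big cell and that a configuration of principal parts with disjoint supports can be added; here the difference is that $N_{LG}$ is an infinite-dimensional (pro-unipotent) group, so one must check that the additivity of principal parts with disjoint support and the vanishing of global based maps $\bbp^1 \to N_{LG}$ survive. Both do: $N_{LG}$ is a successive extension of vector groups (its Lie algebra $\gn_{LG}$ has a filtration with abelian graded pieces), so $Hol(\bbp^1, N_{LG})$ with a basepoint condition is built from $H^0(\bbp^1, \mathcal{O}(-\text{something})) = 0$ groups by an inductive argument, and the gluing of local sections is unobstructed for the same reason ($H^1$ of the relevant sheaves on affine/Stein pieces vanishes).

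Concretely, I would organize the inverse as follows: pick disjoint disks $V_a$ around the support points $p_a$ and a local representative $f_a : V_a \to LG/P_{LG}$ of $\mathfrak{p}_a$ sending $\partial V_a$ into the big cell; on each $V_a \setminus \{p_a\}$ and on $\bbp^1 \setminus \cup_a \{p_a\}$ the map is $N_{LG}$-valued, so the collection $\{f_a\}$ together with the ``trivial'' value on the complement defines an element of a \v{C}ech-type set whose obstruction to gluing lives in $H^1$ of the sheaf $\mathcal{O}(\gn_{LG})$ of holomorphic maps into $\gn_{LG}$ over $\bbp^1$ minus points --- but the supports are disjoint so this is actually a splitting problem on a disjoint union of annuli, solved termwise in the filtration of $\gn_{LG}$ by the classical Mittag--Leffler/partial-fractions argument. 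This yields a well-defined $F \in Hol^*(\bbp^1, LG^\infty/P_{LG}^\infty)$, normalized to $F(\infty) = \bbi$, of degree $\mathbf k$; tracing through shows it recovers the given configuration, and the uniqueness in step (3) shows the two maps are inverse to each other.
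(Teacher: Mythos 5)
Your proposal is correct and follows essentially the same strategy as the paper: injectivity because two lifts of the same configuration differ by a global based holomorphic map into $N_{LG}$, which is necessarily trivial, and surjectivity by covering $\bbp^1$ with disjoint disks around the poles together with their complement and gluing local lifts of the principal parts. The one place you diverge is in justifying the gluing step: the paper passes to the bundle picture on $\bbp^1\times V$ and observes that over the big cell the bundles acquire a canonical trivialisation (from the transverse intersection of the reductions at $z=0$ and $z=\infty$), so that the gluing is unique, whereas you stay on the loop-group side and split the $N_{LG}$-valued transition data on the annuli by a Mittag--Leffler argument carried out termwise in the filtration of $\gn_{LG}$ with abelian quotients. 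Both mechanisms rest on the same underlying fact --- that the big cell is a free $N_{LG}$-orbit --- and your cohomological version is consistent with the vanishing of $H^i(\bbp^1,\gn_{LG}(-1))$ that the paper itself invokes in its deformation-theory section.
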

\begin{proof} The proof follows that given in \cite {BHMM2}.
We begin by noting that open sets $V$ mapped into the big cell, where the principal part is trivial, yield trivial $G$-bundles over $\bbp^1\times V$   equipped with a reduction to $B_G$ at $\{\infty\}\times V$ and a reduction to $N_G$ over  $\{0\}\times V$. The fact that the bundles are trivial on $\bbp^1\times V$ means that the reduction at $0$ can be transported to $\infty$; as we are on the big cell, we have that these two reductions, to $N_G$ and $B_G$, intersect transversally in a point in the fiber of $E$ over $\infty$. In other words, over the big cell, the bundles we obtain are naturally trivialised at $\infty$, and indeed then trivialised globally on $\bbp^1\times V$. Two such bundles can then be glued in a unique way.

Returning to our map, we see that it is naturally injective; a configuration of principal parts determines transition functions, and the equivalence relation in their definition translates into bundle equivalence. Surjectivity follows from a glueing construction. One covers $\bbp_1$ by disjoint disks $V_i, i= 1,\dots ,j$ surrounding the points $p_i$ which are mapped outside the big cell, and by $V_0=\bbp^1\backslash\{p_1,p_2,\dots ,p_j\}$; one can lift the principal part over the $V_i$ to $\phi_i\in Hol(V_i,LG/P_{LG})$, and so bundles $E_i$ which we then glue, using the unique glueing, to the trivial bundle over $\bbp^1\times V_0$ over the intersection $\bbp^1\times (V_0\cap V_i)$.
\end{proof}

Following the model of maps of $\bbp^1$ into $\bbp^1$, we refer to the points of the source $\bbp^1$ over which there is a non-trivial principal part as the {\it poles} of the map. As noted above, each pole has a well defined multiplicity ${\mathbf m} = (m_0,m_1,..,m_{n(P)})$, given by the order of vanishing of the pullbacks $v_{\lambda_i}$ at the pole. 

There are well defined pole location maps
\begin{equation}
\pi_i: Hol_{\mathbf k}^*(\bbp^1,LG^\infty/P_{LG}^\infty)\rightarrow SP^{k_i}(\bbc) = \bbc^{k_i}
\end{equation}
given by associating to a rational map $F$ the divisor of $F^* v_{\lambda_i}$, which sums to
\begin{equation}
\Pi: Hol_{\mathbf k}^*(\bbp^1,LG^\infty/P_{LG}^\infty)\rightarrow SP^{|{\mathbf k}|}(\bbc) = \bbc^{|{\mathbf k}|}
\end{equation}
given by associating to a rational map $F$ the divisor of $F^*(\prod_{\lambda\in \Lambda_P}v_{\lambda})$. Here ${|{\mathbf k}|}$ is the {\it scalar multiplicity} $\sum_{i=0}^{n(P)} k_i$. Here $SP$ denotes the symmetric product.

We can use the analytic structure of $Hol_{\mathbf k}^*(\bbp^1,LG^\infty/P_{LG}^\infty)$ to define a local principal parts space of multiplicity ${\mathbf k}$:
\begin{equation}{\mathcal {LP}}r_{\mathbf k} = (\pi_0,..,\pi_{n(P)})^{-1}(0,0,\dots ,0).
\end{equation}

Our theorem \ref{configurations} tells us in essence that over other points
\begin{equation}(\pi_0,..,\pi_{n(P)})^{-1}(a_0,\dots ,a_{n(P)})
\end{equation}
is a product of local principal part spaces ${\mathcal {LP}}r_{\mathbf m^i}$.  

\bigskip
\noindent{\it iii) Deformations and smoothness.} The infinitesimal version of this theorem is worth noting. Given a map $F$, corresponding to a bundle $E$ with reductions $R$ to $B_G$ at $\{\infty\}$ and a reduction to $N_G$ over  $\{0\}$, one wants to consider infinitesimal deformations of $F$ and of $E$. For the map,  a  deformation gives a unique section of $F^*(T(LG^a/P_{LG}^a))$; deformations of the bundle are given by $H^1(\bbp^1\times\bbp^1, ad(E)(0,-1))$; if one adds in the reductions at zero and infinity, one considers instead of $ad(E)$ the subsheaf $ad(E)(-R)$ of sections taking values in the $\gp_G$ subbundle at zero  and the $\gn_G$ subbundle at infinity corresponding to the reductions, so that deformations of the pair $(E,R)$ are given by $H^1(\bbp^1\times\bbp^1, ad(E)(-R)(0,-1))$. (The twist $(0,-1)$ represents the basing condition at $x=\infty$).

Now let us consider the sequence of sheaves \ref{ppt-sheaf}, and consider deformations of global sections, taking into account the basing conditions. One has
\begin{align}\label{ppt-sheaf-defn}
0\rightarrow H^0(\bbp^1, \gn_{LG}( -1))&\rightarrow H^0(\bbp^1, F^*(T(LG^a/P_{LG}^a))(-1)) \rightarrow H^0(\bbp^1, Q)\cr &\rightarrow H^1(\bbp^1, \gn_{LG}( -1))\dots \end{align}
Here $Q$ represents infinitesimal deformations of $F^*{\mathcal P}r$; using the definitions of ${\mathcal P}r$, this is identified in a straightforward fashion with $R^1\pi_*(ad(E)(-R)(0,-1))$, where $\pi$ is the projection
$\pi:\bbp^1\times\bbp^1\rightarrow\bbp^1)$. In turn, the groups $H^i(\bbp^1, \gn_{LG}(-1))$ are zero, as $\gn_{LG}$ has a filtration $\gn_i$ with abelian quotients, and $H^i(\bbp^1, {\mathcal O}(-1)) = 0$. This then gives
\begin{proposition}\label{isomorphism}
\begin{align*}H^0(\bbp^1, F^*(T(LG^a/P_{LG}^a))(-1))&= H^0(\bbp^1,R^1\pi_*(ad(E)(-R)(0,-1)))\cr &= H^1(\bbp^1\times \bbp^1, ad(E)(-R)(0,-1)).
\end{align*}
\end{proposition}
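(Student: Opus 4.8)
The plan is to read the two displayed equalities straight off the exact sequence \ref{ppt-sheaf-defn} that has just been written down. For the first equality I need the two flanking terms $H^{i}(\bbp^1,\gn_{LG}(-1))$ to vanish for $i=0,1$; for the second I need to identify the deformation sheaf $Q$ of $F^{*}{\mathcal P}r$ with $R^{1}\pi_{*}(ad(E)(-R)(0,-1))$ and then pass from $H^{0}$ of this direct image to $H^{1}$ of the ruled surface. So the proof is an assembly of three pieces, treated in that order.

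\emph{Vanishing of $H^{i}(\bbp^1,\gn_{LG}(-1))$.} As noted just before the statement, $\gn_{LG}$ carries a filtration by subspaces $\gn_{i}$ with abelian, finite‑dimensional quotients; concretely each quotient is a finite sum of root spaces $z^{n}\otimes\gg_{\a}$. Since the sheaf of infinitesimal deformations of maps of $\bbp^1$ into the contractible group $N_{LG}$ is just the constant bundle with fibre $\gn_{LG}$ (trivialise $TN_{LG}$ by left translation), each graded piece is, as a holomorphic bundle over the source $\bbp^1$, trivial, so after the basing twist it is a sum of copies of $\cO_{\bbp^1}(-1)$, for which $H^{0}=H^{1}=0$. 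Climbing the filtration with the long exact cohomology sequences, together with the fact that on the projective curve $\bbp^1$ cohomology commutes with the relevant limit (the Mittag--Leffler condition holding trivially), gives $H^{i}(\bbp^1,\gn_{LG}(-1))=0$ for $i=0,1$. Substituting into \ref{ppt-sheaf-defn} then collapses that sequence to the isomorphism $H^{0}(\bbp^1,F^{*}(T(LG^{a}/P_{LG}^{a}))(-1))\cong H^{0}(\bbp^1,Q)$, which is the first equality.

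\emph{The direct‑image identification.} I would unwind the definition ${\mathcal P}r(V)=Hol(V,LG/P_{LG})/Hol(V,N_{LG})$ through Atiyah's dictionary: a map of $V\subset\bbp^1$ into $LG/P_{LG}$ is a $G$‑bundle on $V\times\bbp^1$ with a reduction to $P_{G}$ along $z=0$, and passage to the quotient by $Hol(V,N_{LG})$ records exactly its deviation from the trivial bundle‑with‑reduction; differentiating this correspondence should identify $Q$ with $R^{1}\pi_{*}\!\big(ad(E)(-R)(0,-1)\big)$, where $\pi\colon\bbp^1\times\bbp^1\to\bbp^1$ is the projection onto the source and the twist $(0,-1)$ is the basing at $x=\infty$. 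For the final passage, the Leray spectral sequence for $\pi$, applied to $\mathcal F=ad(E)(-R)(0,-1)$, gives a five‑term exact sequence
\[
0\to H^{1}(\bbp^1,R^{0}\pi_{*}\mathcal F)\to H^{1}(\bbp^1\times\bbp^1,\mathcal F)\to H^{0}(\bbp^1,R^{1}\pi_{*}\mathcal F)\to H^{2}(\bbp^1,R^{0}\pi_{*}\mathcal F)=0,
\]
so it suffices to check $H^{1}(\bbp^1,R^{0}\pi_{*}\mathcal F)=0$. A global section of $\mathcal F$ is, on each line $\{x\}\times\bbp^1$ of the ruling over which $E$ restricts trivially, a constant element of $\gp_{G}$, and reading off this constant at $z=\infty$, where $E$ is globally trivialised, the basing twist $\cO(-1)$ in the source direction forces it to vanish; hence $H^{0}(\bbp^1,R^{0}\pi_{*}\mathcal F)=H^{0}(\bbp^1\times\bbp^1,\mathcal F)=0$, and since $R^{0}\pi_{*}\mathcal F$ is the bundle of fibrewise‑constant reduction‑valued sections twisted by $\cO(-1)$ — in particular a sum of $\cO(-1)$'s over the locus where $E$ is fibrewise trivial — it carries no summand of degree $\le -2$, so its $H^{1}$ vanishes. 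The edge homomorphism is then an isomorphism, which is the second equality.

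\emph{Main obstacle.} All three pieces are treated as routine in the surrounding discussion, but the one carrying the real content is the middle identification $Q\cong R^{1}\pi_{*}(ad(E)(-R)(0,-1))$: ${\mathcal P}r$ is defined only as a presheaf of pointed sets, so ``infinitesimal deformations of $F^{*}{\mathcal P}r$'' must first be given a meaning — the cleanest route is to work in the analytic category, differentiate the bundle‑theoretic description of $Hol^{*}(\bbp^1,LG^{\infty}/P_{LG}^{\infty})$ furnished by Theorem \ref{configurations}, and only then sheafify along the source $\bbp^1$. The $\gn_{LG}$ vanishing, though infinite‑dimensional, is genuinely mechanical once phrased via the filtration; and the Leray step is elementary except for the assertion that $R^{0}\pi_{*}(ad(E)(-R)(0,-1))$ has no negative‑degree summand after the basing twist, which is the one place where the behaviour over the finitely many lines on which $E$ jumps deserves an explicit check.
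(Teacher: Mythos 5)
Your proof follows essentially the same route as the paper: the exact sequence \ref{ppt-sheaf-defn}, the vanishing of $H^i(\bbp^1,\gn_{LG}(-1))$ via the filtration of $\gn_{LG}$ with abelian quotients, the identification of $Q$ with $R^1\pi_*(ad(E)(-R)(0,-1))$ (which the paper likewise only asserts is "straightforward"), and the Leray sequence for the final equality, which the paper leaves implicit. The one slip is your description of $R^0\pi_*(ad(E)(-R)(0,-1))$ as a sum of $\cO(-1)$'s: over a line of the ruling where $E$ is trivial the two reductions (to $\gp_G$ at $z=0$ and $\gn_G$ at $z=\infty$) are transverse, so the fibrewise sections already vanish and $R^0\pi_*$ is the zero sheaf --- which still gives the $H^1$-vanishing you need, so the conclusion stands.
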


We have
\begin{proposition}
The space of maps $Hol_{\mathbf k}^*(\bbp^1,LG^\infty/P_{LG}^\infty)$ is smooth. 
\end{proposition}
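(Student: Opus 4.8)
The plan is to apply the standard deformation-theoretic smoothness criterion: a point $F$ of $Hol_{\mathbf k}^*(\bbp^1,LG^\infty/P_{LG}^\infty)$ is a smooth point as soon as the obstruction space vanishes there. Using the scaling equivalence with $LG^a/P_{LG}^a$, the first-order deformations of $F$ compatible with the basing $F(\infty)=\bbi$ are $H^0(\bbp^1,F^*(T(LG^a/P_{LG}^a))(-1))$ — the object already computed (and seen to be finite dimensional) in Proposition \ref{isomorphism} — while the obstructions live in $H^1(\bbp^1,F^*(T(LG^a/P_{LG}^a))(-1))$. So everything comes down to showing this $H^1$ vanishes, for every $F$.

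First I would feed the sequence \ref{ppt-sheaf-defn} one step further, obtaining the fragment of a long exact cohomology sequence
$$
H^1(\bbp^1,\gn_{LG}(-1))\longrightarrow H^1(\bbp^1,F^*(T(LG^a/P_{LG}^a))(-1))\longrightarrow H^1(\bbp^1,Q),
$$
where $Q$ is the sheaf of infinitesimal deformations of $F^*{\mathcal P}r$, identified in Proposition \ref{isomorphism} with $R^1\pi_*(ad(E)(-R)(0,-1))$ for $\pi\colon\bbp^1\times\bbp^1\to\bbp^1$. The left-hand term vanishes for the same reason it did in \ref{ppt-sheaf-defn}: $\gn_{LG}$ is filtered by sub-bundles with abelian quotients which, as sheaves on $\bbp^1$, are direct sums of $\mathcal O$, so the twist by $\mathcal O(-1)$ kills all of their cohomology, hence that of $\gn_{LG}(-1)$. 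Thus $H^1(\bbp^1,F^*(T(LG^a/P_{LG}^a))(-1))$ injects into $H^1(\bbp^1,Q)$, and it is enough to prove $H^1(\bbp^1,Q)=0$.

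The point is that $Q$ is supported on a finite set. The sheaf of principal parts ${\mathcal P}r$, like the sheaf of divisors, is trivial over a Zariski-open subset of any curve — its sections have disjoint support — so $F^*{\mathcal P}r$, and with it its sheaf of infinitesimal deformations $Q$, is concentrated at the poles of $F$; a coherent sheaf on a curve with zero-dimensional support has vanishing $H^1$. Equivalently, in the guise $Q=R^1\pi_*(ad(E)(-R)(0,-1))$: over a line $\{x\}\times\bbp^1$ not lying over a pole, the bundle $E$ is trivial and the flag data of $R$ at $z=0$ and at $z=\infty$ are in general position, so the restriction of $ad(E)(-R)(0,-1)$ to that line is a sum of copies of $\mathcal O$ and $\mathcal O(-1)$ and has no $H^1$; hence $R^1\pi_*(ad(E)(-R)(0,-1))$ is a skyscraper sheaf and carries no $H^1$ on $\bbp^1$. (If one prefers to sidestep the genericity statement, one can instead run the Leray spectral sequence for $\pi$ to rewrite $H^1(\bbp^1,Q)$ as $H^2(\bbp^1\times\bbp^1,ad(E)(-R)(0,-1))$ — using $R^2\pi_*=0$ on the curve fibres and the vanishing of $H^2$ of the base curve — and then kill it by Serre duality on $\bbp^1\times\bbp^1$, the dual sheaf being locally free with no sections on a generic line, hence with $\pi_*=0$ and no $H^0$.) Combining these, $H^1(\bbp^1,F^*(T(LG^a/P_{LG}^a))(-1))=0$, and the space of maps is smooth.

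The main obstacle I anticipate is precisely the assertion underlying $H^1(\bbp^1,Q)=0$: that away from the poles the associated bundle-with-flag $(E,R)$ is split with its flags in general position, equivalently that the restriction of $ad(E)(-R)(0,-1)$ to a generic line has only summands of degree $\ge -1$. This is the infinitesimal shadow of Theorem \ref{configurations}, and in fact an alternative proof of the proposition can avoid deformation theory altogether: Theorem \ref{configurations} identifies $Hol_{\mathbf k}^*(\bbp^1,LG^\infty/P_{LG}^\infty)$ with the configuration space $H^0_{\mathbf k}(\bbc,{\mathcal P}r)$, and smoothness of the latter follows from smoothness of the local principal-part spaces ${\mathcal {LP}}r_{\mathbf m}$ together with the product description of the fibres of $(\pi_0,\dots,\pi_{n(P)})$, exactly as in \cite{BHMM2}.
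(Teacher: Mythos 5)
Your proof is correct, and at bottom it is the paper's argument viewed from the curve rather than from the surface. The paper works directly with the equivalent data $(E,R)$ on $\bbp^1\times\bbp^1$ and shows $H^0=H^2=0$ for $ad(E)(-R)(0,-1)$, so that the tangent space $H^1$ has constant rank (minus the Euler characteristic), whence smoothness; you instead kill the obstruction space, identifying it via the sequence \ref{ppt-sheaf-defn} and Leray with $H^1(\bbp^1,Q)\cong H^2\bigl(\bbp^1\times\bbp^1, ad(E)(-R)(0,-1)\bigr)$ --- exactly the group the paper disposes of by Serre duality, and your parenthetical Leray-plus-duality route reproduces that step verbatim. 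Your primary route --- observing that $Q=R^1\pi_*(ad(E)(-R)(0,-1))$ is a skyscraper because $F$ lands in the big cell away from its finitely many poles, so that on a generic line of the ruling the restriction of $ad(E)(-R)$ is $\gg\otimes\mathcal O(-1)$ and has no $H^1$ --- is a slightly more elementary substitute for the duality step, but it rests on the same genericity input that drives both of the paper's vanishings (triviality of the bundle and transversality of the two flags on generic lines). The only real difference of emphasis is that the paper also proves $H^0=0$, which your unobstructedness criterion does not require for smoothness but which the paper uses to phrase the conclusion as constancy of the rank of the tangent space (and to identify the dimension); you should at least note, as the paper implicitly does via Theorem \ref{configurations}, why the flag at $z=0$ is generically transverse to the one at $z=\infty$, namely that the big cell is exactly the locus where the two reductions meet transversally and $F$ meets its complement in finitely many points.
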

It suffices to show that the space of deformations $H^1(\bbp^1\times \bbp^1, ad(E)(-R)(-1))$ of the equivalent data $(E,R)$ is of constant  rank; for this it suffices that $H^i(\bbp^1\times \bbp^1, ad(E)(-R)(0,-1)) = 0, i = 0,2$. To see  that $H^0(\bbp^1\times \bbp^1, ad(E)(-R)(0,-1)) = 0$, we use the fact that $ad(E)(-R)$ is trivial, and so $ad(E)(-R)(0,-1)$ is negative, over the open set of  lines $\{z\}\times \bbp^1$ over which $E$ is trivial. Dually, $H^2(\bbp^1\times \bbp^1, ad(E)(-R)(-1)) = H^0(\bbp^1\times \bbp^1,ad(E)(-R))^*(-1, -2)) = 0$ for the same reason.

As  $H^1(\bbp^1, ad(E)(-R)(-1))$ represents effective deformations of $(E,R)$, we also have that $H^0(\bbp^1, F^*(T(LG^a/P_{LG}^a))(-1))$ represents effective deformations of $F$. 

\bigskip
\noindent{\it iv) Dimensions and a formal degree for $F^*(T(LG/P_{LG} )$}. In finite dimensions, one can calculate the degree of the pullback under a map $F:\bbp^1\rightarrow G/P$ of the tangent bundle to a flag manifold, in terms of weights of the group. On the other hand, by Riemann-Roch this degree is equal to the Euler characteristic of $F^*(T(LG/P_{LG}))(-1)$. Elements of $H^0(\bbp^1,F^*(T(LG/P_{LG} ))(-1))$ correspond to the tangent vectors to the space of based maps to the flag manifold; on the other hand,  a positivity argument tells us that $H^1(\bbp^1, F^*T(G/P)(-1)) =0$, and so our Euler characteristic is equal to the dimension of the space of based maps, which is thus computed in terms of weights of the group. 

In our case, the analogous argument does not work, as the tangent bundle is infinite dimensional; nevertheless, one can compute the dimension of the corresponding space of (framed) bundles, which turns out to be given by the same formula in terms of weights. In this sense one has a formal degree for $F^*(T(LG/P_{LG} ))$.

We recapitulate the finite dimensional argument. The determinant of the bundle $F^*(T( G /P ))$ is  a homogeneous line bundle arising from a character $\epsilon$ of $P$ acting
on $\det(\gg / \gp)$.  As the fundamental 
weights span the dual to the torus space we certainly have that 
$\epsilon = \sum_{i \in \triangle} n_i\lambda_i$.
But if $\sigma_i$ is a simple root reflection in the Weyl group 
for a simple root in $\triangle_0$ then $\sigma_i$ is represented by 
an element of $P$.  However $\epsilon$ is a character of $P$ so 
$\sigma(\epsilon) = \epsilon$. But $\sigma_i(\lambda_j) = 
\lambda_j - \delta^i_j \alpha^j$ so  $n_i = 0$ whenever
$i$ is in $\triangle_0$ so that 
$$
\epsilon = - \sum_{i \not\in \triangle_0}n_i \lambda_i
$$
and  it follows that 
\begin{align}
d =& 2 c_1(\det(F^{-1}TG/P))\cr
       =& 2 \sum_{i \not\in \triangle_0}n_i k_i\cr
\end{align}
where $k_i$ are the degrees of the map into $G/P$. 

It is useful to have a formulae for the $n_i$ that we can compute with.  Note first that $\epsilon = 2(\rho - \rho_P)$
where $\rho$ is half the sum of the positive roots and the $\rho_P$ is half the sum of the positive roots in $P$. (This is where the finite dimensionality comes in.)
 Standard results (see for instance \cite{Humphreys1}) tell us that 
$$
\rho = \sum_{i \in \triangle} \lambda_i
$$
and that
$$
\rho_P = \sum_{i \in \triangle_0}
		\hat\lambda_i,
$$
where $\hat\lambda_i$ is the orthogonal projection, with the Killing form, 
of $\lambda_i$ onto  $\gt_P^*$. That is the $\hat \lambda_i$ are the 
fundamental weights for $\gg_P$ extended to be zero on $\gt_P^\perp$. We have 
for $i\in \Delta_0$:
$$
\hat \lambda_i = \lambda_i - \sum_{j \not\in \triangle_0}
			n_{ij}\lambda_j
$$ 
for some constants $n_{ij}$. For each $j \not\in \triangle_0$, let 
$$
N_j = \sum _{i \in \triangle_0} n_{ij}
$$ 
then 
$$
\epsilon = 2(\rho-\rho_P) = 2(\sum_{i \not\in \triangle_0}
		(\lambda_i + \sum_{j \in \triangle_0}
			n_{ji}\lambda_j),
$$
so
$$
d = 4  \sum_{i \not\in \triangle_0} (1+ N_i) k_i.
$$					
It is interesting to see that this formula can be applied also for loop groups because, although there are infinitely many positive roots, the weights $\rho$ and $\rho_P$ are
still well-defined as sums of fundamental weights.  We will check that this formula gives us the right dimension.

\begin{proposition}
The dimension of $H^0(\bbp^1,F^*(T(LG/P_{LG} ))(-1))$ is 
$$4  \sum_{i \not\in \triangle_0} (1+ N_i) k_i.$$
\end{proposition}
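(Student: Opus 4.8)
The plan is to trade the infinite-dimensional cohomology for a finite-dimensional group on the ruled surface and then apply Riemann--Roch. By Proposition~\ref{isomorphism},
\[
H^0(\bbp^1, F^*(T(LG^a/P_{LG}^a))(-1)) \;\cong\; H^1\bigl(\bbp^1\times\bbp^1,\, ad(E)(-R)(0,-1)\bigr),
\]
and in establishing the smoothness of the space of maps we have already seen that the $H^0$ and the $H^2$ of the sheaf $ad(E)(-R)(0,-1)$ on $\bbp^1\times\bbp^1$ vanish. Hence, up to the overall factor of two that is already present in the finite-dimensional normalisation $d = 2c_1(\det(F^{-1}TG/P))$ recalled above, the quantity to be computed is $-\chi\bigl(\bbp^1\times\bbp^1,\, ad(E)(-R)(0,-1)\bigr)$, which I would evaluate by Hirzebruch--Riemann--Roch, $\chi(\mathcal F)=\int_{\bbp^1\times\bbp^1}\ch(\mathcal F)\,\td(\bbp^1\times\bbp^1)$.

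So the content is a Chern-class bookkeeping. Since $ad(E)$ has structure group the adjoint group, $c_1(ad(E))=0$, $\ch_0(ad(E))=\dim\gg$, and $\ch_2(ad(E))$ is proportional to $c_2(E)$, hence to the final degree $k_{n(P)}$, the constant being fixed by the Dynkin index of the adjoint representation. The passage from $ad(E)$ to $ad(E)(-R)$ is an elementary modification along $\{z=0\}$ replacing the fibre $\gg$ by $\gp_G$ and remembering the $P$-flag $F^0$, together with the framing data at $x=\infty$ that is carried by the twist $(0,-1)$; by Grothendieck--Riemann--Roch its effect on $\ch$ is a class supported on those divisors, whose degree-one part records $\dim(\gg/\gp_G)$ and the flag invariants $j_i$ of \eqref{charges}, and whose degree-two part records the pertinent intersection numbers and the interaction with the trivialisation at $z=\infty$. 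Substituting into the Riemann--Roch integral and using \eqref{charges} to pass from the $j_i$ and $c_2(E)$ to the $k_i$ collapses $-\chi$ to a linear form $\sum_i a_i k_i$, with $a_i=0$ for $i\in\triangle_0$, those degrees not occurring --- in accordance with the earlier remark that only $n(P)-1$ of the degrees come from $G/P$.

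It remains to identify the coefficients, and here one reads off the answer from the finite-dimensional computation recapitulated above. The determinant of $ad(E)(-R)(0,-1)$ restricted to a generic line $\{x\}\times\bbp^1$ of the ruling is, up to the contributions from $z=\infty$ and from the basing twist, the pull-back along the corresponding point of $LG/P_{LG}$ of $\det(L\gg/\gp_{LG})$, i.e.\ of the homogeneous line bundle attached to the character $\epsilon = 2(\rho-\rho_P)$; the key point, valid verbatim for the loop algebra even though $L\gg$ has infinitely many positive roots, is that $\rho$ and $\rho_P$ are still finite sums of fundamental weights, with $\rho-\rho_P=\sum_{i\notin\triangle_0}(1+N_i)\lambda_i$. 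Pairing $\epsilon$ with the degrees $k_i$ (the intersection numbers of $F(\bbp^1)$ with the $Z_{\lambda_i}$) and restoring the overall factor of two then yields $a_i = 4(1+N_i)$. I expect the main obstacle to be exactly the bookkeeping of the middle paragraph: extracting the precise degree-one and degree-two contributions of the modification $ad(E)\to ad(E)(-R)$ along $z=0$ --- the whole flag, not just the rank drop --- and of the basing twist, and checking that the genuinely loop-theoretic data (the ambient infinite chain of sheaves $E^{i,j}$, the trivialisation at $z=\infty$) add nothing beyond what $\rho-\rho_P$ already encodes; once that is pinned down, matching the coefficients is the finite-dimensional weight calculation transported fibrewise along the $z$-direction.
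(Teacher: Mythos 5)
Your route is genuinely different from the paper's, and it is worth saying how. The paper never performs a Riemann--Roch computation on $\bbp^1\times\bbp^1$: for the base case $P_{\max}$ (i.e.\ $\triangle_0=\{\alpha_1,\dots,\alpha_r\}$) it identifies $h^1(\bbp^1\times\bbp^1, ad(E)(0,-1))$ with the dimension of the framed instanton moduli space and quotes the Atiyah--Hitchin--Singer formula $4k/\langle\theta,\theta\rangle$; it then propagates to general standard parabolics via the exact sequence $0\to T(G/Q)\to T(LG/P_{LG})\to T(LG/P_{\max})\to 0$ together with the finite-dimensional weight computation, and to exotic parabolics via $0\to T(B_{LG}/P_{LG})\to T(LG/B_{LG})\to T(LG/P_{LG})\to 0$. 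Your plan --- reduce to $-\chi(\bbp^1\times\bbp^1, ad(E)(-R)(0,-1))$ using Proposition~\ref{isomorphism} and the vanishing of $H^0$ and $H^2$, then evaluate by Hirzebruch--Riemann--Roch --- is a legitimate alternative in principle, and would have the virtue of being self-contained on the surface side.

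But as written there is a genuine gap: the two computations that carry all the content are described and not performed, and one of them is exactly the nontrivial Lie-theoretic fact the paper has to prove. First, the coefficient of $k_{n(P)}$ in your linear form comes from $\ch_2(ad(E))$, i.e.\ from the index of the adjoint representation; to get the stated answer you must show that this coefficient equals $4(1+N_0)=4/\langle\theta,\theta\rangle$, which is the identity $2\langle\rho,\theta\rangle+\langle\theta,\theta\rangle=1$ in the Killing normalisation --- precisely what the paper establishes by the Casimir trace argument. Your final paragraph instead reads the coefficients off from $\epsilon=2(\rho-\rho_P)$ by treating $\det(L\gg/\gp_{LG})$ as if it were an honest line bundle and pairing with the degrees $k_i$; this is the finite-dimensional Riemann--Roch argument that the paper explicitly says ``does not work, as the tangent bundle is infinite dimensional,'' so at this step you are close to assuming the conclusion rather than deriving it from the surface-side Chern classes. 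Second, the degree-one and degree-two contributions of the elementary modification $ad(E)\to ad(E)(-R)$ along $z=0$ and the framing locus --- where the flag degrees $j_i$ of \eqref{charges} enter and must reassemble into $\sum_{i\notin\triangle_0}(1+N_i)k_i$ --- are left as ``bookkeeping'' that you yourself flag as the main obstacle; until it is done the proof is a programme, not an argument. Finally, your starting point (a bundle on $\bbp^1\times\bbp^1$ with a $P_G$-reduction along $z=0$) only covers the standard parabolics; the exotic ones require a separate reduction, as in the paper's last step.
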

We begin with the maximal parabolic $P_{\max}$  in $LG$ corresponding to all of $G$, i.e. with the case 
$$
\triangle_0 = \{ \alpha_1, \dots, \alpha_r \}.
$$
We want the dimension of the space of framed bundles, which will be given by $h^1(\bbp^1\times \bbp^1, ad(E) (0,-1))$.  By our smoothness result, 
this is equal to minus the Euler characteristic. This, in turn, using  Atiyah's correspondence between instantons and spaces of rational maps
 into $LG/P_+{LG}$, was calculated in terms of instantons by Atiyah, Hitchin and Singer\cite{AHS}. We want the dimension of the space of 
framed instantons which is obtained from their formula by adding on the 
dimension of $K$. This gives  
$$
 4 \frac{1}{<\theta,\theta>}k,
$$
where $k$ is the charge of the instanton and $\theta$ is the longest root
of $K$.  

Now recall from section 2 that the simple roots of the affine Lie algebra
corresponding to the loop group of $K$ are obtained by adding 
$$
\alpha_0 = \delta - \theta
$$
to the roots $\alpha_1, \dots, a_r = \alpha_r$ of the finite dimensional algebra and the corresponding fundamental weights are 
$$
l_0 = \gamma, l_1 =\frac {2<\theta, \lambda_1>}{<\theta,\theta>}+\lambda_1, \dots,
l_r = \frac{2<\theta, \lambda_r>}{<\theta,\theta>}+\lambda_r.
$$
Here as above $\theta$ is the highest root and we have  modified the
formula in \cite{Mac} where it is  assumed that the Killing form has been
normalised so that $<\theta, \theta> = 2$. In our discussion $<\ ,\ >$
is precisely the Killing form.  To obtain the desired
$$\frac{1 }{<\theta,\theta>} = 1+N_0,$$
with $$
N_0 = \sum_{i=1}^{r}n_{i0},\quad \hat l_i = l_i - n_{i0} \gamma
$$
and 
$$
 n_{i0} = \frac{2<\theta, \lambda_i>}{<\theta,\theta>},
$$
so that
$$
N_0 = \sum_{i=1}^r \frac{2<\theta, \lambda_i>}{<\theta,\theta>}
=  \frac{2<\theta, \rho>}{<\theta, \theta>},
$$
we need that
$$
\frac{2<\theta, \rho>}{ <\theta, \theta>}+ 1 = \frac{1 }{ <\theta, \theta>}.
$$
This can be checked case by case from the formulae in the appendix to Bourbaki \cite{B}. 
A proof can also be given as follows. It is, of course, enough to know that
\begin{equation}
2<\rho,\theta> + <\theta,\theta> = 1. \label{toproot}
\end{equation}
Recall
that the Casimir operator for $G$ in the adjoint representations, $c_{\rm ad}$,
commutes with the adjoint action of $K$ so, by Schur's lemma
is a scalar multiple of the identity. This scalar multiple is
$$ 
\langle \rho + \theta , \rho + \theta \rangle - \langle \rho, \rho \rangle
$$
or  just  the LHS of \ref{toproot}.  So we have
$$
\tr(c_{\ad}) = \dim(K) ( 2\langle \rho, \theta\rangle + \langle \theta, \theta\rangle).
$$

On the other hand  the Casimir  is defined by 
$$
c_{\ad} = \sum_{i=1}^{\dim(K)} \ad(X_i)\ad(Y_i),
$$
where $X_i$ is a basis for $\gk$ and $Y_i$ a dual basis with respect 
to the Killing form. So 
\begin{align}
\tr(c_{\ad})  &= \sum \tr(\ad X_i \ad Y_i)    \cr
			&= \sum\langle X_i , Y_i \rangle     \cr
			&= \dim(K) \cr 
			&= \dim(K)(2<\rho,\theta> + <\theta,\theta>), \cr
\end{align}
which is what we wanted to prove.

To extend to the more general case of when $\Delta_0$ is a subset of $\{ \alpha_1, \dots, \alpha_r \}$, so that $P_{LG}$ corresponds to a parabolic $Q$ of $G$, we simply need to use the finite dimensional result, and the exact sequence of tangent bundles
$$
0\rightarrow T(G/Q)\rightarrow T(LG/P_{LG})\rightarrow T(LG/P_{\max})\rightarrow 0$$
This gives again for the formal degree (Euler characterisitic)
$$
d = 4  \sum_{i \not\in \triangle_0} (1+ N_i) k_i.
$$	

Finally, to deal with the case of an exotic parabolic, we can move down from the full flag manifold, and use the exact sequence 
$$
0\rightarrow T(B_{LG}/P_{LG})\rightarrow T(LG/B_{LG})\rightarrow T(LG/P_{LG})\rightarrow 0$$
and use the result for $T(LG/B_{LG})$ and that for the finite dimensional manifold, $B_{LG}/P_{LG}$ which is the full flag manifold for the Levi factor of $P_{LG}$.

\bigskip
\noindent{\it v) Deformations and multiplicity.}
The complement $Z$ of the big cell in $LG/P_{LG}$ is cut out by the sections $v_\lambda$ of $L_\lambda$. As we deform the principal part, the pull -back of $v_\lambda$ moves. Infinitesimally, given a tangent vector $X$ at a point of $Z_\lambda$, we can take a derivative $X(v_\lambda)$ of the defining section $v\lambda$; this is realised explicitly by
\begin{align}
T(LG/P_{LG})= LG\times_{P_{LG}}L\gg/\gp_{LG}&\rightarrow L_\lambda/Im (v_\lambda)= LG\otimes_{P_{LG}}\bbc_\lambda/Im (v_\lambda)\cr
(g,a)&\mapsto (g,\pi(-ag^{-1}v_\lambda)).
\end{align}
One checks that this is well defined and that it maps the image of $\gn_{LG}$ in $T(LG/P_{LG})$ to zero, so that one has a map $T(LG/P_{LG})/\gn_{LG}\rightarrow L_\lambda/Im (v_\lambda)$. Pulling this back via $F$, one then has the deformation map
\begin{equation}
R^1\pi_*(ad(E)(-R))\rightarrow F^*(L_\lambda/Im (v_\lambda)).
\end{equation}

\section{Configurations of principal parts, and stabilisation}

The preceeding description of Theorem \ref{configurations} of $G$-bundles on $\bbp^1\times\bbp^1$ (equipped with flags) as sections over $\bbp^1$ of a sheaf of principal parts gives us in some sense an algorithmic way of constructing them: if one wants to build a bundle (with flag) of total degree ${\mathbf k} = (k_0,k_1,\dots ,k_{n(P)})$, one first chooses $r$ multiplicities ${\mathbf m}^i = (m^i_{0}, m^i_{1},. \dots ,m^i_{n(P)}), i = 1,\dots ,r$ summing to $(k_0,k_1,\dots ,k_{n(P)})$ and $r$ points $p_i$ in $\bbc\subset\bbp^1$; one then glues in to the trivialized bundle near each $p_i$ a principal part in  ${\mathcal {LP}}r_{{\mathbf m}^i}$. This describes the moduli space $Hol_{\mathbf k}^*(\bbp_1, LG/P_{LG})$ of based maps as a space of labelled particles over $\bbc$, where the labels are the multiplicities and the principal parts; the space is stratified by the pattern of multiplicities of the particles.

Let $\delta_i$ be the multiplicity $(0, \dots 0, 1, 0, \dots ,0)$, with $1$ in the $i$-th position. This description of $Hol_{\mathbf k}^*(\bbp_1, LG/P_{LG})$  allows the definition of a stabilisation map
\begin{equation}
T_i: Hol_{\mathbf k}^*(\bbp_1, LG/P_{LG})\rightarrow Hol_{\mathbf k + \delta_i}^*(\bbp_1, LG/P_{LG});
\end{equation}
one simply adds  to a configuration of principal parts supported in a disk of radius $r$ around the origin, with one of them located on the circle of radius $r$, a principal part of multiplicity $\delta_i$ at the point $r+1$ on the real axis. This stabilisation preserves the stratification by multiplicity patterns.

We want to understand the behaviour of the homology of $Hol_{\mathbf k}^*(\bbp_1, LG/P_{LG})$ as ${\mathbf k}$ increases.  The homology of spaces of labelled particles has been extensively studied (see, e.g, the book of Cohen, Lada and May \cite{CLM}).  The sources of cycles, are, not surprisingly, two-fold: either cycles in the space of labels, or cycles in configurations of points. Roughly, on each multiplicity stratum, the homology stabilises as one adds in more points of multiplicity $\delta_i$; a more precise statement will follow. On the whole space, the homology will stabilise also, provided that the codimension of the strata increases with the number of points of high multiplicity in the stratum; again we will give a more precise statement below.

Thus, as we shall see,  the  homology of the space   stabilises; the question will then be to what. We will  appeal to a result of Gravesen \cite{Gr}, which will tell us that the limit is homology equivalent to the space of continuous maps $Map_{\mathbf k}^*(\bbp_1, LG/P_{LG})$; this space, up to homotopy, is independent of $k$. 
We note that from a variational point of view, this result is quite appealing. The space $Hol_{\mathbf k}^*(\bbp_1, LG/P_{LG})$, when $P_{LG}= LG^+$, represents the moduli space of instantons on the four sphere, while the corresponding space of continuous maps is the space of all connections on the four-sphere. The instantons minimise the energy functional on connections; thus our statement is that as one increases charge, the homology of the space of minima approximates that of the whole space. This result, which is known as the Atiyah-Jones conjecture, is already established for the classical groups; we now give a proof for all compact groups. Another case of stabilisation covered by the result is that of calorons, instantons on $\bbr^3\times S^1$ satisfying appropriate boundary conditions. 

We also show that the result can be extended from homology groups to homotopy groups. 

The results are an almost verbatim application of \cite{BHMM1, BHMM2, BHM}; we will mostly be quoting theorems, and only outlining the points which need modification.

\bigskip
\noindent{\it i) Stratification and codimensions}

We now describe in more detail the stratification of our space $Hol_{\mathbf k}^*(\bbp_1, LG/P_{LG})$. Following \cite{BHMM1}, we define a stratum associated to each collection of multi-indices
\begin{equation}
{\mathcal M} = \{{\mathbf m}^1,\dots ,{\mathbf m}^r\}
\end{equation}
such that $\sum_{i=1}^r{\mathbf m}^i = {\mathbf k}$. The stratum consist of configurations with $r$ ``poles", of multiplicities $\{{\mathbf m}^1,\dots ,{\mathbf m}^r\}$. One can project from a principal part to its location;  associated to ${\mathcal M} $, we have a smooth stratum ${\mathcal C}_{\mathcal M}$ of $SP(\bbc^{k_0})\times SP(\bbc^{k_1})\times\dots \times SP(\bbc^{k_{n(P)}})$ consisting of configurations of $r$ points with multiplicities ${{\mathbf m}^1,\dots ,{\mathbf m}^r}$, and a projection:  
\begin{equation}
(\pi_0,..,\pi_{n(P)}): {\mathcal S}_{\mathcal M} \rightarrow{\mathcal C}_{\mathcal M}
\end{equation}
The fiber over a point of ${\mathcal C}_{\mathcal M}$ is the product ${\mathcal LP}r_{{\mathbf m}^1}\times {\mathcal LP}r_{{\mathbf m}^2}\times \dots \times {\mathcal LP}r_{{\mathbf m}^r}$. We note that the strata ${\mathcal S}_{\mathcal M}$ are not necessarily smooth. We can, however, stratify it by smooth varieties, for example using the Whitney stratification
\begin{equation}
{\mathcal S}_{\mathcal M} = \cup{\mathcal S}_{{\mathcal M},j}.
\end{equation}
Just as in \cite {BHMM1}, section 6, one can then show that one can order this finite set of strata in such a way that
\begin{itemize}
\item {}The lowest order element is a dense open set; more generally, the unions of all submanifolds of a given order  $\ell$ or less is dense and open
\item {}The normal bundle of the $\ell$-th stratum is orientable in the union of strata of order less than $\ell$.
\end{itemize}

We now want to estimate the codimension of the strata ${\mathcal S}_{\mathcal M}$ in terms of the multiplicity. We follow, with some variation, the procedure of \cite{BHMM1}. There were, in \cite{BHMM1}, three steps;

\begin{itemize}
\item {}One shows, for any $\ell$ and any  principal part $R$ of degree ${\mathbf m}$, that there is a map $F$ of some degree ${\mathbf K}$ with $R$ as one of its principal parts, at $p$, say, such that  the family of holomorphic maps around  $F$   submerges at $p$ onto the jets of a given order $\ell$ of maps into $G/P$;
\item {} One then bounds at a pole the codimension of the variety of jets of multiplicity $\mathbf m$ within the variety of jets. 
\item{} The two first steps bound the codimension within the variety of maps $Hol_{\mathbf K}$; one then uses the local decomposition $Hol_{\mathbf K} = Hol_{\mathbf m} \times Hol_{\mathbf K-m}$ induced by the principal parts picture  to get the same codimension within $Hol_{\mathbf m}$.
\end{itemize}

In our case, as we are in infinite dimensions, the first step must be modified. Fortunately, we have  theorem \ref{birkhoff-strata}, which tells us that at a point of the strata at infinity (which point we will take to be $wP_{LG}$, without loss of generality), there is a decomposition of the space into an infinite dimensional orbit of $U_{LG}^w = N^-_{LG}\cap wU^-_{LG}w^{-1}$, which preserves the stratification of $LG/P_{LG}$, and, transversally, a finite dimensional orbit of $A_w = N_{LG} \cap wU^-_{LG}w^{-1}$. It suffices to show, then, that there is a submersion, for high degree, onto the ``jets in the $A_w$-direction". This amounts to asking that the quotient bundle
\begin{equation}
0\rightarrow \gu_{LG}\cap w\gn_{LG}w^{-1}\rightarrow F^*T(LG/P_{LG})\rightarrow Q\rightarrow 0
\end{equation}
be a sum $\oplus{\mathcal O}(a_i)$ with all $a_i$ at least $\ell$. This means that the map on global sections can have arbitrary jets at a point, and so the application will be submersive.

Let us therefore consider an element $R$ of ${\mathcal {LP}}_r$ of degree ${\mathbf m}= (m_0,\dots ,m_{n(P)})$ evaluating to $wP_{LG}$. We choose a map $\sigma:\bbp^1\rightarrow \bbp^1$ of degree $\ell$, with $\sigma^{-1}(1) = \{p_1,\dots ,p_{\ell}\}$ distinct points. We first create a map $F$ in  $Hol_{\mathbf m}^*(\bbp_1, LG/P_{LG})$ with a single pole by glueing in at $z=1$ the principal part $R\circ\sigma^{-1}$, where $\sigma^{-1}$ is the local inverse of $\sigma$ at $p_1$.
The composition $F\circ\sigma$ then has principal part $R$ at $p_1$, as well as   $\ell-1$ other principal parts of multiplicity ${\mathbf m}$ at $\{p_2,\dots ,p_\ell\}$. It also has $\ell$ points $\{q_1,\dots ,q_{\ell}\}$ counted with multiplicity (the points in  $\sigma^{-1}(\infty)$)  at which the map takes value the base point $P_{LG}$. Now let us look at the image in $Q$ of the trivial bundle $\gu_{LG}^+\cap w\gn_{LG}w^{-1}$; it spans $Q$ at $p_1$ (and so its image is of full rank), and its image vanishes at $\ell$ points $q_i$. This implies that $Q$ is indeed a sum $\oplus{\mathcal O}(a_i), a_i\ge \ell$. Thus, transversally to the orbit of $U_{LG}^w = N^-_{LG}\cap wU^-_{LG}w^{-1}$, the map to jets is locally surjective.

We now look at the space of $J^{\ell}$ of $\ell$ jets into $LG/P_{LG}$, and ask ourselves what is the codimension in this set of the multiplicity ${\mathbf m}$ jets. If the divisor "at infinity" $Z$, whose complement is the big cell, were smooth with normal crossings, one would have codimension $|{\mathbf m}|$, as is easily seen from a computation in local coordinates. Let $v=0$ be a defining equation for $Z$. If the divisor is not smooth, one still has \cite{BHMM1}, by blowing up:

\begin{proposition}\cite{BHMM1}, 
 There are constants $c>0$ and 
$c^{\prime}\ge 0$   independent of $|{\mathbf m}|$ such that 
the codimension of the subvariety of the  jets such that the pull back of $v$ vanishes to order $|{\mathbf m}|$ is 
at least $ {c \cdot |{\mathbf m}|- c^{\prime} }$.
\end{proposition}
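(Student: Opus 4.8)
The plan is to reduce the statement to a finite-dimensional problem and then apply embedded resolution of singularities, following \cite{BHMM1}. Since the condition ``$v\circ\gamma$ vanishes to order $\geq|{\mathbf m}|$ at the base point'' involves only the $(|{\mathbf m}|-1)$-jet of $\gamma$, and the truncation maps between jet spaces are affine-bundle projections, the codimension in question is independent of $\ell$ once $\ell$ is large enough; so one may fix $\ell=|{\mathbf m}|+N$ for a constant $N$ chosen below. Next, by Theorem \ref{birkhoff-strata} the space $LG/P_{LG}$ has, near a point $wP_{LG}$ of a Birkhoff stratum, a local product structure in which the infinite-dimensional $U^w_{LG}$-factor carries no information: the whole stratification, hence $Z$, the defining section $v$, and the multiplicity condition, is pulled back from the finite-dimensional cell $C_w\cong\CC^{\ell(w)}$, where $Z$ restricts to a divisor $\bar Z=\{\bar v=0\}$. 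By the translational symmetries of the extended Dynkin diagram (the $\check T$-part of $W_{\aff}$) only finitely many such local models occur up to isomorphism, so it suffices to bound, in each model, the codimension of the multiplicity-$\mathbf m$ jet locus by $c|{\mathbf m}|-c'$ with constants depending only on the model.

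Fix such a model $(C_w,\bar Z=\{\bar v=0\})$. By Hironaka's embedded resolution there is a proper morphism $\pi\colon\widetilde C\to C_w$, an isomorphism over $C_w\setminus\bar Z$ and obtained from finitely many (say $N$) blow-ups along smooth centres, with $\pi^{*}\bar Z=\sum_j a_j E_j$ a simple normal crossings divisor, $a_j\geq 1$ integers; write $a_{\max}=\max_j a_j$. A germ $\gamma\colon(\CC,0)\to C_w$ with base point on $\bar Z$ but image not contained in $\bar Z$ meets $\bar Z$ only at $0$, so over a punctured disc it lands in $C_w\setminus\bar Z$ and, by properness and the removable singularity theorem, lifts to a germ $\widetilde\gamma$ into $\widetilde C$; its $(\ell-N)$-jet is determined by the $\ell$-jet of $\gamma$ (each blow-up costs at most one order of differentiation), the induced lift of jet spaces having fibres of some bounded dimension $D$. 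In coordinates $(w_1,w_2,\dots)$ adapted to the normal crossings divisor one has $\pi^{*}\bar v=(\text{unit})\cdot\prod_j w_j^{a_j}$, so $\operatorname{ord}_t(\bar v\circ\gamma)=\sum_j a_j\operatorname{ord}_t(w_j\circ\widetilde\gamma)$, and the multiplicity condition forces $\sum_j\operatorname{ord}_t(w_j\circ\widetilde\gamma)\geq|{\mathbf m}|/a_{\max}$.

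To conclude, stratify $\widetilde C$ by the strata of the normal crossings divisor. On each, the locus where $\sum_j\operatorname{ord}_t(w_j\circ\widetilde\gamma)\geq|{\mathbf m}|/a_{\max}$ is a finite union of loci $\{\operatorname{ord}_t(w_j\circ\widetilde\gamma)\geq d_j\ \text{for all}\ j\}$ with $\sum_j d_j\geq|{\mathbf m}|/a_{\max}$, each of codimension exactly $\sum_j d_j$ in $J^{\ell-N}(\widetilde C)$ (it asks the first $d_j$ Taylor coefficients of each $w_j\circ\widetilde\gamma$ to vanish). Pulling back along the lift of jets and discarding the bounded fibre dimension $D$ gives codimension $\geq|{\mathbf m}|/a_{\max}-D$ for this part of the multiplicity locus in $J^\ell(C_w)$. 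The remaining jets, every representative germ of which lies in $\bar Z$, form a subscheme of the $\ell$-th jet scheme of the hypersurface $\bar Z$, whose codimension is at least $(\ell+1)\cdot\mathrm{lct}(\bar Z)-O(1)$ by the general lower bound on codimensions of jet schemes in terms of the log canonical threshold, and hence also grows linearly in $|{\mathbf m}|$ (alternatively, rerun the resolution argument on $\bar Z$ itself). Taking the worst constants over the finitely many local models yields the stated $c>0$ and $c'\geq 0$.

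The main obstacle is the jet-lifting step: one must make precise, and bound uniformly in $|{\mathbf m}|$, how a finite-order jet based on $\bar Z$ — the very locus where $\pi$ fails to be an isomorphism — lifts to the resolution, in particular that the loss of jet order and the fibre dimension of the lift of jet spaces are controlled by constants attached to the resolution alone. This is exactly where the naive codimension $|{\mathbf m}|$ degrades to the affine estimate $c|{\mathbf m}|-c'$, and in \cite{BHMM1} it is handled by induction on the number of blow-ups, analysing at each stage the interaction between the jet, the blow-up centre, and the exceptional divisor.
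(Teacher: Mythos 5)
The paper itself does not prove this proposition: it is quoted from \cite{BHMM1}, the surrounding text indicating only that the argument proceeds ``by blowing up''. Your overall strategy --- reduce to a finite-dimensional local model via Theorem \ref{birkhoff-strata}, take an embedded resolution of $\bar Z=\{\bar v=0\}$ so that its pullback becomes a normal crossings divisor $\sum_j a_jE_j$, lift jets to the resolution, and count codimension there --- is the same strategy the paper attributes to \cite{BHMM1}, and the reduction to finitely many local models, the identity $\operatorname{ord}_t(\bar v\circ\gamma)=\sum_j a_j\operatorname{ord}_t(w_j\circ\widetilde\gamma)$, and the codimension count on the normal crossings model are all sound.

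The gap is exactly at the step you yourself flag, and the quantitative claims you make there are false as stated. It is not true that ``each blow-up costs at most one order of differentiation'', nor that the induced map of jet spaces has fibres of dimension bounded by a constant $D$ attached to the resolution alone. For the blow-up of $\CC^2$ at the origin and the germ $\gamma(t)=(t^k,t^{k+1})$, the lift in the chart $(x,y/x)$ is $(t^k,t)$, and the $j$-th Taylor coefficient of $(y/x)\circ\gamma$ depends on the coefficients of $\gamma$ up to order $j+k$: the loss of jet order equals the order of contact of $\gamma$ with the centre of the blow-up, which for a jet in the multiplicity-${\mathbf m}$ locus can itself be of size comparable to $|{\mathbf m}|$, not a constant of the resolution. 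Consequently the step ``pull back along the lift of jets and discard the bounded fibre dimension $D$'' does not transfer the codimension bound from $J^{\ell-N}(\widetilde C)$ back to $J^{\ell}(C_w)$ with an error independent of $|{\mathbf m}|$. This trade-off --- between the $|{\mathbf m}|$-dependent order of contact with the centres and the conditions that contact imposes on the jet downstairs --- is precisely what the induction on the number of blow-ups in \cite{BHMM1} is designed to control, and is the reason the naive codimension $|{\mathbf m}|$ degrades to $c\cdot|{\mathbf m}|-c'$ with a possibly small $c$. Citing \cite{BHMM1} for that induction is legitimate, but as a self-contained argument your proof does not establish the bound.
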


The submersion result then implies that  the same codimension holds at our point in the  the space of holomorphic maps of sufficiently high degree ${\mathbf K}$, by our submersion result. On the other hand, near our map, one has a local decomposition $Hol_{\mathbf K} = Hol_{\mathbf m}\times Hol_{{\mathbf K}-{\mathbf m}}$, and so the codimension holds inside of $Hol_{\mathbf m}$. Thus, the codimension in $Hol_{\mathbf m}$ of the maps with one simple pole of multiplicity $\mathbf m$ is bounded below by $c|{\mathbf m}|-c'$.

Doing this for each pole of multiplicity ${\mathbf m}^i$ in the stratum gives:

\begin{proposition}(\cite{BHMM1}, Proposition 6.6)
There exists a
positive constant $c(LG/P_{LG})$ which is independent of the stratum
indices ${\mathcal M}$ and the multi-degree ${\mathbf k},$ so that the complex
codimension of $S_{{\mathcal M}}$ in  $Hol_{\mathbf k}^*(\bbp_1, LG/P_{LG})$
is bounded below by 
$$c(LG/P_{LG})\sum_i (|{\mathbf m}^i|-1).$$  
\end{proposition}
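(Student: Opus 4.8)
The plan is to upgrade the single‑pole estimate just established — that the codimension of the locus of maps with one pole of multiplicity ${\mathbf m}$ inside $Hol^*_{\mathbf m}(\bbp^1,LG/P_{LG})$ is at least $c|{\mathbf m}|-c'$, for constants $c>0$, $c'\ge 0$ depending only on $LG/P_{LG}$ — to a bound on a whole stratum, by exploiting the local product structure furnished by the principal‑parts picture. \textbf{Localisation.} Fix a configuration in $S_{\mathcal M}$ whose poles sit at distinct points $p_1,\dots ,p_r\in\bbc$ with multiplicities ${\mathbf m}^1,\dots ,{\mathbf m}^r$, and choose pairwise disjoint disks $D_i\ni p_i$. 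Theorem \ref{configurations} together with the gluing construction in its proof — principal parts with disjoint support glue uniquely onto the trivial bundle over the complement — identifies a neighbourhood of this configuration in $Hol^*_{\mathbf k}(\bbp^1,LG/P_{LG})$ with the product $\prod_{i=1}^r Hol^*_{{\mathbf m}^i}(\bbp^1,LG/P_{LG})$, where the $i$‑th factor consists of maps whose principal parts are supported in $D_i$ and have total multiplicity ${\mathbf m}^i$ (the total multiplicity over $D_i$ is $\sum_j\deg(F^*v_{\lambda_j}|_{D_i})$, an integer, hence locally constant). Under this identification $S_{\mathcal M}$ becomes $\prod_i T_i$, with $T_i$ the sublocus of $Hol^*_{{\mathbf m}^i}$ on which the multiplicity ${\mathbf m}^i$ is concentrated at a single point, and since the $T_i$ lie in independent factors
\begin{equation*}
\codim_{Hol^*_{\mathbf k}}S_{\mathcal M}=\sum_{i=1}^r\codim_{Hol^*_{{\mathbf m}^i}}T_i .
\end{equation*}

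\textbf{The one‑pole bounds.} If $|{\mathbf m}^i|=1$ then ${\mathbf m}^i=\delta_j$, $T_i=Hol^*_{\delta_j}$ and $\codim T_i=0=|{\mathbf m}^i|-1$. If $|{\mathbf m}^i|\ge 2$ then $Hol^*_{{\mathbf m}^i}$ also contains configurations with two or more poles (split ${\mathbf m}^i$ as a sum of two nonzero multiplicities), so $T_i$ is a proper substratum and $\codim T_i\ge 1$; combined with the single‑pole estimate applied to the local model, $\codim T_i\ge\max\{1,\,c|{\mathbf m}^i|-c'\}$. Now choose $M_0=M_0(c,c')$ so large that $ct-c'\ge\tfrac c2(t-1)$ for all real $t\ge M_0$; for the finitely many integers $t$ with $2\le t<M_0$ one has $t-1<M_0$, hence $1\ge\tfrac1{M_0}(t-1)$. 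Setting $c(LG/P_{LG}):=\min\{\tfrac c2,\tfrac1{M_0}\}$, which depends only on $LG/P_{LG}$, gives $\max\{1,\,ct-c'\}\ge c(LG/P_{LG})(t-1)$ for every integer $t\ge1$, and summing over $i$ via the displayed equality yields
\begin{equation*}
\codim_{Hol^*_{\mathbf k}}S_{\mathcal M}\ \ge\ c(LG/P_{LG})\sum_{i=1}^r\bigl(|{\mathbf m}^i|-1\bigr),
\end{equation*}
the constant being visibly independent of ${\mathcal M}$ and ${\mathbf k}$.

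\textbf{Main obstacle.} All the weight is in the localisation step: that a neighbourhood of a point of $S_{\mathcal M}$ really does split as the product of the local one‑pole pieces, and that codimension is therefore additive across the $D_i$. This is exactly where one invokes Theorem \ref{configurations} and the remark, made when the sheaf of principal parts was introduced, that two sections with disjoint support can be added; one must additionally note that the stratification by multiplicity patterns is compatible with this splitting, which is automatic because the $v_{\lambda_j}$‑divisors are local. The remaining two steps are, respectively, the content of the previous paragraph and pure bookkeeping with the constants.
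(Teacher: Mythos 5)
Your proposal is correct and follows essentially the same route as the paper: localise via the principal-parts product decomposition $Hol_{\mathbf K}=Hol_{\mathbf m}\times Hol_{{\mathbf K}-{\mathbf m}}$, apply the single-pole estimate $c|{\mathbf m}^i|-c'$ at each pole, and sum over the poles ("doing this for each pole of multiplicity ${\mathbf m}^i$ in the stratum"). The only difference is that you spell out the constant bookkeeping converting $\max\{1,\,c|{\mathbf m}^i|-c'\}$ into $c(LG/P_{LG})(|{\mathbf m}^i|-1)$, which the paper leaves to the citation of \cite{BHMM1}, Proposition 6.6.
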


\bigskip
\noindent{\it ii) Stabilisation on strata, and on the whole space.}
We have defined stabilisation maps $T_i$ which to a configuration of principal parts add a fixed principal part of multiplicity $\delta_i$. For a stratum ${\mathcal S}_{\mathcal M}$, let $s_i({\mathcal M})$ be the number of multiplicities in the collection ${\mathcal M}$ equal to $\delta_i$. One has the general theorem from \cite{BHMM1},\cite{BHMM2}:
\begin{theorem}
For all coefficient rings, the stabilisation map 
\begin{equation}
T_i: {\mathcal S}_{\mathcal M}\rightarrow  {\mathcal S}_{{\mathcal M}\cup \{\delta_i\}}
\end{equation}
induces isomorphisms in homology groups $H_j$ for $j\le  [s_i({\mathcal M})/2]$.
\end{theorem}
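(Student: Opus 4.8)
The plan is to recognise each stratum $\mathcal S_{\mathcal M}$ as a labelled configuration space of points in $\bbc$ and then quote the classical homological stability of such spaces, exactly as in \cite{BHMM1}. By Theorem \ref{configurations} and the local principal-parts decomposition following it, a configuration of principal parts of pattern $\mathcal M=\{{\mathbf m}^1,\dots,{\mathbf m}^r\}$ is exactly an unordered collection of distinct points $p_1,\dots,p_r\in\bbc$ together with, at each $p_j$, a point of the local principal part space ${\mathcal {LP}}r_{{\mathbf m}^j}$; hence $\mathcal S_{\mathcal M}$ is homeomorphic to a ``coloured, labelled'' configuration space of $r$ points in $\bbc$, in which the colour $\delta_i$ occurs $s_i=s_i(\mathcal M)$ times and a particle of colour ${\mathbf m}$ carries a label in ${\mathcal {LP}}r_{{\mathbf m}}$. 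In this language $T_i$ inserts one further particle of colour $\delta_i$, with a fixed label, near infinity.

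Next I would separate off the $\delta_i$-particles. Remembering only the positions and labels of the particles whose colour is not $\delta_i$ gives a fibre bundle $\rho\colon\mathcal S_{\mathcal M}\to\mathcal B$, where $\mathcal B$ is the (fixed) coloured labelled configuration space of the $t=r-s_i$ non-$\delta_i$ particles; the fibre over $b\in\mathcal B$, with those particles sitting at $q_1,\dots,q_t$, is the labelled configuration space $C_{s_i}\!\bigl(\bbc\setminus\{q_1,\dots,q_t\};\,{\mathcal {LP}}r_{\delta_i}\bigr)$, local triviality coming from small isotopies of $\bbc$. The map $T_i$ covers the identity of $\mathcal B$ and acts on fibres, up to homotopy, by the standard ``insert a point near the outer end'' stabilisation $C_{s_i}\to C_{s_i+1}$. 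Comparing the Leray--Serre spectral sequences of $\rho$ and of the analogous bundle $\rho'$ over $\mathcal B$ for ${\mathcal S}_{{\mathcal M}\cup \{\delta_i\}}$ then reduces the theorem to the corresponding fibrewise statement: on the $E^2$-pages the comparison map is $H_p(\mathcal B;-)$ applied to the map of $q$-th homology local systems induced fibrewise, so an isomorphism of those local systems for $q\le[s_i/2]$ propagates to an isomorphism of abutments in total degree $\le[s_i/2]$.

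The fibrewise claim is classical. Each $\bbc\setminus\{q_1,\dots,q_t\}$ is an open connected surface, and ${\mathcal {LP}}r_{\delta_i}$ is path-connected --- it is the moduli of principal parts of the minimal multiplicity $\delta_i$, closely related to the contractible codimension-one Birkhoff stratum $\Sigma_{w(\lambda_i)}$ of Theorem \ref{birkhoff-strata}, and its connectedness in any case follows from the explicit (Hecke-type) descriptions of the $LG/P_{LG}$ in Section 3. For labelled configuration spaces $C_n(U;X)$ of points in an open connected surface $U$ with $X$ path-connected, the stabilisation $C_n(U;X)\to C_{n+1}(U;X)$ is a homology isomorphism in degrees $\le[n/2]$ with arbitrary coefficients; this is the Segal--McDuff homological stability theorem in the labelled form recorded in \cite{CLM} and used throughout \cite{BHMM1,BHMM2}. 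Taking $n=s_i$ gives the fibrewise statement, hence the theorem.

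The main obstacle is the second step: verifying that the decomposition of Theorem \ref{configurations} is genuinely compatible with forgetting all but the $\delta_i$-particles --- so that $\rho$ really is a locally trivial bundle and $T_i$ a bundle map acting fibrewise by the model stabilisation --- and that the spectral-sequence comparison is legitimate over the non-compact base $\mathcal B$. This bookkeeping is a line-by-line transcription of \cite{BHMM1}; the one genuinely new point to check is that, for the possibly \emph{exotic} parabolics $P_{LG}$ arising here, the local principal part space ${\mathcal {LP}}r_{\delta_i}$ remains path-connected, so that the slope-$1/2$ range $[s_i(\mathcal M)/2]$ is the correct one. Once that is settled, the remainder is the classical configuration-space machinery.
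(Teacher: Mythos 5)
Your proposal is correct and follows essentially the same route as the paper, which does not prove this theorem itself but quotes it from \cite{BHMM1,BHMM2} with only the one-line heuristic that the cycles requiring the extra labelled point live above dimension $[s_i({\mathcal M})/2]$; your sketch is a faithful unpacking of that cited argument, identifying ${\mathcal S}_{\mathcal M}$ as a labelled configuration space, fibring over the non-$\delta_i$ particles, and invoking Segal--McDuff stability for $C_n(U;X)$ with $X$ path-connected. The two points you flag --- the local triviality of the forgetful map and the path-connectedness of ${\mathcal {LP}}r_{\delta_i}$ (including for exotic parabolics) --- are indeed the only items not already covered verbatim by \cite{BHMM1}, so your assessment of where the work lies is accurate.
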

Roughly speaking, the cycles in ${\mathcal S}_{{\mathcal M}\cup \{\delta_i\}}$
for which one needs the extra labelled point only occur in dimensions greater than $ [s_i({\mathcal M})/2]$. 

The homology groups of the strata fit together through a Leray spectral sequence to give the homology of the whole space. Let $I_k$ be the index set for the strata.

For each ${\mathbf k}$ and all coefficient rings $A,$ there are homology Leray spectral
sequences $E^r(Hol_{\mathbf k}^*(\bbp_1, LG/P_{LG});A)$ which converge to filtrations of the homology
$H_*(Hol_{\mathbf k}^*(\bbp_1, LG/P_{LG});A)$ with 
$$ E^1(Hol_{\mathbf k}^*(\bbp_1, LG/P_{LG});A) ~\cong~ \bigoplus_{{\mathcal I}_{\mathbf k}} 
H_*(\Sigma^{2cd({\mathcal M},j,{\mathbf k})}({\mathcal S}_{{\mathcal M},j})_+;A)$$
where $2cd({\mathcal M},j,{\mathbf k})$ is the real codimension of 
${\mathcal S}_{{\mathcal M},j}$ in $Hol_{\mathbf k}^*(\bbp_1, LG/P_{LG}).$ The homology of the strata thus appears in the spectral sequence, suspended by the codimension. Furthermore, the inclusions $T_i$ induce maps of spectral sequences
$$\tilde T_i: 
E^r(Hol_{\mathbf k}^*(\bbp_1, LG/P_{LG});A) \longrightarrow E^r (Hol_{{\mathbf k}+\delta_i}^*(\bbp_1, LG/P_{LG});A).$$

Building first  the homology of the strata, we see first that the homology of the strata with many poles of order one stabilises; then, in the spectral sequence the homology of strata with few poles of order one gets suspended upwards by a high codimension. Combining the two effects gives us a stabilisation for homology. 

If ${\mathbf k},{\mathbf k}'$ are two multi-indices, we set ${\mathbf k}<{\mathbf k}'$ if 
${\mathbf k}'-{\mathbf k}$ contains only zero or positive entries. Given two such multi-indices, one can define a stabilisation
$T_{{\mathbf k},{\mathbf k}'}:Hol_{\mathbf k}^*(\bbp_1, LG/P_{LG})\rightarrow Hol_{\mathbf k'}^*(\bbp_1, LG/P_{LG})$ by composing the appropriate number of $T_i$'s. (These maps commute up to homotopy).
Now set $\ell({\mathbf k}) = \min(k_i)_{i=1,\dots ,n(P)}$.

\begin{theorem}\cite{BHMM1} For all coefficient rings $A$,
the stabilisation 
$$T_{{\mathbf k},{\mathbf k}'}:Hol_{\mathbf k}^*(\bbp_1, LG/P_{LG})\rightarrow Hol_{\mathbf k'}^*(\bbp_1, LG/P_{LG})$$ 
induces isomorphisms in homology groups $H_j$ for
$j\le [\min(1/2, c(LG/P_{LG}))\ell({\mathbf k})] - 1$.
\end{theorem}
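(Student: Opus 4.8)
The argument is an almost verbatim transcription of \cite{BHMM1,BHMM2}: the two genuinely non-formal ingredients, namely the submersion onto ``$A_w$-jets'' furnished by Theorem \ref{birkhoff-strata} and the codimension estimate, the complex codimension of $\mathcal S_{\mathcal M}$ being bounded below by $c(LG/P_{LG})\sum_\ell(|\mathbf m^\ell|-1)$, have already been established, so what is left is spectral-sequence bookkeeping. Since $T_{\mathbf k,\mathbf k'}$ is, up to homotopy, a composite of the elementary maps $T_i$, since $\ell(\mathbf k')\ge\ell(\mathbf k)$ whenever $\mathbf k\le\mathbf k'$, and since maps inducing homology isomorphisms in a range compose to give an isomorphism in the smallest of those ranges, it suffices to show that $T_i:Hol_{\mathbf k}^*(\bbp_1,LG/P_{LG})\to Hol_{\mathbf k+\delta_i}^*(\bbp_1,LG/P_{LG})$ induces isomorphisms on $H_j$ for $j\le[\min(1/2,c(LG/P_{LG}))\ell(\mathbf k)]-1$. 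I would prove this by comparing, through the map of Leray spectral sequences $\tilde T_i$, the sequences $E^r(Hol_{\mathbf k}^*)$ and $E^r(Hol_{\mathbf k+\delta_i}^*)$, whose $E^1$-pages are $\bigoplus_{\mathcal M,j}H_*\big(\Sigma^{2cd(\mathcal M,j,\mathbf k)}(\mathcal S_{\mathcal M,j})_+\big)$; on $E^1$, $\tilde T_i$ sends the $\mathcal M$-summand of the source to the $(\mathcal M\cup\{\delta_i\})$-summand of the target, the two strata having equal codimension (adding a simple pole raises the dimension of the ambient space and of the stratum by the same amount), and on that stratum the stratum-level stabilisation theorem makes $T_i$ an isomorphism on $H_m$ for $m\le[s_i(\mathcal M)/2]$, where $s_i(\mathcal M\cup\{\delta_i\})=s_i(\mathcal M)+1$.

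\emph{The counting inequality.} The combinatorial heart of the matter is the elementary bound
$$s_i(\mathcal M)+2\sum_\ell\big(|\mathbf m^\ell|-1\big)\ \ge\ k_i\ \ge\ \ell(\mathbf k),$$
valid for any stratum $\mathcal M$: a pole of type $\delta_i$ contributes $1$ to $k_i$, while any other pole meeting the divisor $Z_{\lambda_i}$ has $|\mathbf m^\ell|-1\ge 1$ and contributes at most $2(|\mathbf m^\ell|-1)$ to $k_i$. Hence when $s_i(\mathcal M)$ is small the codimension is large. A stratum contributes to total degree $n$ of the spectral sequence only through its homology in degree $n-2cd$, so the inequality forces, for $n\le[\min(1/2,c)\ell(\mathbf k)]-1$, that either $n-2cd<0$ (the summand is zero on both sides, since the codimensions agree) or $0\le n-2cd\le[s_i(\mathcal M)/2]$ (the range where $T_i$ is already an isomorphism); this dichotomy is what one checks by carrying the floor functions through, separating the cases $c\le 1/2$ and $c>1/2$. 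In addition, a target stratum $\mathcal M'$ containing no $\delta_i$ has $s_i(\mathcal M')=0$, so $2\sum_\ell(|\mathbf m'^\ell|-1)\ge k_i+1\ge\ell(\mathbf k)+1$ and its real codimension exceeds $\min(1/2,c)\ell(\mathbf k)$; as these are the only $E^1$-summands of the target not in the image of $\tilde T_i$, they contribute nothing in the range and so do not obstruct surjectivity. Thus $\tilde T_i$ is an isomorphism on $E^1$ in total degrees $\le[\min(1/2,c)\ell(\mathbf k)]-1$.

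\emph{Conclusion.} A comparison theorem of Zeeman type for the two spectral sequences then upgrades this to an isomorphism on $E^\infty$, hence on $H_j$ of the total spaces, throughout the same range; composing over the $T_i$ as in the reduction step finishes the proof. The step I expect to be the main obstacle is pinning the numerical range down precisely: one must carry the floor functions and the edge term $-1$ through the comparison, check that the $1/2$ coming from stabilisation on strata and the constant $c(LG/P_{LG})$ coming from codimensions combine exactly as $\min(1/2,c(LG/P_{LG}))$, and verify that the differentials do not erode the range. This is where one leans on the preparatory facts that the Whitney stratification can be ordered with dense open partial unions and with orientable normal bundles, so that the Thom-isomorphism/suspension description of $E^1$ is valid, and everything then proceeds exactly as in \cite{BHMM1}.
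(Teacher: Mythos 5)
Your proposal is correct and follows essentially the same route as the paper, which itself only sketches the argument in two sentences (stratum-level stabilisation plus codimension suspension in the Leray spectral sequence) and defers the details to \cite{BHMM1}. Your counting inequality $s_i(\mathcal M)+2\sum_\ell(|\mathbf m^\ell|-1)\ge k_i\ge\ell(\mathbf k)$ and the treatment of the target strata with $s_i(\mathcal M')=0$ are exactly the bookkeeping that reference carries out, so nothing further is needed.
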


\bigskip
\noindent{\it iii) Stabilisation to the space of continuous maps}

We have now seen that the homology of our spaces stabilises; as noted above,  the question is then to what. For this, we can exploit the result of Gravesen \cite{Gr}, who builds a limit space of configurations of principal parts, and shows that it is homology equivalent to the space of based continuous maps $\Omega^2(LG/P_{LG})$. One then has, as in \cite{BHMM1}, with the proofs going over verbatim:

\begin{theorem}\cite{BHMM1} For all coefficient rings $A$,
the inclusion 
$$I:Hol_{\mathbf k}^*(\bbp_1, LG/P_{LG})\rightarrow \Omega^2_{\mathbf k}(LG/P_{LG})$$ 
induces isomorphisms in homology groups $H_j$ for
$j\le [\min(1/2, c(LG/P_{LG}))\ell({\mathbf k})] - 1$.
\end{theorem}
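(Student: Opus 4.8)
The plan is to follow \cite{BHMM1, BHMM2} essentially verbatim: factor $I$ through a stable limit of the holomorphic mapping spaces and then appeal to Gravesen's theorem. First I would form, using the maps $T_{{\mathbf k},{\mathbf k}'}$ (which commute up to homotopy), the mapping telescope $Hol_\infty^*$ of the directed system of spaces $Hol_{\mathbf k}^*(\bbp_1, LG/P_{LG})$; the index poset is directed, so on homology the groups $H_j(Hol_{\mathbf k}^*(\bbp_1, LG/P_{LG}))$ form an honest directed system with colimit $H_j(Hol_\infty^*)$. By the preceding theorem the transition maps of this system are isomorphisms once $j\le [\min(1/2,\,c(LG/P_{LG}))\,\ell({\mathbf k})]-1$, so the canonical map
$$
j_{\mathbf k}\colon Hol_{\mathbf k}^*(\bbp_1, LG/P_{LG})\longrightarrow Hol_\infty^*
$$
is an isomorphism on $H_j$ in precisely that range. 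It then suffices to produce a homology equivalence $Hol_\infty^*\simeq \Omega^2_{\mathbf k}(LG/P_{LG})$ such that the composite with $j_{\mathbf k}$ is, on homology, the inclusion $I$.

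For this I would invoke Gravesen \cite{Gr}. Theorem \ref{configurations} together with the sheaf of principal parts ${\mathcal P}r$ identifies $Hol_{\mathbf k}^*(\bbp_1, LG/P_{LG})$ with the space of configurations of principal parts of total degree ${\mathbf k}$, and realises the stabilisation map $T_i$ as the juxtaposition of a fixed standard principal part of degree $\delta_i$ near infinity. This configuration picture and its stable limit are exactly what Gravesen analyses --- the flag manifolds of loop groups were the original subject of his paper --- and his main result gives a homology equivalence from the stable configuration space $Hol_\infty^*$ onto the space $\Omega^2(LG/P_{LG})$ of based continuous maps $\bbp^1\to LG/P_{LG}$. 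Since $\Omega^2(LG/P_{LG})$ is a double loop space its path components are mutually homotopy equivalent, and Gravesen's equivalence intertwines each $T_i$ with the component translation by $\delta_i$; restricting to the summand indexed by ${\mathbf k}$ therefore yields the required homology equivalence $Hol_\infty^*\simeq \Omega^2_{\mathbf k}(LG/P_{LG})$.

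It remains to check that the composite $Hol_{\mathbf k}^*(\bbp_1, LG/P_{LG})\xrightarrow{\,j_{\mathbf k}\,}Hol_\infty^*\xrightarrow{\,\sim\,}\Omega^2_{\mathbf k}(LG/P_{LG})$ agrees on homology with $I$. This is a diagram chase. The ``scanning'' map, sending a configuration of principal parts to the underlying continuous section, is homotopic to the literal inclusion $I$ of holomorphic into continuous based maps; it is compatible up to homotopy with each $T_i$ (adjoining a far-off standard principal part changes the continuous map only by the component translation, an honest homotopy equivalence of $\Omega^2$) and, by its construction, with Gravesen's equivalence. Composing these homotopies identifies the composite above with $I$ on homology, and the stated range is inherited from the preceding theorem.

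The step I expect to be the main obstacle is not conceptual but one of verification: one must confirm that Gravesen's hypotheses genuinely apply to the infinite-dimensional targets $LG/P_{LG}$ --- that the local principal-part spaces ${\mathcal {LP}}r_{{\mathbf m}}$ are of finite type, that the ``add a principal part at a point'' monoid structure and the scanning map behave as in finite dimensions, and that the description of $LG/P_{LG}$ as a union of finite-codimension Birkhoff strata (Theorem \ref{birkhoff-strata}) is enough to carry his blow-up and group-completion arguments through. Since Gravesen worked in precisely this generality, each of these points should be a routine check rather than a new argument, which is what is meant by the proofs going over verbatim.
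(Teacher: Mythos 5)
Your proposal matches the paper's argument: the paper likewise combines the preceding stabilisation theorem with Gravesen's limit space of configurations of principal parts and its homology equivalence to $\Omega^2(LG/P_{LG})$, citing \cite{BHMM1} for the proofs going over verbatim. Your additional remarks on the telescope construction, the scanning/inclusion compatibility, and the applicability of Gravesen's results to the infinite-dimensional targets are consistent with (and usefully flesh out) what the paper leaves implicit.
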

 
\bigskip

\noindent{\it iv) Homotopy groups}

There remains the question of extending the results to homotopy groups. For this, one shows that the homology of the universal covers stabilises, and then appeals to Whitehead's theorem.
The proof, once one has the principal parts description, repeats the analysis of \cite{BHM}, and we will not go through it here, except to remark that one shows first that the fundamental group is Abelian, as soon as one has $k_i>1$ for each $i$. The reason for this is that one can arrange for a loop to live in a configuration with one multiplicity one pole of each type. If one has two loops in the space, and all $k_i$ at least two, one can place one loop in a configuration over a disk, and the second loop in a configuration over a second disjoint disk. The two loops then commute for obvious geometrical reasons. Once one has this fact, the rest of the analysis in \cite{BHM} involves understanding the homology of Abelian covers of particle spaces enough to prove stabilisation. This analysis is not straightforward, but yields:

\begin{theorem}\cite{BHMM1} The inclusion 
$$I:Hol_{\mathbf k}^*(\bbp_1, LG/P_{LG})\rightarrow \Omega_{\mathbf k}^2(LG/P_{LG})$$ 
induces isomorphisms in homotopy groups $\pi_j$ for
$j\le [\min(1/2, c(LG/P_{LG}))\ell({\mathbf k})] -s - 2$, where $s$ is the rank of $\pi_3(LG/P_{LG})$.

\end{theorem}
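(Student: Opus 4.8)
The plan is to upgrade the homological stabilisation of the previous theorem to a homotopical one, following \cite{BHM}, by applying Whitehead's theorem to universal covers. The first step, already indicated above, is to show that $\pi_1\bigl(Hol_{\mathbf k}^*(\bbp_1, LG/P_{LG})\bigr)$ is abelian as soon as $k_i>1$ for every $i$. Given a based loop, the particle description lets one deform it into a family of configurations supported in a small disk and carrying exactly one pole of each multiplicity $\delta_i$; a second based loop can then be pushed into a disjoint disk, and the two families commute because their supports are disjoint. Thus each component of the source has finitely generated abelian fundamental group, and the same holds for the target, since $\pi_1\bigl(\Omega^2_{\mathbf k}(LG/P_{LG})\bigr)\cong\pi_3(LG/P_{LG})$.

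Next I would pass to universal covers. The $j=1$ case of the homology theorem, together with the fact that both fundamental groups are abelian, shows that $I_*\colon\pi_1\bigl(Hol_{\mathbf k}^*(\bbp_1, LG/P_{LG})\bigr)\to\pi_1\bigl(\Omega^2_{\mathbf k}(LG/P_{LG})\bigr)=\pi_3(LG/P_{LG})$ is an isomorphism once $\ell(\mathbf k)$ is large enough. Let $\widetilde I$ denote the induced map between the (simply connected) universal covers. By Whitehead's theorem it then suffices to prove that $\widetilde I$ induces isomorphisms on integral homology $H_j$ in the appropriate range, since a homology equivalence of simply connected spaces through degree $N$ is a homotopy equivalence through degree $N-1$.

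The heart of the matter is therefore a homological stabilisation theorem for the covers. Here one reruns the analysis of part (i) upstairs: the universal cover of $Hol_{\mathbf k}^*(\bbp_1, LG/P_{LG})$ is again assembled from local principal-part spaces, but each particle now carries an additional label in the deck group $\pi_3(LG/P_{LG})$, recording the homotopy class of a chosen path back to the basepoint. The stratification by multiplicity patterns lifts, the codimension estimate $c(LG/P_{LG})\sum_i(|\mathbf m^i|-1)$ of the earlier proposition still holds on the cover, and the stabilisation maps $T_i$ lift as well; one then shows, as downstairs, that the homology of each lifted stratum stabilises once it contains enough poles of a given multiplicity $\delta_i$, and the Leray spectral sequences of the covers assemble this into a stabilisation statement for the whole cover. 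The one genuinely new phenomenon is that stabilising the homology of the cover forces one to ``fill in'' each of the $s$ independent directions in the deck group $\pi_3(LG/P_{LG})$ with extra particles, which costs a band of homological degrees controlled by $s$; combined with the Whitehead shift this produces the correction $-s-2$ relative to the homology range $[\min(1/2,c(LG/P_{LG}))\ell(\mathbf k)]-1$, the $k_i\ge 2$ hypothesis being exactly what makes the $\pi_1$-abelian step available.

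I expect the principal obstacle to be this last stabilisation for the covers. Downstairs the particles are sufficiently ``unlabelled'' that the group-completion and scanning machinery of \cite{CLM, BHMM1} applies directly; upstairs the $\pi_3(LG/P_{LG})$-grading couples the label data to the configuration homology, and one must disentangle this stratum by stratum while keeping the loss of range linear in $s$. Everything else --- the geometric commutation argument, the identification of the two fundamental groups, and the final invocation of Whitehead --- is formal once the covers are understood.
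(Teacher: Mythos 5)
Your proposal follows essentially the same route as the paper: the paper likewise establishes that $\pi_1$ is abelian for $k_i>1$ via the disjoint-disks commutation argument, passes to universal covers viewed as labelled particle spaces with labels in the deck group, defers the homological stabilisation of these abelian covers to the analysis of \cite{BHM}, and concludes with Whitehead's theorem. Your identification of the cover-stabilisation step as the genuine difficulty, and of the $-s-2$ correction as the cost of filling in the $\pi_3(LG/P_{LG})$ directions plus the Whitehead shift, matches the paper's (admittedly terse) account.
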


In particular, the homotopy groups of the moduli spaces of $G$-bundles  $\bbp^1\times \bbp^1$  with zero degree, trivialised on $\{\bbp^1\times \infty\}\cup\{\infty\times \bbp^1\}$ and a reduction to a parabolic $P$ over $\{\bbp^1\times 0\}$ stabilise as one increases both the second Chern class and the degree of the flag.

\bigskip
\noindent{\it v) Atiyah-Jones type conjectures.} 
\bigskip

{\it v.1. Instantons.}  Let $P_{LG} = LG^+$, and let $G = K_\bbc$ for a compact semi-simple group $K$. One has the theorem, due to Donaldson \cite{Donaldson}:

\begin{theorem}
The moduli space ${\ca M}_{K,k}$ of framed $K$-instantons of charge $k$ on the four-sphere is isomorphic to the moduli space $Hol_{ k}^*(\bbp_1, LG/LG_+)$
\end{theorem}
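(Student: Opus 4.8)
The plan is to combine Atiyah's correspondence, recalled above, with Donaldson's Hermitian--Einstein / ADHM description of instantons on the four-sphere.

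First I would fix a point $\infty\in S^4$ together with a compatible complex structure, identifying $S^4\setminus\{\infty\}$ with $\bbc^2$, and compactify $\bbc^2$ as the complement of the nodal curve $\ell_\infty=(\bbp^1\times\{z=\infty\})\cup(\{x=\infty\}\times\bbp^1)$ in $\bbp^1\times\bbp^1$, with the node $(\infty,\infty)$ corresponding to $\infty\in S^4$. Given a framed $K$-instanton of charge $k$, that is, a finite-action anti-self-dual connection $A$ together with a trivialisation of the fibre over $\infty$, Uhlenbeck's removable singularity theorem lets us regard $A$ as a finite-action ASD connection on a $K$-bundle over $\bbc^2$; since the $(0,2)$ part of the curvature vanishes, $\bar\partial_A$ endows the associated $G=K_\bbc$-bundle $E$ with a holomorphic structure. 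Following Donaldson, $E$ extends to a holomorphic $G$-bundle over $\bbp^1\times\bbp^1$ that is holomorphically trivial on $\ell_\infty$ and has $c_2=k$, and the framing at $\infty$ propagates to a holomorphic trivialisation of $E$ along $\ell_\infty$. Conversely, a framed holomorphic $G$-bundle over $\bbp^1\times\bbp^1$, trivial on $\ell_\infty$ with $c_2=k$, produces --- via the monad/ADHM data, or directly via the existence half of the Hitchin--Kobayashi correspondence on the open surface $\bbc^2$ with its flat metric (conformally the round $S^4$) --- a unique finite-action ASD connection inducing the given holomorphic structure and matching the framing, with charge $c_2=k$. These two assignments are mutually inverse and vary real-analytically in families, hence identify $\mathcal{M}_{K,k}$ with the moduli space of framed holomorphic $G$-bundles on $\bbp^1\times\bbp^1$ that are trivial on $\ell_\infty$ and have second Chern class $k$.

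Next I would apply the proposition of Atiyah recalled above: over compact families, $Hol^*_k(\bbp^1,LG/LG_+)$ is biholomorphic to the moduli of pairs $(E,t)$ with $E$ a $G$-bundle on $\bbp^1\times\bbp^1$ and $t$ a trivialisation on $\bbp^1\times\{z=\infty\}\cup\{x=\infty\}\times\bbp^1=\ell_\infty$, the index $k$ being the second Chern class. Composing this with the instanton identification of the previous paragraph yields the stated isomorphism $\mathcal{M}_{K,k}\cong Hol^*_k(\bbp^1,LG/LG_+)$.

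The genuine obstacle is the analytic core of Donaldson's theorem: producing the ASD connection from the framed algebraic bundle and controlling its asymptotics at $\infty$ so that it extends to a finite-energy instanton on $S^4$, i.e. the existence part of Hitchin--Kobayashi on $\bbc^2$ (equivalently, the solution of the ADHM equations). The remaining points --- the removable singularity step, the passage between $\bbc^2$ and $\bbp^1\times\bbp^1$, the bookkeeping of framings, and the holomorphic dependence on parameters --- are routine once this is in hand. All of this is carried out in \cite{Donaldson}; here we only record that it dovetails with the loop-group dictionary of the preceding section to give the theorem.
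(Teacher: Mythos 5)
Your outline is correct and follows exactly the route the paper takes: the paper offers no independent proof, simply citing Donaldson's theorem identifying framed charge-$k$ instantons with holomorphic $G$-bundles on $\bbp^1\times\bbp^1$ trivialised on $\ell_\infty$, and then invoking the Atiyah correspondence between such framed bundles and based holomorphic maps into $LG/LG_+$. You correctly locate the only non-routine content in the analytic existence/uniqueness statement of \cite{Donaldson}, which is precisely what the paper defers to as well.
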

Coupled with the isomorphism $\Omega^2(LG/LG_+)\simeq \Omega^3K$, one then has, as a result of our theorem:
\begin{corollary}(Atiyah-Jones conjecture)
There are positive constants $c, c'$ such that the  inclusion ${\ca M}_{K,k}\rightarrow \Omega^3K$ induces isomorphisms in homotopy groups $\pi_j$ for
$j\le c|k|-c'$.
\end{corollary}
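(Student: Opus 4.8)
The plan is to read the corollary off the homotopy-group version of the stabilisation theorem proved above, specialised to the maximal parabolic $P_{LG}=LG^+$, after inserting the two external inputs quoted just before the corollary. Since $LG^+$ is the maximal parabolic corresponding to all of $G$, the $P$-flag datum $R$ is vacuous, the multi-degree ${\mathbf k}$ collapses to a single non-negative integer $k$ — the charge, equivalently the second Chern class of the corresponding $G$-bundle on $\bbp^1\times\bbp^1$ — and $\ell({\mathbf k})=k$. The theorem then asserts that the inclusion $I\colon Hol^*_k(\bbp^1,LG/LG_+)\to\Omega^2_k(LG/LG_+)$ induces isomorphisms on $\pi_j$ for $j\le[\min(1/2,c(LG/LG_+))\,k]-s-2$, where $s$ is the rank of $\pi_3(LG/LG_+)$.

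I would then feed in Donaldson's theorem to rewrite the source, ${\ca M}_{K,k}\cong Hol^*_k(\bbp^1,LG/LG_+)$, and the identification $\Omega^2(LG/LG_+)\simeq\Omega^3K$ to rewrite the target. Here a little care with components is needed: the stabilisation theorem lands in the charge-$k$ component $\Omega^2_k(LG/LG_+)$, while $\Omega^3K$ conventionally denotes one (identity) component; since $\Omega^3K$ is a topological group under pointwise multiplication, left translation identifies all of its components, so $\Omega^2_k(LG/LG_+)\simeq\Omega^3K$ for every $k$. One must also verify that, under these identifications, the composite ${\ca M}_{K,k}\to\Omega^3K$ is homotopic to the geometric inclusion of framed charge-$k$ instantons into based framed connections on the four-sphere — the same compatibility that underlies the classical-group cases in \cite{BHMM1} — and I would cite \cite{Donaldson, Atiyah} for this rather than reprove it.

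Finally one fixes the constants: set $c=\min(1/2,c(LG/LG_+))$ and $c'=s+3$, the extra unit absorbing the floor via $[c\,k]\ge c\,k-1$. As the instanton charge is non-negative, $|k|=k$, so the range $j\le[\min(1/2,c(LG/LG_+))\,k]-s-2$ contains the range $j\le c|k|-c'$ claimed by the corollary. In fact $\pi_3(LG/LG_+)\cong\pi_4(K)$ is finite for every compact group $K$, so $s=0$ and one may even take $c'=3$.

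The corollary carries essentially no content of its own: all of the analysis, codimension estimates and spectral-sequence bookkeeping were done in proving the stabilisation theorem, and the two substitutions are by now routine. The only points that genuinely require attention — and thus the closest thing to an obstacle — are the bookkeeping ones flagged above: reconciling the degree conventions across $\Omega^2_k(LG/LG_+)\simeq\Omega^3K$, and confirming that the abstract homotopy equivalence is realised by the geometrically meaningful map, so that the statement really concerns the natural inclusion ${\ca M}_{K,k}\hookrightarrow\Omega^3K$, together with the harmless remark that $\pi_4(K)$ is finite.
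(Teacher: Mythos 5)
Your proposal is correct and follows the same route as the paper: the corollary is obtained by specialising the homotopy stabilisation theorem to $P_{LG}=LG^+$, substituting Donaldson's identification ${\ca M}_{K,k}\cong Hol_k^*(\bbp^1,LG/LG_+)$ for the source and $\Omega^2(LG/LG_+)\simeq\Omega^3K$ for the target, and absorbing the floor and the $\pi_3$-rank term into the constant $c'$. Your extra remarks on reconciling components and on the finiteness of $\pi_4(K)$ are sensible bookkeeping that the paper leaves implicit.
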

The space $\Omega^3K$, as noted by Atiyah and Jones \cite{atiyah-jones} is the space of all connections on the four sphere, modulo gauge. The statement is thus that the space of minima of the energy functional captures the topology as one increases the charge. As noted in the introduction, this result is proven for the classical groups \cite{BHMM2, Kirwan-AJ,Tian1,Tian2}; this gives it for all groups. The techniques of \cite{HM-ruled} should also apply to extend the result from the four sphere to ruled surfaces.
\bigskip

{\it v.2. Calorons.} Instead of   instantons, one can consider calorons.  As in \cite{GM, Charbonneau-Hurtubise,Nye-Singer}, $K$-calorons are \emph{anti-self-dual} $K$-connections on $\bbr^3\times S^1$, with asymptotic behaviour that resembles that of $\bbr^3$-monopole. We view $\bbr^3\times S^1$ as the quotient of the standard Euclidean $\bbr^4$ by the time translation $(t,x)\mapsto (t+2\pi/\mu_0,x)$. Let $A$ be  a $K$-connection on $\bbr^3\times S^1$; we write it in coordinates over $\bbr^4$ as  
\begin{equation} 
A = \phi dt+\sum_{i=1,2,3}A_idx_i. 
\end{equation} 
We require that the $L^2$ norm of the curvature of $A$ be finite, and that in suitable gauges, the $A_i$ be $O(|x|^{-2})$, and that $\phi$ be conjugate to $({\mathbf \mu}  - {\mathbf k}/2|x|^{-1} + O(|x|^{-2}))$, for constant elements of the Lie algebra ${\mathbf \mu},{\mathbf k}$. The element ${\mathbf k}$ lies in a cone in a lattice, and represents a \emph {monopole charge}. There are  also have bounds on the derivatives of these fields, which tell us in essence that in a suitable way the connection extends to the $2$-sphere at infinity times $S^1$; furthermore, one can show that the extension is to a fixed connection, which involves fixing a trivialisation at infinity; there is thus a second invariant we can define, the relative second Chern class, which we represent by a (positive) integer $k_0$, the \emph {instanton charge}. If this latter charge is zero, the connection is in fact simply a monopole, lifted from $\bbr^3$. 

One has, for $K= SU(2)$, the theorem (\cite{Charbonneau-Hurtubise}) :

\begin{theorem}
The moduli space ${\ca M}_{SU(2),  k_0, k_1}$ of framed $K$-calorons of instanton charge $k_0$ and monopole charge $k_1$  is isomorphic to the moduli space of maps $Hol_{k_0,k_1}^*(\bbp_1, LSL(2,\bbc)/B_{LSL(2,\bbc)})$, which in turn is the space of rank $2$ bundles of second Chern class $k_0$ on $\bbp^1\times \bbp^1$, trivialised on $\{\bbp^1\times \infty\}\cup\{\infty\times \bbp^1\}$, with a subline bundle of degree $-k_1$ on $\{\bbp^1\times 0\}$.
\end{theorem}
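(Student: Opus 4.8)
The plan is to prove the two asserted isomorphisms separately. The second one --- that $Hol^*_{k_0,k_1}(\bbp^1, LSL(2,\bbc)/B_{LSL(2,\bbc)})$ is the stated space of rank-two bundles-with-subline-bundle on $\bbp^1\times\bbp^1$ --- is a direct specialisation of what has already been set up, so I would dispose of it first; the first one --- that this space is the caloron moduli space $\ca M_{SU(2),k_0,k_1}$ --- is the analytically serious part, is the content of \cite{Charbonneau-Hurtubise}, and I would invoke it after checking that its algebraic output is exactly the loop-group picture.

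For the second isomorphism I would specialise the Atiyah-type correspondence and the $Sl(n,\bbc)$ flag-manifold discussion to $n=2$ and $P_{LG}=B_{LSL(2,\bbc)}$. An element of $Hol^*(\bbp^1, LSL(2,\bbc)/B_{LSL(2,\bbc)})$ is then the same datum as a rank-two holomorphic bundle $E$ on $\bbp^1\times\bbp^1$ (of trivial determinant, as befits $Sl(2,\bbc)$), trivialised over $\bbp^1\times\{\infty\}\cup\{\infty\}\times\bbp^1$, together with a one-dimensional subspace of each fibre along $\bbp^1\times\{0\}$, that is, a subline bundle $L\subset E|_{\bbp^1\times\{0\}}$. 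The bidegree of the map records the two invariants: by the charge formula \eqref{charges} with $n=2$ (and the normalisation of \cite{Charbonneau-Hurtubise}) the instanton charge is $k_0=c_2(E)$ and the monopole charge is the flag degree, $\deg L=-k_1$. Smoothness and the fine-moduli property come from the deformation results above (in particular Theorem~\ref{configurations} and the cohomology vanishing behind the smoothness statement), so no extra work is needed here.

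For the first isomorphism I would recall the Charbonneau--Hurtubise construction. View $\bbr^3\times S^1$ as the quotient of Euclidean $\bbr^4$ by the time translation; an anti-self-dual connection yields, by a Ward/twistor-type transform in a fixed direction, a holomorphic bundle whose natural spectral parameter is periodic and hence lives in $\bbc^*$, the two ends $z=0,\infty$ compactifying it to one $\bbp^1$. Finite $L^2$ energy together with the stated polynomial decay of the $A_i$ lets one extend the bundle across spatial infinity to produce the second $\bbp^1$, and the fixed framing at spatial infinity becomes the trivialisation over $\bbp^1\times\{\infty\}\cup\{\infty\}\times\bbp^1$; the relative second Chern class becomes $c_2(E)$. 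The key point is that the asymptotic form $\phi\sim\mu-\tfrac12\mathbf{k}|x|^{-1}+O(|x|^{-2})$ of the Higgs field, read together with the $S^1$-holonomy at infinity, cuts out along $z=0$ precisely a flag in the fibres --- here a subline bundle --- whose degree is governed by the monopole charge $\mathbf{k}$, and one identifies this with the degree $-k_1$ of the previous paragraph. The converse assignment is the usual one (solve the Hermitian--Einstein equation on the holomorphic bundle with prescribed behaviour along the compactifying divisors), and the two are mutually inverse.

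The main obstacle is this last step: the existence and uniqueness of the holomorphic structure, and above all the analytic control of the asymptotics needed to match the caloron's boundary data (framing, holonomy, monopole charge) with the framing and the degree-$(-k_1)$ subline bundle on $\bbp^1\times\bbp^1$. This is exactly what is established in \cite{Charbonneau-Hurtubise}, and I would not reprove it; the only thing left to verify on our side is that the holomorphic objects produced there coincide, on the nose, with those classified by $LSL(2,\bbc)/B_{LSL(2,\bbc)}$ in the sense of Atiyah's proposition and the $Sl(n,\bbc)$ discussion --- which is the bookkeeping of the second paragraph --- together with the harmless identification of the basing condition $F(\infty)=\bbi$ with the chosen framing and the reconciliation of the charge conventions through \eqref{charges}.
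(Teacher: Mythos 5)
Your proposal is correct and follows essentially the same route as the paper: the paper itself gives no proof of this theorem, importing the hard analytic correspondence wholesale from \cite{Charbonneau-Hurtubise}, while the identification of $Hol^*_{k_0,k_1}(\bbp^1, LSL(2,\bbc)/B_{LSL(2,\bbc)})$ with bundles-plus-subline-bundle is exactly the specialisation of the Atiyah-type propositions and the $Sl(n,\bbc)$ flag discussion that you describe. Your explicit bookkeeping of the charges via \eqref{charges} and the basing condition is consistent with the conventions used elsewhere in the text.
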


This theorem is quite likely true, with the necessary modifications, for all other compact groups $K$, though the techniques of \cite{Charbonneau-Hurtubise} are not likely to adapt well.  If we accept this as given, we then have:

\begin{corollary}(Atiyah-Jones conjecture for calorons.)
There are positive constants $c, c'$ such that the  inclusion ${\ca M}_{K,k_0,  {\mathbf k}}\rightarrow \Omega_{k_0,{\mathbf k}}^2(LG/P_{LG})$ induces isomorphisms in homotopy groups $\pi_j$ for
$j\le c|{\mathbf k}|-c'$.
\end{corollary}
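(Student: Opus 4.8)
The plan is to deduce the corollary by assembling two ingredients already in hand: the identification of caloron moduli spaces with spaces of based holomorphic maps into a flag manifold of the loop group, and the homotopy‑groups stabilisation theorem established above. First I would fix, for each compact semisimple $K$ and each pair $(k_0,{\mathbf k})$ of instanton and monopole charges, the parabolic $P_{LG}\subset LG$ recording the asymptotic symmetry‑breaking of the caloron (the Borel $B_{LG}$ for generic monopole charge, a larger parabolic otherwise) together with the corresponding map multi‑degree ${\mathbf d}={\mathbf d}(k_0,{\mathbf k})$. For $K=SU(2)$ this is precisely the Charbonneau--Hurtubise theorem quoted above; for general $K$ one invokes the generalisation flagged there as ``accepted as given'', which identifies ${\ca M}_{K,k_0,{\mathbf k}}$ with $Hol^*_{\mathbf d}(\bbp^1,LG/P_{LG})$ as a space, compatibly with the inclusion into the space of connections modulo gauge on $\bbr^3\times S^1$; under the same identification that ambient space becomes $\Omega^2_{k_0,{\mathbf k}}(LG/P_{LG})$, the target in the statement, playing here the role that $\Omega^2(LG/LG_+)\simeq\Omega^3K$ plays for instantons on $S^4$.

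With these identifications in place I would simply apply the homotopy stabilisation theorem above to $LG/P_{LG}$: the inclusion $I\colon Hol^*_{\mathbf d}(\bbp^1,LG/P_{LG})\to\Omega^2_{\mathbf d}(LG/P_{LG})$ induces isomorphisms on $\pi_j$ for $j\le[\min(1/2,c(LG/P_{LG}))\,\ell({\mathbf d})]-s-2$, with $s=\operatorname{rank}\pi_3(LG/P_{LG})$ and $c(LG/P_{LG})$ the codimension constant. It then remains to convert the bound from $\ell({\mathbf d})$ to the caloron charges. For this I would use that the charge dictionary expresses the components of ${\mathbf d}$ as non‑negative integer combinations of the components of ${\mathbf k}$ (and of $k_0$), exactly as in \eqref{charges}; hence $\ell({\mathbf d})$ is bounded below by a positive linear function of $|{\mathbf k}|$ along the chamber in which the charges are taken to infinity, and absorbing $s$, the constant $2$, and the $c'$ of the codimension estimate into one additive constant and the slope together with $\min(1/2,c(LG/P_{LG}))$ into one multiplicative constant yields isomorphisms on $\pi_j$ for $j\le c|{\mathbf k}|-c'$.

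The routine parts are the bookkeeping in that last step: tracking which linear combination of charges controls $\ell({\mathbf d})$ for a given $P_{LG}$, and checking that it grows linearly as the charges tend to infinity; should the controlling combination involve $k_0$ as well, the statement should be read with $|{\mathbf k}|$ replaced by the corresponding combined charge, but the argument is unaffected. The genuine obstacle lies entirely in the input identification of ${\ca M}_{K,k_0,{\mathbf k}}$ with $Hol^*_{\mathbf d}(\bbp^1,LG/P_{LG})$ for arbitrary compact $K$ — exactly the point conceded in the statement, since the $SU(2)$ argument of Charbonneau--Hurtubise rests on spectral‑curve and Nahm‑transform techniques that do not obviously extend; a proof in general would presumably need a different construction, for instance a Hitchin--Kobayashi‑type correspondence on $\bbr^3\times S^1$ or a degeneration argument from the $\bbp^1\times\bbp^1$ picture. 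Granting that identification, the corollary is a formal consequence of the stabilisation machinery already set up.
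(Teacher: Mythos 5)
Your proposal follows the paper's own route exactly: the paper likewise takes the Charbonneau--Hurtubise identification for $SU(2)$, explicitly concedes the general-$K$ identification as an assumption (``If we accept this as given''), and then reads the corollary off the homotopy stabilisation theorem for $Hol^*_{\mathbf k}(\bbp^1, LG/P_{LG})$. Your additional care about converting the $\ell({\mathbf k})$ bound into one in terms of the caloron charges is a reasonable piece of bookkeeping that the paper leaves implicit, but it does not change the argument.
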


\section{Outer automorphisms and Hecke transforms}

The elements of our loop group flag manifolds, when these flag manifolds correspond to flag manifolds on the finite dimensional group, yield bundles on $\bbp^1$, equipped with flags $F^0$ along $z=0$, as well as trivialisations on a neighbourhood of $z=\infty$. This trivialisation allows us also to choose a flag $F^\infty$ along $z=\infty$. 

At least for the classical groups,   this allows us to define Hecke transformations ${\mathcal H}$, giving us other bundles and flags; from the loop group point of view,  they are given by the action of the outer automorphisms. These, as we have seen, are classified by symmetries of the extended Dynkin diagram corresponding to the loop group. 

We consider each family of classical groups in turn:

\subsection{  Sl(n,\bbc).} Here, the outer automorphisms group is cyclic of order $n$, and generated by a single transformation defined as follows. Let $T$ be an element of $LSL(n,\bbc)/B_{LSL(n,\bbc)}$; it determines a  bundle $E$ on $\bbp^1$ equipped with flags 
$E^0_1\subset...\subset E^0_{n-1}$ over $z=0$, and a trivialisation over $|z|>1$, so that one  can choose in a fixed way a flag $E^\infty_1\subset...\subset E^\infty_{n-1}$ over $z=\infty$. We remember by (\ref{sl(n)-sheaves}) that  one not only has one sheaf, but a whole nested sequence $E^{i,j} = (E^0)^{i,j}$; likewise, the flag structure at infinity gives us in parallel $(E^\infty)^{i,j}$. Now define the Hecke transform ${\mathcal H}(E)$ by 
\begin{eqnarray*}
{\mathcal H}(E)  &=& E\quad {\rm away \ from\ }z=0,\infty,\cr
&& (E^0)^{0,n-1} \quad{\rm near\ } z=0,\cr
&& (E^\infty)^{1,1} \quad {\rm near \ }z=\infty.\cr
\end{eqnarray*}
In short, one allows a pole in one component at infinity, and forces one component to vanish at zero; the result still has degree zero, and indeed has an overall volume form. On the level of transition functions $T$, as we saw above, the Hecke transform operates by conjugating by the matrix $A(z^{-1})$, where $A(z)$ is the matrix (\ref{outer-sl}). The n-th power of the Hecke transform, up to inner automorphicms, is the identity.

This transform can of course be applied to families, in particular families parame\-trised by a Riemann surface. We saw that each such family $E$ had associated to it an $n$-tuple of degrees $K(E)= (k_1,...,k_n)$. One checks that the Hecke transform permutes the degrees cyclically by   $K({\mathcal H}(E))= (k_n, k_1,...,k_{n-1})$.

One   has an action permuting the components of $Hol(\Sigma, LSL(n,\bbc)/B_{LSL(n,\bbc)})$ of different degrees 
amongst themselves. 
In particular, for $\Sigma = \bbp^1$, the moduli spaces correspond to moduli of calorons, and the Hecke transform permutes these. 

There are several  special cases. The first is that of degree $(k,k,..,k) $, which  corresponds (see \ref{charges}) to instanton bundles, trivial (and with the trivial flag) along the line $z=0$ (this is, in fact, a dense subset of the instanton moduli); here the Hecke transform acts by automorphisms. These automorphisms have natural interpretations in terms of the monad representation of the bundles: indeed, the holomorphic bundles of rank $n$ with $c_2 = k$ corresponding to $SU(n)$-instantons are encoded in   four-tuples  of matrices $A, B, C,D$ with $A,B$  $k\times k$, $C$ $k\times n$, and  $D$ $n\times k$. These matrices satisfy the constraint $[A,B]+CD = 0$, as well as some non-degeneracy constraints, and the correspondence is bijective, up to the action of $Gl(k,\bbc)$ by $g(A,B,C,D) = (gAg^{-1}, gBg^{-1}, gC, Dg^{-1})$. The additional constraint of triviality along $z=0$ tells us that $A$ is invertible. Writing the columns of $C$ as $C_i$, and the rows of $D$ as $D_i$, $i=1,...,n$, the Hecke transform acts by 
$$(A,B,C_1,..,C_n, D_1,..,D_n)\mapsto (A, B-C_1D_1A^{-1}, C_2,..,C_n,A C_1, D_2,..,D_n,D_1A^{-1}).$$
If one iterates this $n$ times, this gives $(A, (BA-CD)A^{-1}= ABA^{-1}, AC, DA^{-1}) = A(A,B,C,D)$, bringing us back to the initial point.

Another  special case is  obtained by setting  the last degree to zero: one has  degree $(k_1,..,k_{n-1}, 0)$; these are trivial bundles with  flags of subbundles, and correspond  to monopoles. The Hecke transform permutes  these  spaces amongst themselves or with caloron spaces .

Similarly, replacing Borels by parabolics, one has actions of the Hecke transform mapping the moduli space for one parabolic to the moduli space for another.

These Hecke transforms on calorons have natural realisations in terms of Nahm transfoms of the calorons,  or monopoles. We recall that the Nahm transform attaches to each element of the moduli space of calorons of degree $(k_1,..,k_n)$ a solution to Nahm's equations on the circle (\cite{Nye-Singer}). The solution involves partitioning the circle into $n$ consecutive intervals $(\mu_i,\mu_{i+1})$ with $\mu_n$ and $\mu_1$ representing the same point. The $\mu_i$ represent appropriate physical constants associated to the problem (``masses"). On each interval, one has solutions to some non-linear o.d.e., Nahm's equations, given by $k_i\times k_i$ matrices, and at the boundary points, some boundary conditions. The Hecke transform simply amounts in this case to rotating the order of the intervals. We note that this also permutes  the masses, and could conceivably correspond to some form of physical duality, or rather, $n$-ality.

\appendix


\end{document}